\newlength{\tabwidth}
\newlength{\tabheight}
\newlength{\tabrule}
\newlength{\tabwidthx}
\newlength{\tabheightx}
\def\gentabbox#1#2#3#4{\vbox to \tabheight{\setlength{\tabrule}{#3}%
  \setlength{\tabwidthx}{#1\tabwidth}\addtolength{\tabwidthx}{\tabrule}%
  \setlength{\tabheightx}{#2\tabheight}\addtolength{\tabheightx}{-\tabheight}%
  \hbox to #1\tabwidth{%
    \hspace{-0.5\tabrule}\rule{\tabrule}{#2\tabheight}\hspace{-\tabrule}%
    \vbox to #2\tabheight{\hsize=\tabwidthx%
      \vspace{-0.5\tabrule}\hrule width\tabwidthx height\tabrule%
      \vspace{-0.5\tabrule}\vss%
      \hbox to \tabwidthx{\hss#4\hss}%
        \vss\vspace{-0.5\tabrule}%
      \hrule width\tabwidthx height\tabrule\vspace{-0.5\tabrule}}%
    \hspace{-\tabrule}\rule{\tabrule}{#2\tabheight}\hspace{-0.5\tabrule}}%
  \vspace{-\tabheightx}}}
\def\genblankbox#1#2{\vbox to \tabheight{\vfil\hbox to #1\tabwidth{\hfil}}}
\def\tabbox#1#2#3{\gentabbox{#1}{#2}{0.4pt}{\strut #3}}
\newtheorem{theorem}{Theorem}[section]
\newtheorem{lemma}[theorem]{Lemma}
\newtheorem{prop}[theorem]{Proposition}
\newtheorem{cor}[theorem]{Corollary}
\theoremstyle{definition}
\newtheorem{definition}[theorem]{Definition}
\theoremstyle{remark}
\newtheorem{remark}[theorem]{Remark}
\newcommand{\U}{\dot{\mathbf U}}
\newcommand{\Ud}{\dot{\mathbf U}}
\newcommand{\ADA}{\mathfrak A_{D,\mathcal A}}
\newcommand{\tX}{\tilde{\mathcal X}_{\mathbf a,\nu}^\mathbf L}
\numberwithin{equation}{section}
\begin{document}

\title{On the geometric realization of the inner product and canonical basis for quantum affine $\mathfrak{sl}_n$}

\author{Kevin McGerty}
\address{Department of Mathematics, Imperial College London.}

\date{August 2010.}

\begin{abstract}
We give a geometric interpretation of the inner product on the modified quantum group of $\widehat{\mathfrak{sl}}_n$. We also give some applications of this interpretation, including a positivity result for the inner product, and a new geometric construction of the canonical basis.
\end{abstract}

\maketitle

\section{Introduction}
Let $\mathbf U$ be a quantized universal enveloping algebra.
The positive part $\mathbf U^+$ of $\mathbf U$ is well known to possess a canonical basis
\cite{K91}, \cite{L91}. In contrast, there is no particularly natural basis for the algebra $\mathbf U$ itself.  However Lusztig \cite{L92} has defined a variant of the quantized enveloping algebra known as the modified quantum group. This algebra has essentially the same representation theory, and can be given a canonical basis $\dot{\mathbf B}$ which packages together natural bases of the tensor product of a highest and lowest weight $\mathbf U$-module (when such modules exist), in the same way that the canonical basis $\mathbf B$ of $\mathbf U^+$ packages together natural bases of highest weight representations. Just as for $\mathbf B$, (c.f. \cite{GL93}, \cite{K91}) it is possible  to characterize this basis, up to sign, in terms of an involution and an inner product.

In \cite{BLM90} the quantized enveloping algebra of $\mathfrak{gl}_n$ was constructed geometrically as a limit of certain convolution algebras. Subsequently Lusztig \cite[Part 4, Notes 1]{L93}, \cite{L99}, and independently Ginzburg and Vasserot \cite{GV93}, observed that this construction could be extended to the case of quantum affine $\mathfrak{sl}_n$. More precisely, one can define a sequence of algebra $\mathfrak A_D$, and maps $\psi_D\colon \mathfrak A_D \to \mathfrak A_{D-n}$, along with compatible maps $\phi_D$ from the modified quantum group $\Ud(\widehat{\mathfrak{sl}}_n)$ such that the resulting map into the inverse limit of the system $(\mathfrak A_D,\psi_D)_{D \in \mathbb N}$ is injective (in fact we will give a new proof of this injectivity statement in Section \ref{injective}). One of the main results of the present paper is a construction of the inner product on the modified quantum group $\dot{\mathbf U}(\widehat{\mathfrak{sl}}_n)$ as a kind of limit of a natural family of inner products on the algebras $(\mathfrak A_D)_{D \in \mathbb N}$. We do this both in the context of function on $\mathbb F_q$-rational points and perverse sheaves, giving two proofs of the fact that our construction yields the inner product on the modified quantum group -- the first is elementary, using explicit formulae for multiplication in $\mathfrak A_{D,n,n}$, while the second, although requiring more machinery, gives a more conceptual explanation in terms of equivariant cohomology. As applications of these results we give a new construction of the canonical basis of $\Ud$ in this context (which was already known by \cite{ScV} using crystal basis techniques), and a prove a positivity property for the inner product of elements of $\dot{\mathbf B}$ which is conjectured to hold for arbitrary types.

\section{Background}
\label{background}
We begin by recalling the setup of \cite{L99}. Fix a positive integer $n$. Let $D$ be a positive integer, $\epsilon$ an indeterminate, $\mathbf k$ a finite field with $q$ elements and $v$ a square root of $q$. Given $V$ a free $\mathbf k[\epsilon,\epsilon^{-1}]$-module of rank $D$, a \textit{lattice} in $V$ is a free
$\mathbf k[\epsilon]$-submodule of $V$, of rank $D$. Let $\mathcal F^n$ denote the set of \textit{n-step periodic lattices} in $V$, that is, $\mathcal F^n$ consists of sequences of lattices $\mathbf
L = (L_i)_{i \in \mathbb Z}$ where $L_{i-1} \subset L_i$, and $L_{i-n}=\epsilon L_i$ for all $i \in \mathbb Z$. We will also write $\mathcal F^n_D$ if we wish to emphasize the rank of $V$. 
Throughout this paper, if $X$ is a finite set, we will write $|X|$ for the cardinality of $X$.

The group $G = \text{Aut}(V)$ of automorphisms of $V$ acts on $\mathcal F^n$ in the natural way. We shall be interested in functions supported on $\mathcal F^n$ and its square which are invariant with respect to the action of $G$ (where $G$ acts diagonally on $\mathcal F^n \times \mathcal F^n$). Thus we first
describe the orbits of $G$ on these spaces. Let $\mathfrak S_{D,n}$ be the finite set of all $\mathbf a=(a_i)_{i \in \mathbb Z}$ such that

\begin{itemize}
\item $a_i \in \mathbb N$;
\item $a_i=a_{i+n}$ for all $i \in \mathbb Z$;
\item for all $i\in \mathbb Z$, $a_i+a_{i+1}+\dotsb+a_{i+n-1}=D$.
\end{itemize}
For $\mathbf L \in \mathcal F^n$, let $|\mathbf L| \in \mathfrak S_{D,n}$ be given by $|\mathbf L|_i=\mathrm{dim}(L_i/L_{i-1})$. The $G$-orbits on $\mathcal F^n$ are indexed by this graded dimension: for $\mathbf a \in\mathfrak S_{D,n}$ set $\mathcal F_{\mathbf a}=\{\mathbf L \in \mathcal F^n\colon |\mathbf L|=\mathbf a\}$; then the $\mathcal F_{\mathbf a}$ are precisely the $G$-orbits on $\mathcal F^n$. The $G$ orbit on $\mathcal F^n \times \mathcal F^n$ are indexed, slightly more elaborately, by the set of matrices $\mathfrak S_{D,n,n}$, where $A=(a_{i,j})_{i,j \in \mathbb Z}$, is in $\mathfrak S_{D,n,n}$ if
\begin{itemize}
\item $a_{i,j} \in \mathbb N$; \item $a_{i,j}=a_{i+n,j+n}$ for all
$i,j \in \mathbb Z$; \item for any $i \in \mathbb Z$,
$a_{i,*}+a_{i+1,*}+\dotsb+a_{i+n-1,*}=D$; \item for any $j \in
\mathbb Z$, $a_{*,j}+a_{*,j+1}+\dotsb+a_{*,j+n-1}=D$.
\end{itemize}
Here
\[
a_{i,*}=\sum_{j\in \mathbb Z}a_{i,j}; \qquad a_{*,j}=\sum_{i\in
\mathbb Z}a_{i,j}.
\]
For $A\in \mathfrak S_{D,n,n}$ set
\[
r(A)=(a_{i,*})_{i\in \mathbb Z}\in \mathfrak S_{D,n} \qquad
c(A)=(a_{*,j})_{j \in \mathbb Z} \in \mathfrak S_{D,n}.
\]
For $A \in \mathfrak S_{D,n,n}$ the corresponding $G$-orbit $\mathcal O_A$ consists of pairs $(\mathbf L,\mathbf{L'})$ such that
\[
a_{i,j}=\mathrm{dim}\biggl(\frac{L_i\cap L_j'}{(L_{i-1}\cap
L_j')+(L_i\cap L_{j-1}')}\biggr),
\]
thus $\mathbf L \in \mathcal F_{r(A)}$ and $\mathbf{L'} \in \mathcal F_{c(A)}$.

Let $\mathfrak A_{D;q}$ be the space of integer-valued $G$-invariant functions on $\mathcal F^n \times \mathcal F^n$ supported on a finite number of orbits. If $e_A$ denotes the characteristic function of an
orbit $\mathcal O_A$, the set $\{e_A\colon A \in \mathfrak S_{D,n,n}\}$ is a basis of $\mathfrak A_{D;q}$. The space $\mathfrak A_{D;q}$ has a natural convolution product which gives it the structure of
an associative algebra. With respect to the basis of characteristic functions the structure constants are given as follows. For $A, B, C \in \mathfrak S_{D,n,n}$, let $\eta_{A,B;q}^C$ be the coefficient of $e_C$ in the product $e_Ae_B$. Then $\eta_{A,B;q}^C$ is zero unless $c(A)=r(B)$, $r(A)=r(C)$ and $c(B)=c(C)$.  Now suppose these conditions are satisfied and fix $(\mathbf L, \mathbf L'') \in \mathcal O_{C}$. Then $\eta_{A,B;q}^C$ is the number of points in the set 
\[
\{ \mathbf L' \in \mathcal F_{c(A))}\colon (\mathbf L, \mathbf L') \in \mathcal O_A, (\mathbf L',\mathbf L'') \in \mathcal O_B\}.
\]

Clearly this is independent of the choice of $(\mathbf L,\mathbf L'')$, and moreover it can be shown that these structure constants are polynomial in $q$, allowing us to construct an algebra $\mathfrak A_{D,\mathbb Z[t]}$ over $\mathbb Z[t]$, which is a free $\mathbb Z[t]$-module on a basis $\{e_A: A \in \mathfrak S_{D,n,n}\}$ such that $\mathfrak A_{D,\mathbb Z[t]|t=q} = \mathfrak A_{D,q}$. In fact we will use the algebra $\mathfrak A_{D,\mathcal A}$ which is obtained from $\mathfrak A_{D,\mathbb Z[t]}$ by extending scalars to $\mathcal A = \mathbb Z[v,v^{-1}]$ where $v^2=t$ and the $\mathbb Q(v)$-algebra $\mathfrak A_D$ obtained by extending scalars to $\mathbb Q(v)$ (we will, by deliberate misuse, treat $v$ as both an indeterminate and a square root of $q$, depending on the context). The algebra $\mathfrak A_D$ is sometimes known as the affine q-Schur
algebra. In what follows it will be more convenient to use a rescaled version of the basis $\{e_A\}_{A\in \mathfrak S_{D,n,n}}$
of $\ADA$, with elements $[A]=v^{-d_A}e_A$ where
\[
d_A = \sum_{i \geq k, j<l, 1\leq i \leq n}a_{ij}a_{kl}.
\]
Note that if we define $\Psi([A])=[A^t]$, where $(A^t)_{ij} = a_{ji}$, then it is straightforward to check that $\Psi$ is an algebra anti-automorphism (see \cite[Lemma 1.11]{L99} for details), which we will sometimes call the transpose anti-automorphism.

Next we introduce quantum groups. In order to do this we recall the notion of a root datum from \cite{L93}.

\begin{definition}
A \textit{Cartan datum} is a pair $(I,\cdot)$ consisting of a finite set $I$ and a $\mathbb Z$-valued symmetric bilinear pairing on the free Abelian group $\mathbb Z[I]$, such that
\begin{itemize}
\item $i\cdot i \in \{2,4,6,\ldots\}$
\item $2\frac{i\cdot j}{i\cdot i} \in \{0,-1,-2,\ldots\}$, for $i\neq j$.
\end{itemize}
A \textit{root datum} of type $(I,\cdot)$ is a pair $Y,X$ of finitely-generated free Abelian groups and a perfect pairing $\langle,\rangle \colon Y \times X \to \mathbb Z$, together with embeddings $I\subset X$, ($i\mapsto i$) and $I\subset Y$, ($i\mapsto i'$) such that $\langle i',j \rangle = 2\frac{i\cdot j}{i
\cdot i}$.
\end{definition}

Given a root datum, we may define an associated quantum group $\mathbf U$.
Since it is the only case we need, we will assume that our datum is symmetric
and simply laced so that $i\cdot i = 2$ for each $i \in I$, and $i\cdot j \in \{0,-1\}$
if $i \neq j$. In this case, $\mathbf U$ is generated as an algebra over $\mathbb Q(v)$
by symbols $E_i, F_i, K_\mu$, $i \in I$, $\mu \in Y$, subject to the following relations.
\begin{enumerate}
\item $K_0=1$, $K_{\mu_1}K_{\mu_2} = K_{\mu_1+\mu_2}$ for $\mu_1,\mu_2 \in Y$;
\item $K_{\mu} E_i K_{\mu}^{-1} = v^{\langle\mu,i'\rangle}E_i, \quad K_{\mu} F_i K_{\mu}^{-1} =
 v^{-\langle\mu,i'\rangle}F_i$ for all $i \in I$, $\mu \in Y$;
\item $E_iF_j - F_jE_i = \delta_{i,j}\frac{K_i-K_i^{-1}}{v-v^{-1}}$;
\item $E_iE_j=E_jE_i, \quad F_iF_j=F_jF_i$, for $i,j \in I$ with $i\cdot j =0$;
\item $E_i^2E_j +(v+v^{-1})E_iE_jE_i + E_jE_i^2 =0$ for $i,j \in I$ with $i\cdot j = -1$;
\item $F_i^2F_j +(v+v^{-1})F_iF_jF_i + F_jF_i^2 =0$ for $i,j \in I$ with $i\cdot j = -1$.
\end{enumerate}
Thus $\mathbf U$ is naturally $X$-graded, $\mathbf U = \bigoplus_{\nu \in X} \mathbf U_\nu$. The subalgebras $\mathbf U^+$ and $\mathbf U^-$ generated by the $E_i$s and $F_i$s respectively are isomorphic to each other, and indeed are isomorphic to the $\mathbb Q(v)$-algebra $\mathbf f$ generated by symbols $\{\theta_i: i \in I\}$ subject only to the relation
\[
\begin{split}
\theta_i\theta_j  - \theta_j\theta_i=0, & \text{ for } i, j \in I, \text{ with } i\cdot j =0;\\
\theta_i^2\theta_j +(v+v^{-1})\theta_i\theta_j\theta_i + \theta_j\theta_i^2 =0, &  \text{ for } i,j \in I \text{ with  } i\cdot j = -1.
\end{split}
\]
Note that the algebra $\mathbf f$ depends only on the Cartan datum. 

We also need to consider the modified quantum group $\dot{\mathbf U}$. This is defined by
\[
\U = \bigoplus_{\lambda \in X} \mathbf U 1_\lambda; \qquad \mathbf U 1_\lambda =
\mathbf U / \sum_{\mu \in Y} \mathbf U(K_\mu - v^{\langle \mu, \lambda \rangle}),
\]
where the multiplicative structure is given in the natural way by the formulae:
\[
1_\lambda x = x 1_{\lambda - \nu}, \quad x \in \mathbf U_\nu; \qquad 1_\lambda 1_{\lambda'} =
\delta_{\lambda, \lambda'}1_\lambda.
\]
Let $\Ud_\mathcal A$ be the $\mathcal A$-subalgebra of $\Ud$ generated by $\{E_i^{(n)}1_\lambda, F_i^{(n)}1_\lambda: n \in \mathbb Z_{\geq 0}, \lambda \in X\}$. It is known \cite{L93} that $\Ud_\mathcal A$ is an $\mathcal A$-form of $\Ud$, and moreover it is a free $\mathcal A$-module.

To describe the connection between our convolution algebra and quantum groups, we will need the following notation. For $\mathbf a \in \mathfrak S_{D,n}$ let $\mathbf{i_a} \in \mathfrak S_{D,n,n}$ be the diagonal matrix with $(\mathbf{i_a})_{i,j}=\delta_{i,j}a_i$. Let $E^{i,j}\in \mathfrak S_{1,n,n}$ be the matrix with $(E^{i,j})_{k,l}=1$ if $k=i+sn$, $l=j+sn$, some $s \in \mathbb Z$, and $0$ otherwise. Let $\mathfrak S^n$ be the set of all $\mathbf b=(b_i)_{i\in \mathbf Z}$ such that $b_i=b_{i+n}$ for all $i \in \mathbb Z$. Let $\mathfrak S^{n,n}$ denote the set of all matrices $A= (a_{i,j})$, $i,j \in \mathbb Z$, with entries in $\mathbb Z$ such that

\begin{itemize}
\item $a_{i,j} \geq 0$ for all $i \neq j$; \item
$a_{i,j}=a_{i+n,j+n}$, for all $i,j \in \mathbb Z$; \item For any
$i \in \mathbb Z$ the set $\{j \in \mathbb Z\colon a_{i,j}\neq 0\}$ is
finite; \item For any $j \in \mathbb Z$ the set $\{i \in \mathbb
Z\colon a_{i,j}\neq 0\}$ is finite.
\end{itemize}
\noindent
Thus we have $\mathfrak S_{D,n,n} \subset \mathfrak S^{n,n}$ for all $D$. For $i \in \mathbb Z/n\mathbb Z$, let $\mathbf i \in \mathfrak S^n$ be given by $\mathbf i_k=1$ if $k=i$ mod $n$, $\mathbf i_k=-1$ if $k=i+1$ mod $n$, and $\mathbf i_k=0$ otherwise. We write $\mathbf a \cup_i \mathbf{a'}$ if $\mathbf a=\mathbf{a'}+\mathbf i$. For such $\mathbf a,\mathbf{a'}$ set $_\mathbf a\mathbf e_\mathbf{a'} \in \mathfrak S^{n,n}$ to be $\mathbf{i_a}-E^{i,i}+E^{i,i+1}$, and $_\mathbf{a'}\mathbf f_\mathbf a \in \mathfrak S^{n,n}$ to be $\mathbf{i_{a'}}-E^{i+1,i+1}+E^{i+1,i}$. Note if $\mathbf a, \mathbf{a'} \in \mathfrak S_{D,n}$ then $_\mathbf a\mathbf e_\mathbf{a'}, _\mathbf{a'}\mathbf f_\mathbf a \in \mathfrak
S_{D,n,n}$. For $i \in \mathbb Z/n\mathbb Z$ set
\[
E_i(D)= \sum[_\mathbf a\mathbf e_{\mathbf{a'}}], \qquad  F_i(D)=
\sum[_\mathbf{a'}\mathbf f_{\mathbf a}],
\]
where the sum is taken over all $\mathbf a,\mathbf{a'}$ in $\mathfrak S_{D,n}$ such that $\mathbf a\cup_i\mathbf{a'}$. For $\mathbf a \in \mathfrak S^n$ set
\[
K_\mathbf a(D) =\sum_{\mathbf b \in \mathfrak S_{D,n}}v^{\mathbf
a\cdot \mathbf b}[\mathbf{i_b}]
\]
where, for any $\mathbf a, \mathbf b \in \mathfrak S^n$, $\mathbf a\cdot \mathbf b=\sum_{i=1}^n a_ib_i \in \mathbb Z$. If we let $X=Y=\mathfrak S^n$, and $I= \mathbb Z/ n\mathbb Z$, with the embedding of $I \subset X=Y$ given by $i \mapsto \mathbf i$ and pairing as given above, we obtain a symmetric simply-laced root datum. We call the quantum group associated to it $\mathbf U(\widehat{\mathfrak{gl}}_n)$ (in fact this is the degenerate, or level zero, form of the affine quantum group associated to $\widehat{\mathfrak{gl}}_n$). It can be shown \cite{L99} that the elements $E_i(D), F_i(D), K_{\mathbf a}(D)$, generate a subalgebra $\mathbf U_D$ which is a quotient of the quantum group $\mathbf U(\widehat{\mathfrak{gl}}_n)$, via map the notation suggests. Since $E_i(D),F_i(D),K_\mathbf a((D)$ all lie in $\mathfrak A_{D,\mathcal A}$ we similarly have an $\mathcal A$-subalgebra $\mathbf U_{D,\mathcal A}$. Note that this gives the algebra $\mathfrak A_D$ the structure of a $\mathbf U(\widehat{\mathfrak{gl}}_n)$-module.

Similarly, letting $\mathbf b_0 = (\ldots, 1,1,\ldots)\in \mathfrak S^n$, we have a root datum given by $X' = \mathfrak S^n/\mathbb Z\mathbf b_0$, $Y' = \{\mathbf a \in \mathfrak S^n: \mathbf a \cdot \mathbf b_0 = 0\}$ with embeddings $I \subset X', Y'$ induced by the above embeddings into $X$ and $Y$.  This new root datum is associated to (again the degenerate form of) the quantum group $\mathbf U(\widehat{\mathfrak{sl}}_n)$. We have an algebra map $\phi_D \colon \Ud(\widehat{\mathfrak{sl}}_n) \to \mathfrak A_D$ given by $E_i^{(n)}1_\lambda \mapsto E_i(D)^{(n)}[\mathbf{i_a}]$ where $\mathbf a \in \mathfrak S_{D,n}$ satisfies $\mathbf a \equiv \lambda  \text{ mod } \mathbb Z\mathbf b_0$ if such an $\mathbf a$ exists, and $E_i^{(n)}1_\lambda \mapsto 0$ otherwise, and similarly for the $F_i^{(n)}1_\lambda$. Clearly $\phi_D$ restricts to a map between the integral forms $\Ud(\widehat{\mathfrak{sl}}_n)_\mathcal A$ and $\ADA$. It can be readily checked, using a Vandermonde determinant argument, that the image of $\phi_D$ is exactly $\mathbf U_D$ (see \cite[Lemma 2.8]{L99}).

\section{Inner product on $\mathbf U_D$} \label{innerfn}
\begin{definition} We define a bilinear form
\[
(,)_D\colon \mathfrak A_{D;q} \times \mathfrak A_{D;q} \to \bar{\mathbb
Q_{l}}
\]
by
\begin{equation}
\label{ipdefinition}
(f,\tilde{f})_D = \sum_{\mathbf L,\mathbf{L'}}v^{\sum|\mathbf L|_{i}^{2}-\sum|\mathbf{L'}|_{i}^{2}}f(\mathbf L,\mathbf{L'})\tilde{f}(\mathbf L,\mathbf{L'}),
\end{equation}
for $f$ and $\tilde{f}$ in $\mathfrak{A}_{D,q}$, where $\mathbf L$
runs over $\mathcal{F}^n$ and $\mathbf{L'}$ runs over a set of
representatives for the $G$-orbits on $\mathcal{F}^n$.
\end{definition}

Let $\mathcal{O}_A$ be a G-orbit on $\mathcal{F}^n \times
\mathcal{F}^n$, and let
\[
 X_{A}^{\mathbf L} = \{\mathbf{L'} \in \mathcal F^n\colon (\mathbf L,\mathbf{L'}) \in \mathcal O_A\}.
\]
It is easy to check that
\begin{equation}
2d_A-2d_{A^t}=\sum_{i=1}^n a_{i,*}^2-\sum_{j=1}^n a_{*,j}^2.
\label{difference}
\end{equation}
Thus if $A,A'$ are in $\mathfrak S_{D,n,n}$ we find that
\[
(e_{A},e_{A'})_D = \delta_{A,A'}q^{d_A -
d_{A^t}}|X_{A^t}^{\mathbf{L'}}|,
\]
where $\mathbf{L'}$ is any lattice in $\mathcal F_{c(A)}$. Thus the basis $\{e_A: A \in \mathfrak S_{D,n,n}\}$ is orthogonal for our inner product, and hence $(,)$ is nondegenerate. If $\{\eta_{A,B;q}^{C}\}$ are the structure constants of $\mathfrak{A}_{D;q}$ with respect to the basis $\{[A]\colon A \in \mathfrak S_{D,n,n}\}$, then we have
\begin{equation}
([A],[A'])_D= \delta_{A,A'}v^{- d_{A^t}}\eta_{A^t,A;q}^{\mathbf
i_{c(A)}}.
\end{equation}
We therefore obtain an inner product on $\ADA$ taking values in $\mathcal A$ by defining
\begin{equation}
([A],[A'])_D= \delta_{A,A'}v^{- d_{A^t}}\eta_{A^t,A}^{\mathbf
i_{c(A)}} \in \mathbb Z[v,v^{-1}]
\end{equation}
By extending scalars, we obtain a $\mathbb Q(v)$-valued symmetric bilinear form on $\mathfrak A_D$.
We now give some basic properties of this inner product:

\begin{prop}
Let $A \in \mathfrak S_{D,n}$, and let $f, \tilde f \in \mathfrak
A_D$. Then we have
\[
([A]f, \tilde{f})_D = v^{d_A-d_{A^t}}(f,[A^t]\tilde f)
\]
\label{property}
\end{prop}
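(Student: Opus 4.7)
The plan is to verify the identity at the level of functions on $\mathcal F^n \times \mathcal F^n$ over $\mathbb F_q$, and then to deduce the statement over $\mathfrak A_D$ by polynomial interpolation in $v$. Expanded in the basis $\{[B]: B \in \mathfrak S_{D,n,n}\}$, both sides of the claim become $\mathbb Z[v, v^{-1}]$-linear combinations of polynomial structure constants and orbit cardinalities; as these are polynomial in $v$, the identity holds over $\mathcal A$ (hence over $\mathfrak A_D$) provided it holds for every specialization $v^2 = q$ with $q$ a prime power. So I may work with the function-level inner product on $\mathfrak A_{D;q}$.

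Passing first to the unscaled basis via $[A] = v^{-d_A}e_A$, the claim reduces to
\[
(e_A * g, h)_D = q^{d_A - d_{A^t}}(g, e_{A^t} * h)_D
\]
for arbitrary $g, h \in \mathfrak A_{D;q}$. To prove this I will expand both sides as triple sums over $(\mathbf L_1, \mathbf L_2, \mathbf L_3)$, where $\mathbf L_1, \mathbf L_2$ range over $\mathcal F^n$ and $\mathbf L_3$ over a fixed set of $G$-orbit representatives. On the support of $e_A(\mathbf L_1, \mathbf L_2)$ one has $|\mathbf L_1| = r(A)$ and $|\mathbf L_2| = c(A)$, so by the identity (\ref{difference}) the weight factor splits as
\[
v^{\sum|\mathbf L_1|_i^2 - \sum|\mathbf L_3|_i^2} = v^{2d_A - 2d_{A^t}} \cdot v^{\sum|\mathbf L_2|_i^2 - \sum|\mathbf L_3|_i^2}.
\]
Using the tautology $e_A(\mathbf L_1, \mathbf L_2) = e_{A^t}(\mathbf L_2, \mathbf L_1)$ and the definition of convolution, the residual triple sum is precisely the expansion of $(g, e_{A^t} * h)_D$.

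Reverting the rescaling via $e_A = v^{d_A}[A]$ and $e_{A^t} = v^{d_{A^t}}[A^t]$ then yields the claimed identity $([A]g, h)_D = v^{d_A - d_{A^t}}(g, [A^t]h)_D$. The substantive ingredient is (\ref{difference}), which governs the splitting of the exponent. The one point requiring attention is the asymmetric treatment of the two arguments in the definition of $(,)_D$ (the first variable free, the second over representatives), but this poses no real obstacle: after the rearrangement above, $\mathbf L_1$ and $\mathbf L_2$ both enter as free variables while $\mathbf L_3$ retains its role as the orbit representative, matching the definition of $(g, e_{A^t} * h)_D$ on the nose.
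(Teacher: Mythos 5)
Your argument is correct and is essentially the same as the paper's: both proofs reduce to the function level over $\mathbb F_q$, expand the inner product as a sum over configurations of lattices $(\mathbf L_1,\mathbf L_2,\mathbf L_3)$, and invoke Equation~(\ref{difference}) to account for the factor $v^{2(d_A-d_{A^t})}$. The only stylistic difference is that the paper reduces to $f=e_B$, $\tilde f=e_C$ and verifies the exponent identity $\alpha=\beta$ by hand, whereas you keep $g,h$ general and observe the weight exponent splits uniformly on the support of $e_A$, which is a minor streamlining of the same computation.
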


\begin{proof}
Clearly it suffices to establish this equation in the algebra $\mathfrak A_{D;q}$. Since the characteristic functions of G-orbits form a basis of $\mathfrak A_{D;q}$, we may assume that $f = e_B$ and $\tilde f = e_C$, moreover we may assume that 
\begin{equation}
r(A)=r(C), \quad c(A)=r(B), \quad c(B)=c(C). \label{assumption}
\end{equation}
as both sides are zero otherwise. It follows immediately that
\[
[A]\cdot e_B=v^{-d_A}e_A\cdot e_B , \qquad
v^{d_A-d_{A^t}}[A^t]\cdot e_C=v^{d_A-2d_{A^t}}e_{A^t}\cdot e_C.
\]
Hence if $(\tilde{\mathbf L},\mathbf {L'}) \in \mathcal O_C$ is fixed,
\begin{equation}
\begin{split}
([A]\cdot e_B,e_C)_D &=
q^{d_C-d_{C^t}}|X_{C^t}^{\mathbf{L'}}|\cdot v^{-d_A}
    |\{\mathbf{L''}\colon(\tilde{\mathbf L},\mathbf{L''}) \in \mathcal O_A,(\mathbf{L''},
    \mathbf{L'}) \in \mathcal O_B\}| \\
&=  v^\alpha|\{\mathbf L,\mathbf{L''}\colon(\mathbf L,\mathbf{L''})
\in \mathcal O_A, (\mathbf{L''},\mathbf{L'}) \in \mathcal O_B,
(\mathbf L,\mathbf{L'}) \in \mathcal O_C\}|,
\label{equationthefirst}
\end{split}
\end{equation}
where $\alpha = 2d_C-2d_{C^t}-d_A$. Similarly, if $(\tilde{\mathbf L}'',\mathbf{L'}) \in \mathcal O_B$ is fixed
\begin{equation}
\begin{split}
v^{d_A-d_{A^t}}(e_B,[A^t]\cdot e_C)_D
&=  q^{d_B-d_{B^t}}|X_{B^t}^{\mathbf{L'}}|\cdot v^{d_A-2d_{A^t}}|\{\mathbf{L}\colon(\tilde{\mathbf L}'',\mathbf{L}) \in \mathcal O_{A^t},(\mathbf{L},\mathbf{L'}) \in \mathcal O_C\}| \\
&=  v^\beta|\{\mathbf L, \mathbf{L''}\colon(\mathbf{L''},\mathbf{L})
\in \mathcal O_{A^t}, (\mathbf{L''},\mathbf{L'}) \in \mathcal O_B,
(\mathbf L,\mathbf{L'}) \in \mathcal O_C\}|,
\label{equationthesecond}
\end{split}
\end{equation}
where $\beta=2d_B-2d_{B^t}+d_A-2d_{A^t}$. But now considering the diagrams:

\xymatrix{ & & & & \mathbf L \ar[r]^A \ar[dr]_C & \mathbf{L''}
\ar[d]^B  & &
\mathbf L \ar[dr]_C & \mathbf{L''} \ar[l]_{A^t} \ar[d]^B\\
& & & & &\mathbf{L'} & & & \mathbf{L'} }

it is clear that the last line of equation (\ref{equationthefirst}) is the same as the last line of equation (\ref{equationthesecond}) if $\alpha=\beta$, that is, if 
\begin{equation}
2d_C-2d_{C^t}-d_A = 2d_B-2d_{B^t}+d_A-2d_{A^t}   \label{easy}
\end{equation}
But this follows directly from equation (\ref{difference}) and equation (\ref{assumption}).
\end{proof}

We have the following easy consequence:
\begin{cor}
Let $i \in \mathbb Z$, and let $f, \tilde f \in \mathfrak A_D$ and
$\mathbf c \in \mathfrak S^n$. Then we have
\begin{enumerate}
\item $(E_i(f), \tilde{f})_D = (f,vK_{\mathbf i}F_i(\tilde f))_D$
\item $(F_i(f), \tilde{f})_D = (f,vK_{-\mathbf i}E_i(\tilde f))_D$
\item $(K_{\mathbf c}(f),\tilde{f})_D = (f, K_{\mathbf c}(\tilde
f))_D$
\end{enumerate}
\label{properties}
\end{cor}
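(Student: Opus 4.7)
The plan is to deduce all three statements from Proposition \ref{property} together with the expansions of $E_i(D)$, $F_i(D)$, and $K_\mathbf{c}(D)$ in the basis $\{[A]\}$. The crucial combinatorial input is that transposition swaps $E$-type and $F$-type elementary matrices: directly from the definitions $_\mathbf{a}\mathbf{e}_\mathbf{a'} = \mathbf{i_a} - E^{i,i} + E^{i,i+1}$ and $_\mathbf{a'}\mathbf{f}_\mathbf a = \mathbf{i_{a'}} - E^{i+1,i+1} + E^{i+1,i}$, together with $\mathbf a = \mathbf a' + \mathbf i$, one checks the diagonal and off-diagonal entries match, so $(_\mathbf{a}\mathbf{e}_\mathbf{a'})^t = {}_\mathbf{a'}\mathbf{f}_\mathbf a$ and conversely.

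For (1), I would apply Proposition \ref{property} termwise to each summand of $E_i(D) = \sum[_\mathbf{a}\mathbf{e}_\mathbf{a'}]$. The scalar $v^{d_A - d_{A^t}}$ for $A = {}_\mathbf{a}\mathbf{e}_\mathbf{a'}$ is computed from equation (\ref{difference}): since $r(A) = \mathbf a$ and $c(A) = \mathbf a'$, one gets $2(d_A - d_{A^t}) = \sum_k a_k^2 - \sum_k {a'_k}^2 = 2(a_i - a_{i+1} - 1)$. Thus after summing over $\mathbf a \cup_i \mathbf a'$, the right side becomes $\sum_{\mathbf a\cup_i \mathbf a'} v^{a_i - a_{i+1} - 1}(f, [_\mathbf{a'}\mathbf{f}_\mathbf a]\tilde f)_D$. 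To identify this with $(f, vK_\mathbf{i}F_i(\tilde f))_D$, I note that $[\mathbf{i_{a'}}]\cdot[_\mathbf{a'}\mathbf{f}_\mathbf a] = [_\mathbf{a'}\mathbf{f}_\mathbf a]$ (the idempotent property of the diagonal basis elements, which follows from $d_{\mathbf{i_b}}=0$), so $K_\mathbf{i}F_i(D) = \sum v^{\mathbf i\cdot \mathbf a'}[_\mathbf{a'}\mathbf{f}_\mathbf a]$, and the exponents match since $v\cdot v^{\mathbf i\cdot \mathbf a'} = v^{1 + (a'_i - a'_{i+1})} = v^{a_i - a_{i+1} - 1}$.

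Part (2) is entirely parallel: one applies Proposition \ref{property} to $A = {}_\mathbf{a'}\mathbf{f}_\mathbf a$, finds $d_A - d_{A^t} = -a_i + a_{i+1} + 1$ (the sign flip coming from $r$ and $c$ being interchanged compared to (1)), and checks that $v\cdot v^{-\mathbf i \cdot \mathbf a} = v^{-a_i + a_{i+1} + 1}$ so that the scalar is absorbed into $vK_{-\mathbf i}E_i$. For (3), observe $(\mathbf{i_b})^t = \mathbf{i_b}$ and $d_{\mathbf{i_b}} = 0$, so Proposition \ref{property} gives directly $([\mathbf{i_b}]f,\tilde f)_D = (f,[\mathbf{i_b}]\tilde f)_D$; summing against the weights $v^{\mathbf c \cdot \mathbf b}$ in the definition of $K_\mathbf c(D)$ yields the result.

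There is no real obstacle here: the argument is purely a bookkeeping check. The only point requiring care is matching the scalar $v^{d_A - d_{A^t}}$ produced by Proposition \ref{property} with the scalar $v^{1 \pm \mathbf i \cdot \mathbf a^{(\prime)}}$ appearing in $vK_{\pm \mathbf i}$; this amounts to the elementary identity $a_i - a_{i+1} - 1 = 1 + (a'_i - a'_{i+1})$ when $\mathbf a \cup_i \mathbf a'$.
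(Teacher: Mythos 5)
Your proposal is correct and takes essentially the same route as the paper: apply Proposition \ref{property} to the elementary matrices, use the transpose relation $(_\mathbf{a}\mathbf{e}_\mathbf{a'})^t = {}_\mathbf{a'}\mathbf{f}_\mathbf{a}$, and match the power of $v$ via Equation (\ref{difference}). The only cosmetic difference is that the paper reduces to $f=e_A$, $\tilde f = e_B$ so that a single term of $E_i(D)$ contributes, and derives (2) from (1), (3) and symmetry, whereas you verify (2) directly and work with the full sums termwise; both are valid bookkeeping.
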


\begin{proof}
We may assume that $f = e_A$ and $\tilde f = e_{B}$. The third equation can then be checked immediately from the formulas above. The second equation follows from the symmetry of the inner product and the other two, so it only remains to prove the first. We may assume that $r(A)=r(B)-\mathbf i$ and $c(A)=c(B)$, as both sides are zero otherwise. Set $\mathbf a= r(A), \mathbf b= r(B)$. Now from the definitions we have
\[
E_i(e_A)=[_{\mathbf b}\mathbf e_{\mathbf a}]\cdot e_A , \qquad vK_{\mathbf i}F_i(e_B)=v^{1+\mathbf
i \cdot \mathbf a}[_{\mathbf a}\mathbf f_{\mathbf b}]\cdot e_B.
\]
Since $_{\mathbf b}\mathbf e_{\mathbf a} =$ $_{\mathbf a}\mathbf f_{\mathbf b}^t$, and $d_{_{\mathbf b}\mathbf e_{\mathbf a}}-d_{_{\mathbf a}\mathbf f_{\mathbf b}}=1+\mathbf i \cdot \mathbf a$ the result now follows immediately from the previous proposition.
\end{proof}

\begin{remark}
\label{transposeinnerproduct}
There is a unique algebra anti-automorphism $\rho\colon \mathbf U (\widehat{\mathfrak{gl}}_n) \to
\mathbf U(\widehat{\mathfrak{gl}}_n)$ such that
\[
\rho (E_i)=vK_{\mathbf i}F_i, \quad \rho(F_i)= vK_{-\mathbf i}E_i
\quad\rho(K_i)=K_{i}
\]
With this we may state the result of the previous corollary in the
form
\begin{equation}
\label{leftmodulecompatibility}
(u(f),\tilde f)_D=(f,\rho(u)\tilde f)_D,\qquad u\in \mathbf U(\widehat{\mathfrak{gl}}_n),\quad
f,\tilde f \in \mathfrak A_D.
\end{equation}

Note also that another natural choice\footnote{It is clear that one needs to restrict one factor to run over representatives of the $G$-orbits to avoid infinity sums -- in the finite type case considered in \cite{BLM90} this issue doesn't arise.} of definition for an inner product on $\mathfrak A_{D,q}$ would be given by taking the sum in Equation \ref{ipdefinition} over a set of representative of the $\text{Aut}(V)$-orbits on $\mathcal F$ in the first factor, and all lattices in the second (the opposite of our choice). This inner product which we denote by $(,)^t_D$ is obtained from the one we use via the antiautomorphism $\Psi$, that is 
\[
(f,\tilde{f})^t_D = (\Psi(f),\Psi(\tilde{f}))_D
\]
and thus obeys ``transposed'' versions of the properties established in this section so that 
\begin{equation}
\label{rightmodulecompatibility}
(fu,\tilde{f})^t_D = (f,\tilde{f}\bar{\rho}(u))^t_D, \qquad u \in \mathbf U(\widehat{\mathfrak{gl}}_n), \quad f,\tilde{f} \in \mathfrak A_D,
\end{equation}
where $\bar{\rho}$ is given by $\bar{\rho}(x) = \overline{\rho(\bar{x})}$. The precise relation between $(,)_D$ and $(,)^t_D$ can be given as follows: if $f, \tilde{f} \in [\mathbf i_\mathbf a]\mathfrak A_D[\mathbf i_\mathbf b]$ then 
\begin{equation}
\label{iprelation}
v^{\sum_{i=1}^n b_i^2}([[\mathbf b]]!)^{-1}(f,\tilde{f})_D = v^{\sum_{i=1}^n a_i^2}([[\mathbf a]]!)^{-1}(f,\tilde{f})^t_D.
\end{equation}
where we define $[[ \mathbf a]]!= \prod_{i=1}^n\prod_{j=1}^{a_i}(1-v^{-2j})$. In the finite type case of \cite{BLM90} this follows easily from considering the orbits on the product of the space of $n$-step flags with itself, while in the affine case it requires some more care. Since we will not use this fact we do not include the details. 

\end{remark}

\begin{lemma}
\begin{enumerate}
\item For $A \in \mathfrak S_{D,n,n}$, $([A],[A])_D \in
1+v^{-1}\mathbb Z[v^{-1}]$ \item For $A, A'\in \mathfrak
S_{D,n,n}$ and $A \neq A'$, $([A],[A'])_D=0$
\end{enumerate}
\label{values}
\end{lemma}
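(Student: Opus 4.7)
The plan is to deduce both claims directly from the explicit formula $(e_A,e_{A'})_D = \delta_{A,A'}q^{d_A-d_{A^t}}|X_{A^t}^{\mathbf L'}|$ derived just before the lemma. Rescaling via $[A] = v^{-d_A}e_A$ gives
\[
([A],[A'])_D = \delta_{A,A'}\,v^{-2d_{A^t}}|X_{A^t}^{\mathbf L'}|
\]
for any fixed $\mathbf L' \in \mathcal F_{c(A)}$, so part (2) is immediate from the Kronecker delta.

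For part (1), the expression above lies in $1+v^{-1}\mathbb Z[v^{-1}]$ as soon as $|X_{A^t}^{\mathbf L'}|$, viewed as a polynomial in $q=v^2$, is monic of degree exactly $d_{A^t}$ with integer coefficients. Indeed, if $|X_{A^t}^{\mathbf L'}| = q^{d_{A^t}} + \sum_{k<d_{A^t}}c_k q^k$ with $c_k\in\mathbb Z$, then
\[
v^{-2d_{A^t}}|X_{A^t}^{\mathbf L'}| = 1 + \sum_{k<d_{A^t}}c_k\, v^{2(k-d_{A^t})} \in 1+v^{-2}\mathbb Z[v^{-2}] \subset 1+v^{-1}\mathbb Z[v^{-1}].
\]

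The task thus reduces to the geometric assertion that $|X_B^{\mathbf L}|$, for $B=A^t$ and $\mathbf L$ of type $r(B)$ fixed, is a monic polynomial in $q$ of degree $d_B$. The natural strategy is to construct an affine cellular stratification of $X_B^{\mathbf L}$ with a unique top-dimensional piece of dimension $d_B$. One proceeds by induction, fibering $X_B^{\mathbf L}$ over a locally closed stratum of a Grassmannian parametrizing the first step $L_1'$ with prescribed intersection dimensions with the components of $\mathbf L$; each fiber is again of the same form for a smaller matrix, and iterating yields the decomposition. The formula $d_B = \sum_{i\geq k,\,j<l,\,1\leq i\leq n}b_{ij}b_{kl}$ is precisely the dimension of the top cell recorded by this iteration.

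The principal obstacle is the bookkeeping of the cell decomposition in the cyclic/affine setting, but this parallels the Bruhat decomposition of partial flag varieties used in \cite{BLM90} and is already carried out in Lusztig \cite{L99}, so one may simply invoke it. Alternatively, one can apply Lusztig's explicit multiplication formulas in $\mathfrak A_{D,n,n}$ to compute $\eta_{A^t,A;q}^{\mathbf i_{c(A)}} = |X_{A^t}^{\mathbf L'}|$ as a polynomial in $q$ directly and read off its leading term; once the matching of the geometric dimension with $d_{A^t}$ is verified, both parts of the lemma follow at once.
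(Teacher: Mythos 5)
Your reduction is exactly right: $([A],[A'])_D = \delta_{A,A'} v^{-2d_{A^t}}|X_{A^t}^{\mathbf L'}|$, so part (2) is immediate, and part (1) reduces to showing that $|X_{A^t}^{\mathbf L'}|$ is a monic integer polynomial in $q$ of degree exactly $d_{A^t}$. Where you and the paper part ways is in how that monicity is established, and there is a real gap in your justification.

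The paper does \emph{not} use a cell decomposition. It invokes \cite[4.3]{L99}, which says only that $X_{A^t}^{\mathbf L'}$ is an irreducible variety of dimension $d_{A^t}$, and then combines this with the already-established polynomiality in $q$ of the structure constants and the Lang--Weil estimates \cite{LW54}: for an irreducible variety of dimension $d$ the point count is $q^d + O(q^{d-1/2})$, and for a polynomial in $q$ this asymptotic forces the leading term to be $q^{d}$. This is minimal: it uses only irreducibility, dimension, and polynomiality, all of which are already on the table.

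Your proposal instead asks for an affine cellular stratification of $X_{A^t}^{\mathbf L'}$ with a unique top-dimensional cell, and you assert this ``is already carried out in Lusztig \cite{L99}.'' That attribution is not accurate: \cite{L99} establishes irreducibility and dimension of the orbit fibres, not a cell decomposition. In the finite-type setting of \cite{BLM90} the Grassmannian-fibering argument you sketch does produce such a stratification, but in the periodic lattice setting there is no ``first step'' $L_1'$ to peel off independently of the others, and the bookkeeping you flag as a ``principal obstacle'' is genuinely unresolved --- it is precisely what the Lang--Weil route is designed to avoid. Your second suggestion (reading off the leading coefficient from the multiplication formulas) also does not directly apply: the explicit formulas available (Lemma~\ref{multiplicationformulae}) give multiplication only by the Chevalley generators $E_i(D), F_i(D)$, not by an arbitrary $[A^t]$, so one cannot compute $\eta_{A^t,A;q}^{\mathbf i_{c(A)}}$ that way without additional argument. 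So while the strategy is sound in outline, the step you treat as citable is in fact the crux, and the paper's Lang--Weil argument both fills it and is shorter.
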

\begin{proof}
The second part of the statement is obvious. For the first, note that by \cite[4.3]{L99} the set $X^{\mathbf L'}_{A^t}$ is an irreducible variety of dimension $d_{A^t}$. Since we have
\[
([A],[A'])_D = \delta_{A,A'}q^{- d_{A^t}}|X_{A^t}^{\mathbf{L'}}|,
\]
the Lang-Weil estimates \cite{LW54} then show that $([A],[A])_D \in 1+v^{-1}\mathbb Z[v^{-1}]$, as required.
\end{proof}

\begin{remark}
The results of this section are analogues of the results of \cite[section 7]{L99}; however our inner product is \textit{not} the same as that of \cite[7.1]{L99}, and this make the proofs somewhat
simpler. Lusztig's choice of inner product has the property that it is compatible with the left and right $\mathbf U$-module structure, in the sense that Equations (\ref{leftmodulecompatibility}) and (\ref{rightmodulecompatibility}) both hold. Although this is not quite proved in \cite{L99} it follows from our above discussion using Equation (\ref{iprelation}). However, it does not produce the inner product on $\Ud$ in the way we need.
\end{remark}

\section{Inner product on $\dot{\mathbf U}$}
\label{innerU}
In this section we will write $\Ud$ to denote the modified quantum group $\Ud(\widehat{\mathfrak{sl}}_n)$ associated to the root datum $(X',Y')$ defined in Section \ref{background}. We wish to obtain an inner product on $\dot{\mathbf U}$ using those on the family of algebras $\{\mathbf U_D\}_{D \in \mathbb N}$.

We begin with some technical lemmas. Given $A \in \mathfrak S^{n,n}$ let $a_{i,\geq s} = \sum_{j \geq s}a_{i,j}$, and $a_{i, >s}$, $a_{i,\leq s}$, etc. similarly. 

\begin{lemma}
\label{multiplicationformulae}
a) Let $A \in \mathfrak S_{D,n,n}$ and $\mathbf{a'}=r(A)$. If there is an $\mathbf a \in \mathfrak S_{D,n}$ such that $\mathbf a \cup_i \mathbf a'$ (i.e. if $a_{i+1}'>0$) then we have
\begin{equation}
[_{\mathbf a}\mathbf e_{\mathbf a'}][A] = \sum_{s \in \mathbb Z,
a_{i+1,s}\geq 1} v^{a_{i,\geq s}-a_{i+1,>s}}
\left(\frac{1-v^{-2(a_{i,s}+1)}}{1-v^{-2}}\right)
[A+E^{i,s}-E^{i+1,s}],
\end{equation}
where $A=(a_{i,j})$.

b) Let $A' \in \mathfrak S_{D,n,n}$ and $\mathbf a=r(A')$. If there is an $\mathbf a' \in \mathfrak S_{D,n}$ such that $\mathbf a \cup_i \mathbf a'$ (i.e. if $a_i>0$) then we have
\begin{equation}
[_{\mathbf{a'}}\mathbf f_{\mathbf a}][A'] = \sum_{s \in \mathbb Z,
a'_{i,s}\geq 1} v^{a_{i+1,\leq s}'-a_{i,<s}'}
\left(\frac{1-v^{-2(a_{i+1,s}'+1)}}{1-v^{-2}}\right)
[A'-E^{i,s}+E^{i+1,s}],
\end{equation}
where $A'=(a'_{i,j})$.  \label{lemma4.1}
\end{lemma}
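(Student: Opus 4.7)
The plan is to prove (a) by computing the convolution $e_{{}_\mathbf a\mathbf e_{\mathbf{a'}}} \cdot e_A$ directly in $\mathfrak A_{D;q}$ at a fixed pair $(\mathbf L, \mathbf L'')$, and then translating back to the normalized basis via $[X] = v^{-d_X} e_X$. Part (b) will follow by a parallel count rather than by functoriality: applying the anti-automorphism $\Psi$ to (a) produces a formula for $[A^t]\cdot[{}_{\mathbf{a'}}\mathbf f_{\mathbf a}]$, not $[{}_{\mathbf{a'}}\mathbf f_{\mathbf a}]\cdot[A']$, so (b) genuinely needs its own (structurally identical) argument.

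First I would unwind the relative-position condition $(\mathbf L, \mathbf L') \in \mathcal O_{{}_\mathbf a\mathbf e_{\mathbf{a'}}}$ using ${}_\mathbf a\mathbf e_{\mathbf{a'}} = \mathbf{i_a} - E^{i,i} + E^{i,i+1}$. The diagonal entries outside row $i$ force $L'_j = L_j$ for $j\not\equiv i \pmod n$, while the off-diagonal entry $1$ at $(i,i+1)$ together with $L'_{i+1}=L_{i+1}$ forces $L'_i$ to be a codimension-one subspace of $L_i$ containing $L_{i-1}$. Hence intermediate lattices $\mathbf L'$ are parametrized by hyperplanes $H \subset L_i/L_{i-1}$.

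Next I would sort these hyperplanes by the resulting orbit $\mathcal O_{A'}$ of $(\mathbf L', \mathbf L'')$. Since only row $i$ of the relative-position matrix can change, with the missing line pushed up into row $i+1$, the new matrix must be $A + E^{i,s} - E^{i+1,s}$ for a unique $s$ determined by where $H$ first fails to contain the filtration of $L_i/L_{i-1}$ induced by intersections with the $L''_j$; the hypothesis $a_{i+1,s}\geq 1$ in the sum is precisely the nonnegativity constraint on the target matrix. A standard $q$-Gaussian count then identifies the number of hyperplanes producing a given $s$ with $q^{a_{i,>s}}(q^{a_{i,s}+1}-1)/(q-1)$.

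Finally, one passes from $e_A$ to $[A] = v^{-d_A}e_A$: combining the $q$-count with the rescaling reduces the claim to the $d$-exponent identity
\[
d_{A+E^{i,s}-E^{i+1,s}} = d_A + d_{{}_\mathbf a\mathbf e_{\mathbf{a'}}} - a_{i,\geq s} - a_{i+1,>s}.
\]
Verifying this, by expanding $d_X = \sum_{1\leq r\leq n,\,k\leq r,\,j<l}a_{rj}a_{kl}$ and collecting the linear change when one unit of mass moves from position $(i+1,s)$ to position $(i,s)$ with due care for the affine periodicity $a_{r,j}=a_{r+n,j+n}$, is the main combinatorial obstacle. Once (a) is established, (b) follows by the same strategy applied to ${}_{\mathbf{a'}}\mathbf f_{\mathbf a} = \mathbf{i_{a'}} - E^{i+1,i+1} + E^{i+1,i}$: the intermediate lattices are now parametrized by lines $\ell \subset L_{i+1}/L_i$ rather than by hyperplanes, and the parallel Gaussian count together with the analogous $d$-shift delivers the asymmetric formula of (b), with exponent $a'_{i+1,\leq s}-a'_{i,<s}$ and quantum coefficient $(1-v^{-2(a'_{i+1,s}+1)})/(1-v^{-2})$.
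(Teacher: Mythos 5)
The paper's own proof is a one-line citation: it invokes \cite[Prop.\ 3.5]{L99} and observes that the stated formula is the rescaling of Lusztig's structure-constant computation from the $\{e_A\}$ basis to the $\{[A]\}$ basis. Your argument instead reproves the content of that proposition from scratch and then performs the same rescaling, so it is a genuinely different (more self-contained) route that buys independence from \cite{L99} at the cost of a longer combinatorial verification.

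Your outline is essentially correct. The parametrization is right: $({}_\mathbf a\mathbf e_{\mathbf a'})$-relative position forces $L'_j = L_j$ for $j \not\equiv i$ and $L_{i-1}\subseteq L'_i\subsetneq L_i$ of codimension one, so intermediate $\mathbf L'$ correspond to hyperplanes $H$ in $L_i/L_{i-1}$. Writing $W_j$ for the filtration of $L_i/L_{i-1}$ induced by $\mathbf L''$ (whose layer dimensions are the row-$i$ entries of $A' = A + E^{i,s}-E^{i+1,s}$), the condition $(\mathbf L',\mathbf L'')\in \mathcal O_A$ is precisely $W_{s-1}\subseteq H$, $W_s\not\subseteq H$; the row-$(i+1)$ entries then come out to $a_{i+1,j}$ automatically by column-sum conservation, which is worth noting explicitly. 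The resulting Gaussian count is $q^{a_{i,>s}}\bigl(q^{a_{i,s}+1}-1\bigr)/(q-1)$, as you say, and rewriting this with $q=v^2$ and folding in the $v^{-d}$ renormalization reduces the claim to exactly your displayed identity $d_{A+E^{i,s}-E^{i+1,s}} = d_A + d_{{}_\mathbf a\mathbf e_{\mathbf a'}} - a_{i,\geq s} - a_{i+1,>s}$. That identity is true, and you should actually check it rather than flagging it: note that $d_{{}_\mathbf a\mathbf e_{\mathbf a'}}$ simplifies to $a_i-1 = a_{i,*}$, so the identity becomes the more transparent $d_{A'} - d_A = a_{i,<s}-a_{i+1,>s}$, which can be verified by a direct bookkeeping of the terms of $d$ that involve positions $(i,s)$ and $(i+1,s)$ (taking care with the affine periodicity $a_{r,j}=a_{r+n,j+n}$ and the normalization $1\leq r\leq n$). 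Your observation that $\Psi$ applied to part (a) gives $[A^t][{}_{\mathbf a'}\mathbf f_\mathbf a]$ rather than the desired left multiplication is correct, and your mirror argument for (b) — lines $\ell\subset L_{i+1}/L_i$ in place of hyperplanes — is the right fix.
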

\begin{proof}
This follows by rescaling the statement of Proposition 3.5 in
\cite{L99}.
\end{proof}

Let $\mathcal R$ be the subring of $\mathbb Q(v)[u]$ generated by $\{v^j\colon j \in \mathbb Z\}$, and
\[
\prod_{i=1}^t(v^{-2(a-i)}u^{2}-1)/(v^{-2i}-1); \qquad a\in \mathbb Z, t \geq 1.
\]
For $A \in \mathfrak S^{n,n}$ let $_pA$ be the matrix with $(_pA)_{i,j}=a_{i,j}+p\delta_{i,j}$. We have the
following partial analogue of \cite[4.2]{BLM90}.

\begin{lemma}
Let $A_1,A_2, \ldots ,A_k$ be matrices of the form $_{\mathbf
a}\mathbf e_{\mathbf a'}$ or $_{\mathbf a}\mathbf f_{\mathbf a'}$,
for $\mathbf a,\mathbf{a'} \in \mathfrak S^n$, and $A$ any element of
$\mathfrak S^{n,n}$. Then there exist matrices $Z_1, Z_2,
\ldots,Z_m \in \mathfrak S^{n,n}$,  and $G_1, G_2,\ldots, G_m \in \mathcal R$ and an integer $p_0 \in \mathbb Z$ such that
\begin{equation}
[_pA_1][_pA_2]\ldots [_pA_k][_pA]= \sum_{i=1}^m
G_i(v,v^{-p})[_pZ_i], 
\end{equation}
for all $p \geq p_0$.
\end{lemma}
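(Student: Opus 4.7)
My plan is to induct on $k$, reducing each step to a single application of Lemma~\ref{multiplicationformulae}. The base case $k=0$ is immediate: $[_pA]$ is already of the required form, with $m=1$, $G_1=1$, $Z_1=A$, and $p_0=0$. For the inductive step I apply the hypothesis to the tail $[_pA_2]\cdots[_pA_k][_pA]$ to obtain an expansion $\sum_{j}G'_{j}(v,v^{-p})[_pZ'_{j}]$ for $p$ sufficiently large. Since $\mathcal R$ is a subring of $\mathbb Q(v)[u]$, bilinearity then reduces matters to the case $k=1$: given an arbitrary $Z\in\mathfrak S^{n,n}$, expand $[_pA_1][_pZ]$ in the required form.

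Suppose $A_1={_\mathbf{a}\mathbf{e}_{\mathbf{a}'}}$ (the $\mathbf f$-case is symmetric via part (b) of Lemma~\ref{multiplicationformulae}), and that $r(_pZ)=c(_pA_1)$ (else the product vanishes). Lemma~\ref{multiplicationformulae}(a) gives
\[
[_pA_1][_pZ]=\sum_{s:\,(_pZ)_{i+1,s}\geq 1} v^{(_pZ)_{i,\geq s}-(_pZ)_{i+1,>s}}\,\frac{1-v^{-2((_pZ)_{i,s}+1)}}{1-v^{-2}}\,[\,_pZ+E^{i,s}-E^{i+1,s}].
\]
The sum is finite, and differs from the corresponding sum for $Z$ by at most the single extra index $s=i+1$, which appears once $p\geq 1$ (since then $(_pZ)_{i+1,i+1}=z_{i+1,i+1}+p\geq 1$). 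A direct computation gives $_pZ+E^{i,s}-E^{i+1,s}={_p(Z+E^{i,s}-E^{i+1,s})}$, and the summation condition guarantees that the summand matrix belongs to $\mathfrak S^{n,n}$.

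The crux is that, after substituting $u=v^{-p}$, every coefficient lies in $\mathcal R$. From $(_pZ)_{i,j}=z_{i,j}+p\delta_{i,j}$ one obtains
\[
(_pZ)_{i,\geq s}-(_pZ)_{i+1,>s}=z_{i,\geq s}-z_{i+1,>s}+p(\mathbf 1_{s\leq i}-\mathbf 1_{s\leq i})=z_{i,\geq s}-z_{i+1,>s},
\]
so the $v$-exponent is $p$-independent. The quantum-integer factor is either the Laurent polynomial $\sum_{j=0}^{z_{i,s}}v^{-2j}\in\mathcal R$ (when $s\neq i$ as integers), or $(1-v^{-2(z_{i,i}+1)}u^{2})/(1-v^{-2})$ (when $s=i$), the latter being precisely the generator of $\mathcal R$ corresponding to $t=1$, $a=z_{i,i}+2$. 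Composing with the inductive hypothesis and taking $p_0$ to be the maximum of the thresholds arising at each stage concludes the proof. The main obstacle I foresee is purely bookkeeping: tracking the extra summation index $s=i+1$ introduced by the diagonal shift, and confirming that the cancellation in the $v$-exponent confines all $p$-dependence to a single factor whose shape matches exactly the defining generators of $\mathcal R$.
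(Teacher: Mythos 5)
Your proof is correct and takes essentially the same route as the paper: induction on $k$ reducing to a single application of Lemma~\ref{multiplicationformulae}, followed by the observation that the exponents $a_{i,\geq s}-a_{i+1,>s}$ (resp.\ $a_{i+1,\leq s}-a_{i,<s}$) are invariant under $A\mapsto A+pI$, while the sole $p$-dependent quantum-integer factor, arising at $s=i$ (resp.\ $s=i+1$), becomes a $t=1$ generator of $\mathcal R$ after the substitution $u=v^{-p}$; the paper relegates these checks to the analogous Proposition 4.2 of \cite{BLM90}, where you carry them out explicitly. One minor imprecision: the threshold for the extra summation index is $p\geq 1-z_{i+1,i+1}$, not simply $p\geq 1$, since diagonal entries of $Z\in\mathfrak S^{n,n}$ can be negative -- but this is absorbed harmlessly into the choice of $p_0$.
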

\begin{proof}
This follows using the same argument as in the proof of Proposition 4.2 in \cite{BLM90} (where a similar but stronger result is proved for the finite-type case). One uses induction on $k$. When $k=1$ the result follows from the previous lemma, once we note that both $a_{i,\geq s}-a_{i+1,>s}$
and $a_{i+1,\leq s}-a_{i,<s}$ are unchanged when $A$ is replaced with $A+pI$. 
\end{proof}

Recall from Section \ref{background} that there is a surjective homomorphism $\phi_D\colon \dot{\mathbf U} \to \mathbf U_D$ which, for $\lambda \in X$, sends $E_i 1_{\lambda} \mapsto E_i(D)[\mathbf i_{\mathbf a}]$ and $F_i 1_{\lambda} \mapsto F_i(D)[\mathbf i_{\mathbf a}]$  if there is an $\mathbf a$ in $\mathfrak S_{D,n}$ such that $\mathbf a = \lambda$ mod $\mathbb Z\mathbf b_0$, otherwise both $E_i 1_{\lambda}$, $F_i 1_{\lambda}$ are sent to zero. Let $\mathbf f$ be the algebra defined in Section \ref{background}. Pick a monomial basis of $\mathbf f$, $\{\zeta_i \colon i \in J\}$ say. The triangular decomposition for $\dot{\mathbf U}$ \cite[23.2.1]{L92} shows that $\mathfrak B=  \{\zeta_i^+\zeta_j^-1_{\lambda}\colon i,j \in J, \lambda \in X\}$ is a basis of $\dot{\mathbf U}$, where $+\colon \mathbf f \to \mathbf U^+$, and  $-\colon \mathbf f \to \mathbf U^-$ are the standard isomorphisms defined by $\theta_i \mapsto E_i$ and $\theta_i \mapsto F_i$ respectively. Define a bilinear pairing $\langle,\rangle_D$ on $\dot{\mathbf U}$ via $\phi_D$ as follows:
\[
\langle x,y\rangle_D = (\phi_D(x),\phi_D(y))_D
\]

\begin{prop}
Let $k \in \{0,1,\ldots,n-1\}$, then if $x,y \in \dot{\mathbf U}$
\[
\langle x,y \rangle_{k+pn}
\]
converges in $\mathbb Q((v^{-1}))$, as $p \to \infty$, to an
element of $\mathbb Q(v)$.
\end{prop}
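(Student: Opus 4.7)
The plan is to reduce by bilinearity to basis vectors $x = u_x 1_\lambda$ and $y = u_y 1_\mu$ of $\mathfrak{B}$, with $u_x, u_y$ monomials in the $E_i, F_i$, then use the anti-automorphism $\rho$ of Corollary \ref{properties} together with the orthogonality of the $[A]$-basis to rewrite $\langle x, y\rangle_{k+pn}$ as the coefficient of a particular diagonal basis element in a product of matrix generators, and finally invoke the preceding lemma (the partial analogue of \cite[4.2]{BLM90}) to express that product in the explicit $_p$-form with coefficients in $\mathcal{R}$ evaluated at $v^{-p}$. Convergence in $\mathbb{Q}((v^{-1}))$ to an element of $\mathbb{Q}(v)$ will then be essentially automatic from the $v^{-1}$-adic topology.

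To set this up, first dispose of the cases where $\phi_{k+pn}(x)$ or $\phi_{k+pn}(y)$ vanishes for all $p$ (a congruence condition on $\sum \lambda_i, \sum \mu_i$ modulo $n$); otherwise fix representatives so that $\mathbf{a} = \mathbf{a}_0 + p'\mathbf{b}_0$ and $\mathbf{a}'' = \mathbf{a}_0'' + p''\mathbf{b}_0$ with $p', p''$ offset from $p$ by fixed integers, and write $\phi_{k+pn}(x) = u_x[\mathbf{i}_\mathbf{a}]$, $\phi_{k+pn}(y) = u_y[\mathbf{i}_{\mathbf{a}''}]$. The identity $([\mathbf{i}_\mathbf{a}], [\mathbf{i}_\mathbf{a}])_D = 1$ (immediate from $d_{\mathbf{i}_\mathbf{a}} = 0$ and $X^{\mathbf{L}'}_{\mathbf{i}_\mathbf{a}} = \{\mathbf{L}'\}$) together with Corollary \ref{properties} gives
\[
\langle x, y\rangle_{k+pn} = \bigl([\mathbf{i}_\mathbf{a}], \rho(u_x) u_y [\mathbf{i}_{\mathbf{a}''}]\bigr)_{k+pn},
\]
which by orthogonality is precisely the coefficient of $[\mathbf{i}_\mathbf{a}]$ in $\rho(u_x) u_y [\mathbf{i}_{\mathbf{a}''}]$.

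Now $\rho(u_x) u_y$ is a monomial in the $E_i, F_i, K_{\pm\mathbf{i}}$. Pushing all $K$-factors to the right via the standard commutations $K_\mathbf{c} E_i = v^{\langle\mathbf{c}, \mathbf{i}'\rangle} E_i K_\mathbf{c}$ yields a single residual $K_{\mathbf{c}_0}$ (with $\mathbf{c}_0$ in the $\mathbb{Z}$-span of $\{\mathbf{i}: i \in I\}$) together with a scalar $v^{N_1}$, both determined by the abstract monomial and hence independent of $p$. When $K_{\mathbf{c}_0}$ acts on $[\mathbf{i}_{\mathbf{a}''}]$ it contributes a further $v^{\mathbf{c}_0 \cdot \mathbf{a}''}$, and since $\mathbf{i} \cdot \mathbf{b}_0 = 0$ for every $i \in I$ this reduces to $v^{\mathbf{c}_0 \cdot \mathbf{a}_0''}$, again independent of $p$. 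Thus $\rho(u_x) u_y [\mathbf{i}_{\mathbf{a}''}] = v^N W [\mathbf{i}_{\mathbf{a}''}]$ for a fixed $N \in \mathbb{Z}$, with $W$ a product of matrix generators $[_\mathbf{b}\mathbf{e}_{\mathbf{b}'}]$, $[_{\mathbf{b}'}\mathbf{f}_\mathbf{b}]$, each of the form $[_{p''}A_s]$ for a fixed matrix $A_s$ (since shifting $\mathbf{b}$ by $p''\mathbf{b}_0$ merely shifts $\mathbf{i}_\mathbf{b}$ by $p''I$), and $[\mathbf{i}_{\mathbf{a}''}] = [_{p''}\mathbf{i}_{\mathbf{a}_0''}]$. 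The preceding lemma then yields, for $p$ sufficiently large, $W[\mathbf{i}_{\mathbf{a}''}] = \sum_{i=1}^m G_i(v, v^{-p''})[_{p''} Z_i]$ with $G_i \in \mathcal{R}$ and finitely many fixed $Z_i \in \mathfrak{S}^{n,n}$.

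Extracting the coefficient of $[\mathbf{i}_\mathbf{a}] = [_{p'}\mathbf{i}_{\mathbf{a}_0}]$ picks out at most a single index $i_0$, leaving $\langle x, y\rangle_{k+pn} = v^N G_{i_0}(v, v^{-p''})$. Writing $G_{i_0}(v, u) = \sum_{r \geq 0} c_r(v) u^r$ with $c_r \in \mathbb{Q}(v)$, each summand $c_r(v) v^{-r p''}$ with $r \geq 1$ has $v^{-1}$-adic valuation growing linearly in $p$, so the sequence converges in $\mathbb{Q}((v^{-1}))$ to $v^N c_0(v) = v^N G_{i_0}(v, 0) \in \mathbb{Q}(v)$. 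The main obstacle is the $p$-independence of $N$: every potentially $p$-dependent scalar arises from a $K_\mathbf{c}$-action with $\mathbf{c}$ in the root lattice, and all of these collapse via the identity $\mathbf{i} \cdot \mathbf{b}_0 = 0$, which is precisely what encodes the passage from $\widehat{\mathfrak{gl}}_n$ to $\widehat{\mathfrak{sl}}_n$ at the level of root data.
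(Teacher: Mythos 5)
Your argument is correct and follows essentially the same route as the paper: reduce to basis elements of $\mathfrak B$, use Corollary~\ref{properties} (the $\rho$-adjunction) and orthogonality of $\{[A]\}$ to reduce to extracting the coefficient of the diagonal idempotent $[\mathbf i_\mathbf a]$ in a product of matrix generators, then invoke the preceding stabilization lemma to write this as $G(v,v^{-p})$ for $G \in \mathcal R$. Your justification that the accumulated power of $v$ is $p$-independent (via $\mathbf i \cdot \mathbf b_0 = 0$) is a point the paper elides with ``it is easy to see,'' but the underlying argument is the same.
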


\begin{proof}
We may assume that $x,y$ are elements of $\mathfrak B$. Then we need to show that
\[
\langle\zeta^+_{i_1}\zeta^-_{j_1}1_\lambda,\zeta^+_{i_2}\zeta^-_{j_2}1_\mu\rangle_{k+pn}
\qquad i_1,i_2,j_1,j_2 \in J; \lambda, \mu \in X
\]
converges as $p \to \infty$. Let $\iota\colon \mathbf f \to \mathbf f$ is the $\mathbb Q(v)$-algebra anti-automorphism fixing the generators $\theta_i, 1 \leq i \leq n$. Using Proposition~\ref{properties}, it is easy to see that this inner product differs from
\begin{equation}
\langle 1_\lambda,\iota(\zeta_{j_1})^+\iota(\zeta_{i_1})^-\zeta_{i_2}^+\zeta_{j_2}^-1_\mu\rangle_{k+pn}
\label{value}
\end{equation}
by a power of $v$ which is independent of $p$. But then the definition of the inner product and the previous proposition show that (\ref{value}) may be written as $G(v,v^{-p})$ for some $G \in \mathcal R$. The result then follows immediately from the definition of $\mathcal R$.
\end{proof}

\begin{remark}
\label{strongconvergence}
Note that the proof of the last proposition actually allows us to conclude that
\[
(\phi_D(\zeta^+_i\zeta^-_j1_\lambda),[_pA])_{k+pn}
\]
converges to an element of $\mathbb Q(v)$, as $p \to \infty$, for any $A \in \mathfrak S^{n,n}$. We will need this in the next section.
\end{remark}

\begin{definition}
We define
\[
\langle, \rangle \colon \dot{\mathbf U}\times \dot{\mathbf U} \to \mathbb Q(v),
\]
a symmetric bilinear form on $\dot{\mathbf U}$ given by
\[
\langle x, y\rangle = \sum_{k=0}^{n-1}\lim_{p \to \infty}\langle x,y\rangle_{k+pn}.
\]
\end{definition}

\begin{remark}
Although the inner products $(,)_D$ satisfy only Equation (\ref{leftmodulecompatibility}) and not Equation (\ref{rightmodulecompatibility}), the formula (\ref{iprelation}) which relates $(,)_D$ and $(,)_D^t$ can be used to show that our limiting inner product $\langle, \rangle$ satisfies the analogue of both equations, as indeed Lusztig shows for his inner product on $\Ud$ in \cite[Prop. 26.1.3]{L93}.
\end{remark}

\section{Comparison of inner products} 
\label{comparison}
Lusztig has shown that the algebra $\Ud$ has a natural inner product which characterised by the following result. (Again, in this section $\Ud$ denotes the modified quantum group attached to the root datum $(X',Y')$.)

\begin{theorem}[Lusztig]
\label{Udinnerproduct}
There exists a unique $\mathbb Q(v)$ bilinear pairing $(,) \colon \dot{\mathbf U}\times\dot{\mathbf U} \to \mathbb Q(v)$ such that
\begin{enumerate}
\item 
$(1_{\lambda_1}x1_{\lambda_2},1_{\mu_1}y1_{\mu_2}) =0 \qquad \forall x,y \in \dot{\mathbf U}$ unless
$\lambda_1=\mu_1,\lambda_2=\mu_2$; 
\item 
$( ux,y) = (x,\rho(u)y) \qquad \forall x,y \in \dot{\mathbf U}, u \in \mathbf U$; 
\item 
$( x^-1_\lambda,y^-1_\lambda)=(x,y), \qquad \forall x,y \in\mathbf f, \lambda \in X$.
 \end{enumerate}
Here $(x,y)$ is the standard inner product on $\mathbf f$, $($see
\cite[1.2.5]{L93}$)$. The resulting inner product is automatically
symmetric. \label{algebraic}
\end{theorem}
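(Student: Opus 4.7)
The plan is to separate the proof into uniqueness and existence. Uniqueness will come from a reduction that uses (1) and (2) to collapse any such pairing onto the fixed pairing on $\mathbf f$ via (3); existence will be obtained by turning this reduction into an explicit formula.

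For uniqueness, I would use the triangular decomposition of $\dot{\mathbf U}$ from \cite[23.2.1]{L93}: every element of $\dot{\mathbf U}$ is a $\mathbb Q(v)$-linear combination of products $x^+y^-1_\lambda$ with $x,y \in \mathbf f$ and $\lambda \in X$. By (1) the pairing vanishes unless the $X$-gradings match on both arguments, so it suffices to determine $(x^+y^-1_\lambda, x'^+y'^-1_\lambda)$. Applying (2) with $u = x^+$ rewrites this as $(y^-1_\lambda, \rho(x^+)x'^+y'^-1_\lambda)$. Since $\rho$ is an anti-automorphism, $\rho(x^+)$ lies in $\mathbf U^{\leq 0}$ (a power of $v$ and a $K$-factor times an element of $\mathbf U^-$), so the product $\rho(x^+)x'^+$ rewrites, via the defining relations of $\mathbf U$---in particular $[E_i,F_j] = \delta_{ij}(K_i-K_i^{-1})/(v-v^{-1})$---as a sum of ordered monomials $a^-b^0c^+$, in which $b^0$ acts on $1_\lambda$ by a scalar. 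A second application of (2) absorbs the residual $c^+$-factors, and iterating the process terminates with a sum of pairings $(y_1^-1_\lambda, z_1^-1_\lambda)$, which by (3) equal the fixed values $(y_1,z_1)$ on $\mathbf f$. Hence the pairing is completely determined.

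For existence, I would turn this reduction into an explicit definition: declare the pairing to vanish on $X$-mismatched pairs, and in the matched case set it equal to the finite sum produced by the above reduction, with base cases read off from the standard $\mathbf f$-pairing. Properties (1) and (3) are then built in; the nontrivial task is to verify (2), which amounts to showing that the prescription respects each defining relation of $\mathbf U$ under $\rho$---the $[E,F]$-commutator, the $K$-relations, and the quantum Serre relations. The asserted symmetry then follows from the symmetry of the $\mathbf f$-pairing \cite[1.2.5]{L93} together with the manifest invariance of the contraction formula under swapping the two arguments. The principal obstacle is thus the verification of (2) for the explicit formula, which in full Kac–Moody generality is the content of \cite[Ch.~26]{L93}; for the applications in this paper the theorem may simply be quoted from there.
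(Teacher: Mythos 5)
The paper's proof of this theorem is simply a citation to \cite[26.1.2]{L93}, and your proposal ultimately does the same: after sketching the uniqueness-by-reduction and existence-by-formula strategy (which is indeed the shape of Lusztig's argument there), you note that the verification belongs to \cite[Ch.~26]{L93} and that the result can be quoted. One small correction to your sketch: the symmetry is not ``manifest'' from the contraction formula but is deduced from the uniqueness characterization itself together with the fact that $\rho^2 = \mathrm{id}$, so that $(x,y)\mapsto (y,x)$ also satisfies (1)--(3) and must coincide with the original pairing.
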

\begin{proof}
See \cite[26.1.2]{L93}.
\end{proof}

\begin{theorem}
The inner products $\langle,\rangle$ of section \ref{innerU} and $(,)$ of
Theorem~\ref{algebraic} coincide. \label{same}
\end{theorem}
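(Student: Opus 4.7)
The plan is to verify that the limiting form $\langle,\rangle$ satisfies the three defining properties of Lusztig's inner product in Theorem~\ref{Udinnerproduct}; the uniqueness assertion there then forces $\langle,\rangle = (,)$.

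Properties (1) and (2) should be essentially immediate. Weight orthogonality (1) is inherited at every finite $D$ from the orthogonality of the basis $\{[A]\}$ under $(,)_D$ (Lemma~\ref{values}) combined with the compatibility of $\phi_D$ with the $X'$-grading: the image of $1_{\lambda_1}x1_{\lambda_2}$ is supported on those $[A]$ with $r(A)\equiv\lambda_1$ and $c(A)\equiv\lambda_2 \text{ mod } \mathbb Z\mathbf b_0$, so pairings across distinct weights vanish termwise at each $D$ and therefore in the limit. The $\rho$-compatibility (2) is obtained by passing Corollary~\ref{properties} to the limit: the identity $(u(f),\tilde f)_D = (f,\rho(u)\tilde f)_D$ holds at every $D$ and both sides converge by the convergence proposition of Section~\ref{innerU}, so it persists under $\lim_p$ and $\sum_k$; the relevant $\rho$ on $\Ud(\widehat{\mathfrak{sl}}_n)$ is simply the restriction of its $\widehat{\mathfrak{gl}}_n$-level counterpart and agrees on common generators.

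The substantive task is property (3), namely $\langle x^-1_\lambda, y^-1_\lambda\rangle = (x,y)_{\mathbf f}$. My plan is to induct on the length of $x$ as a monomial in $\mathbf f$. The base case $x=1$ reduces by (1) to $\langle 1_\lambda, 1_\lambda\rangle = 1$, which follows from $\phi_D(1_\lambda) = [\mathbf i_\mathbf a]$ together with the direct evaluation $([\mathbf i_\mathbf a],[\mathbf i_\mathbf a])_D = 1$ (Lemma~\ref{values}, using that $d_{\mathbf i_\mathbf a}=0$ and that $[\mathbf i_\mathbf a]$ is idempotent). For the inductive step, write $x=\theta_i x'$ and apply (2) to obtain
\[
\langle x^-1_\lambda, y^-1_\lambda\rangle = \langle (x')^-1_\lambda, vK_{-\mathbf i}E_i(y^-1_\lambda)\rangle.
\]
Lusztig's standard commutation formula for $E_i$ past an element of $\mathbf U^-$ expands $E_i y^-$ into a skew-derivation piece $r_i(y)^- K_i + \ldots$ together with terms that pair trivially by weight orthogonality. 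The surviving $K$-eigenvalues combine with the $vK_{-\mathbf i}$ prefactor to produce exactly the factor $(1-v^{-2})^{-1}$ and reduce the computation to $(1-v^{-2})^{-1}\langle (x')^-1_\lambda,(r_i(y))^- 1_{\lambda'}\rangle$ for a shifted weight $\lambda'$; by induction this equals $(1-v^{-2})^{-1}(x',r_i(y))_{\mathbf f} = (\theta_i x', y)_{\mathbf f} = (x,y)_{\mathbf f}$, closing the induction.

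The hardest step will be controlling the $D$-dependent bookkeeping in the inductive reduction. At $D = k+pn$ the idempotent $1_\lambda$ maps to $[\mathbf i_\mathbf a]$ for only one residue class of $\mathbf a$, and the $K_{-\mathbf i}$-eigenvalue on this idempotent depends on $\mathbf a$; the full $X'$-graded structure of $\Ud$ is only recovered after $\lim_{p\to\infty}$ and the sum $\sum_{k=0}^{n-1}$. The stabilization lemma and Remark~\ref{strongconvergence} guarantee that the limits exist in $\mathbb Q(v)$, but verifying that all subleading powers of $v^{-p}$ cancel — leaving exactly the $(1-v^{-2})^{-1}$ demanded by the standard inner product on $\mathbf f$ — requires careful matching of leading coefficients. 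This is essentially the affine analogue of Lusztig's computation in \cite[26.1.3]{L93}, adapted to our normalization of the inner product via Remark~\ref{transposeinnerproduct} and equation~(\ref{iprelation}).
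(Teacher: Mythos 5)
Your high-level plan matches the paper's proof: verify the three characterizing properties from Theorem~\ref{algebraic}, then invoke uniqueness. Properties (1) and (2) are handled correctly and as in the paper, and the inductive framework for property (3) (induction on degree of monomials in $\mathbf f$, base case $\langle 1_\lambda,1_\lambda\rangle=1$, inductive step via property (2) and the commutation formula for $E_i$ past $\mathbf U^-$) is also the same. However, there is a genuine gap in the central step.

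After applying property (2) and the commutation formula, you assert that the $y^-E_i$ term (and the second skew-derivation term) ``pair trivially by weight orthogonality,'' so that only the $r_i(y)$ contribution survives. This is incorrect. In $\Ud$ one has $E_i1_\lambda \neq 0$, and $y^-E_i1_\lambda$ lies in the \emph{same} weight component $1_{\lambda-|x'|}\Ud 1_\lambda$ as $(x')^-1_\lambda$; there is no weight-theoretic reason for this pairing to vanish, nor for the ${}_ir(y)^-$ term to vanish. The paper's computation makes this explicit: after tidying (in the paper's symmetric conventions, with $y=\theta_i z$), what is left over is the element
\[
u(x)=\left(x^-E_i-\frac{v^{-\mathbf i\cdot\lambda}}{v-v^{-1}}\,r_i(x)^-\right)1_\lambda,
\]
and the induction closes only once one proves that $u(x)$ is orthogonal to $\mathbf U^-1_\lambda$. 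This is not a formal consequence of the setup --- $u(x)$ is a nonzero element of $\Ud$ in the same weight block as $\mathbf U^-1_\lambda$ --- and it is precisely what Lemmas~\ref{mult} and~\ref{uglytechnicallemma} establish. The mechanism is an explicit multiplication computation in $\mathfrak A_{k+pn}$ showing that the difference $s_{k+pn}(x)-r_{k+pn}(x)$ carries an overall factor of $v^{-2p}$; combined with Remark~\ref{strongconvergence} (convergence of each $([_pZ],\phi_{k+pn}(y^-1_\lambda))_{k+pn}$), this forces $\langle u(x),y^-1_\lambda\rangle_{k+pn}\to 0$. Your closing paragraph about ``controlling the $D$-dependent bookkeeping'' is aimed at the wrong target: the difficulty is not in recovering the $(1-v^{-2})^{-1}$ factor, which falls out of the commutation formula directly, but in proving the asymptotic orthogonality of $u(x)$ to $\mathbf U^-1_\lambda$. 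Without this step the induction does not close. (As a side remark, the paper explicitly states that Equation~(\ref{iprelation}) is not used in this argument, so your appeal to Remark~\ref{transposeinnerproduct} is a red herring.)
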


The remainder of this section is devoted to a proof of this theorem. Property $(1)$ listed in Theorem~\ref{algebraic} clearly holds for $\langle,\rangle$, as the representatives for elements of $X$ in $\mathfrak S_{D,n}$ are distinct when they exist. Property $(2)$ follows from Corollary ~\ref{properties}; thus it only
remains to verify the third. 

Fix $\lambda \in X$. The algebra $\mathbf f$ is naturally graded: $\mathbf f = \bigoplus_{\nu \in \mathbb NI}\mathbf f_\nu$. For $\nu \in \mathbb Z[I]$, with $\nu = \sum_{i \in I}\nu_i i$ let $tr(\nu)= \sum_{i \in I}\nu_i$. If $z$ is homogeneous we set $|z| = \nu$, where $z \in \mathbf f_\nu$. Thus for the third property we may assume that $x,y \in \mathbf f$ are homogeneous, i.e. $x,y \in \mathbf f_\nu$ for some $\nu$, and proceed by induction on $N=\mathrm{tr}(\nu)$. If $N=0$ then we are reduced to the equation
\[
\langle 1_\lambda,1_\lambda\rangle= 1,
\]
which holds trivially. Now suppose that $N>0$ and the result is known for $x,y \in \mathbf f_\nu$ when $\mathrm{tr}(\nu)<N$. If $x,y$ are in $\mathbf f_\nu$, $\mathrm{tr}(\nu)=N$, then we may assume
that they are monomials, and $y=\theta_iz$ for some $z \in \mathbf f_{\nu - \mathbf i}.$ Thus we have
\[
\begin{split}
\langle x^-1_\lambda,y^-1_\lambda\rangle
&= \langle x^-1_\lambda,F_iz^-1_\lambda\rangle \\
&= \langle vK_{-\mathbf i}E_ix^-1_\lambda, z^-1_\lambda\rangle,
\end{split}
\]
using property $(2)$ of the inner product (which have already seen holds for both $(,)$ and $\langle,\rangle$). Now using standard commutation formulas (see \cite[3.1.6]{L93}) this becomes
\[
\langle vK_{-\mathbf i}x^-E_i1_\lambda, z^-1_\lambda\rangle+
\frac{1}{1-v^{-2}}\langle(_ir(x)^{-}-vK_{-\mathbf i}r_i(x)^-K_{-\mathbf i})1_\lambda, z^-1_\lambda\rangle
\]
where $_ir$ and $r_i$ are the twisted derivations defined in \cite[1.2.13]{L93}. Tidying this up we get
\[
\frac{1}{1-v^{-2}}\langle_ir(x)^-1_\lambda,z^-1_\lambda\rangle +\langle v^{\mathbf i
\cdot |x|-\mathbf i \cdot \lambda-1} \left(x^-E_i-\frac{v^{-\mathbf i
\cdot \lambda}}{v-v^{-1}}r_i(x)^-\right)1_\lambda,z^-1_\lambda\rangle
\]

But $_ir(x),z \in \mathbf f_{\nu-\mathbf i}$, hence by induction we have $\langle _ir(x)1_\lambda,z1_\lambda\rangle = (_ir(x)^-,z)$, and by standard properties of the inner product $(,)$ on $\mathbf f$ we know that $\frac{1}{1-v^{-2}}(_ir(x),z)= (x,\theta_iz)$, thus we are done by induction if we can show that for any $x\in \mathbf f_\nu$ the element 
\begin{equation}
\label{uelement}
u(x) = \left(x^-E_i-\frac{v^{-\mathbf i \cdot \lambda}}{v-v^{-1}}r_i(x)^-\right)1_\lambda
\end{equation}
annihilates $\mathbf U^-1_\lambda$. To see this we need some (rather technical) Lemmas about multiplication in $\mathfrak A_D$.

\begin{lemma}
Let $A \in \mathfrak S^{n,n}$ be such that $a_{r,s}=0$ for $r<s$ unless $r=s-1$ and $r=i$ mod $n$, when $a_{r,r+1}\in \{0,1\}$; then the following hold for $p$ sufficiently large.
\begin{enumerate}
\item For $j \neq i$ we have
\[
F_j[_pA]= \sum_{k=1}^mg_k(v)[_pZ_k]
\]
where $g_k(v) \in \mathcal A$ and $Z_k \in \mathfrak S^{n,n}$ ($1 \leq k \leq m)$ and moreover $g_k(v)$ is independent both of $p$ and $\{a_{r,s}\colon r \leq s\}$, and we have $(Z_k)_{r,s}=a_{r,s}$ for $r<s$. \item
\[
\begin{split}
F_i[_pA] &=\sum_{k=1}g_k(v)[_pZ_k] \\ &\qquad + v^{1-\mathbf
i\cdot r(A)} \biggl(\frac{1-v^{-2(a_{i+1,i+1}+1+p)}}{1-v^{-2}}
\biggr) [_p(A+E^{i+1,i+1}-E^{i,i+1})]
\end{split}
\]
where $g_k(v) \in \mathcal A$ is independent of $p$ and $\{a_{r,s}\colon r \leq s\}$, and we have $(Z_k)_{r,s}=a_{r,s}$ for $r<s$, and the final term occurs only if $a_{i,i+1}=1$.
\end{enumerate}
\label{mult}
\end{lemma}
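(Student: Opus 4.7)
The plan is to apply part (b) of Lemma \ref{multiplicationformulae} directly to $F_j[_pA]$, then analyze which values of $s$ contribute, which coefficients depend on $p$, and which entries of $A$ are actually used. Writing $A' = {_pA}$ so that $a'_{r,t} = a_{r,t} + p\delta_{r,t}$, the formula gives
$$F_j[_pA] \;=\; \sum_{s:\, a'_{j,s}\geq 1} v^{a'_{j+1,\leq s}-a'_{j,<s}} \left(\frac{1-v^{-2(a'_{j+1,s}+1)}}{1-v^{-2}}\right)[_pA - E^{j,s} + E^{j+1,s}].$$

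The first observation, which is the only small miracle in the argument, is that the exponent is automatically independent of $p$: expanding, $a'_{j+1,\leq s} = a_{j+1,\leq s} + p\,[j{+}1\leq s]$ and $a'_{j,<s} = a_{j,<s} + p\,[j<s]$ (Iverson brackets), and the two conditions $[j{+}1\leq s]$ and $[j<s]$ coincide, so the $p$'s cancel and the exponent reduces to $a_{j+1,\leq s} - a_{j,<s}$.

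Next I would enumerate the contributing indices $s$ using the hypothesis on $A$. For $p$ large, $a'_{j,s}\geq 1$ iff $s=j$, or $s<j$ with $a_{j,s}\geq 1$, or $s=j+1$ with $a_{j,j+1}=1$; the third possibility requires $j\equiv i\pmod n$ by the hypothesis. I will call the first two possibilities the \emph{bulk} cases; both have $s\leq j$. In these cases, $a_{j+1,\leq s}$ and $a_{j,<s}$ involve only entries $a_{r,t}$ with $r>t$, so the exponent depends only on strictly lower--triangular data, and $a'_{j+1,s} = a_{j+1,s}$ is itself strictly lower--triangular, so the quantum integer is also independent of $p$ and of $\{a_{r,t}:r\leq t\}$. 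Since $-E^{j,s}+E^{j+1,s}$ only alters entries with row $\geq$ column, writing ${_pA} - E^{j,s}+E^{j+1,s} = {_pZ_k}$ we get $(Z_k)_{r,t}=a_{r,t}$ for $r<t$. Summing the bulk contributions yields the piece $\sum g_k(v)[_pZ_k]$ appearing in both (1) and (2).

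The remaining case $s=j+1$ occurs only when $j\equiv i\pmod n$ and $a_{i,i+1}=1$; this is absent in statement (1) and produces the final term in statement (2). Here the quantum integer $(1-v^{-2(a_{i+1,i+1}+1+p)})/(1-v^{-2})$ genuinely depends on $p$, matching the statement. For the exponent, the hypothesis forces $a_{i+1,t}=0$ for all $t>i+1$, so $a_{i+1,\leq i+1} = a_{i+1,*} = r(A)_{i+1}$, while $a_{i,\leq i} = a_{i,*} - a_{i,i+1} = r(A)_i - 1$; the exponent therefore equals $r(A)_{i+1}-r(A)_i+1 = 1-\mathbf{i}\cdot r(A)$, as claimed. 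The corresponding matrix is ${_pA}-E^{i,i+1}+E^{i+1,i+1} = {_p(A+E^{i+1,i+1}-E^{i,i+1})}$, completing the identification.

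The whole proof is essentially bookkeeping driven by the explicit multiplication formula; the main obstacle is simply to track the case analysis cleanly enough to see that exactly one contributing index, namely $s=j+1$ in the case $j\equiv i\pmod n$, $a_{i,i+1}=1$, escapes the ``bulk'' behavior. The cancellation $[j{+}1\leq s] = [j<s]$ makes the $v$-exponent free of $p$ automatically, and the hypothesis on $A$ rules out any further upper--triangular contributions by restricting the support of upper--triangular entries to $(i+kn,\,i+1+kn)$.
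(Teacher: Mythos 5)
Your proposal is correct and follows essentially the same route as the paper's proof: apply the explicit multiplication formula from Lemma~\ref{multiplicationformulae}(b), observe that every contributing index $s$ except possibly $s=j+1$ satisfies $s\leq j$ (so the coefficient involves only strictly lower-triangular entries and is $p$-independent), and isolate the single exceptional term $s=j+1$ occurring when $j\equiv i\pmod n$ and $a_{i,i+1}=1$. Your explicit Iverson-bracket remark that $[j{+}1\leq s]=[j<s]$ simply makes transparent a cancellation the paper uses silently when it writes the simplified exponent.
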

\begin{proof}
By Lemma \ref{multiplicationformulae}, for any $A \in \mathfrak S^{n,n}$ and $p$ large enough we have,
\[
F_j[_pA]=\sum_{k\colon (_pA)_{j,k}\geq 1} v^{a_{j+1,\leq k}-a_{j,<k}}
\biggl( \frac{1-v^{-2(a_{j+1,k}+p\delta_{j+1,k}+1)}}{1-v^{-2}}
\biggr) [_pA+E^{j+1,k}-E^{j,k}].
\]
We claim that in our case the coefficients are independent of $p$ and of $\{a_{r,s}\colon r \leq s\}$, unless $j\equiv i \text{ mod } n$. Indeed then $(_pA)_{j,k} \geq 1$ implies that $j \geq k$, and hence the coefficient of $[_pA + E^{j+1,k}-E^{j,k}]$ in the above sum is
\[
v^{a_{j+1,\leq k}-a_{j,<k}}\big(\frac{1-v^{-2(a_{j+1,k}+1)}}{1-v^{-2}}\big)
\]
which evidently involves only entries $a_{r,s}$ of $A$ with $r>s$, thus establishing the first part of the Lemma.

For the second part, if $j \equiv i \text{ mod } n$ then we get the same conclusion except when $k=j+1$, if $a_{j,j+1}=1$ in which case $a_{j+1,\leq j+1} = a_{j+1,*}$ and $a_{j,<j+1} = a_{j,*}-1$ by our assumptions, so that the term $a_{j+1,\leq k} - a_{j, <k} = 1-\mathbf i\cdot r(A) $, and this yields the final term in second part, as required.
\end{proof}

Fix $\lambda \in X'$. Then $\sum_{i=1}^n \lambda_i =k \text{ mod } n$ for a well-defined $k \in \{0,1,\ldots n-1\}$. If $D = k+pn$, then there is a unique $\mathbf a \in X$ which is a representative of $\lambda \in X'$ satisfying $\sum_{i=1}^n a_i = D$ (that is, $\mathbf a \in \mathfrak S_{D,n}$). Let $\mathfrak A_D^- =\mathrm{span} \{[A]\colon a_{r,s}=0, \forall r<s\}$, and note that Lemma~\ref{mult} shows that $\phi_D(x^-1_\lambda) \in \mathfrak A_D^-$ for any $x \in \mathbf f$.

\begin{lemma}
\label{uglytechnicallemma}
Let $\sum_{j=1}^n\lambda_j = k$ mod $n$, where $k \in \{0,1,\dotsc,n-1\}$, and suppose that $D=k+pn$ for some $p$.  Then if $x$ is a monomial in the generators $\{\theta_i: i\in I\}$ we have, for sufficiently large $p$,
\begin{equation}
\label{Ecommutation}
\begin{split}
\phi_D(x^-E_i1_\lambda) = \sum_{k=1}^{m_1}a_k(v)[_pB_k] + \sum_{k=1}^{m_2}(b_k(v)+v^{-2p}c_k(v))[_pH_k]; \\
\phi_D(x^-1_\lambda) = \sum_{k=1}^{m_1}a_k(v)[_pB_k + E^{i+1,i+1} -E^{i,i+1}] 
\end{split}
\end{equation}
for some $B_k, H_k \in \mathfrak S^{n,n}$ independent of $p$, where $(B_k)_{r,s}=(H_k)_{r,s}= 0$ for $r<s$ unless $r =i \text{ mod }n$ when $(B_k)_{i,i+1}=1$, and the coefficients $a_k, b_k, c_k$ are independent of $p$, with $a_k \in \mathcal A$ and $b_k,c_k \in (v-v^{-1})^{-1}\mathcal A$.
Moreover, we have 
\[
\frac{v^{-\mathbf i\cdot \lambda}}{v-v^{-1}}\phi_D(r_i(x)^-1_\lambda) = \sum_{k=1}^{m_2}b_k(v)[_pH_k]
\]
\end{lemma}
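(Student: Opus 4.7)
The plan is to prove all three identities simultaneously by induction on the length $N$ of the monomial $x$. The base case $N = 0$ (so $x = 1$) is immediate: $r_i(1) = 0$ makes the third identity trivial, and Lemma~\ref{multiplicationformulae}(a) evaluates $\phi_D(E_i 1_\lambda) = E_i(D)[\mathbf i_\mathbf a]$ as a single summand (coming from $s = i+1$) with coefficient $1$. Writing $\mathbf a = \mathbf a_0 + p\mathbf b_0$ for a fixed $p$-independent representative $\mathbf a_0$, one recognizes this as $[_pB_1]$ with $B_1 = \mathbf i_{\mathbf a_0} + E^{i,i+1} - E^{i+1,i+1}$, so that $\phi_D(1_\lambda) = [\mathbf i_\mathbf a] = [_p(B_1 + E^{i+1,i+1} - E^{i,i+1})]$, matching the required form.

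For the inductive step, write $x = \theta_j y$ with $y$ of length $N-1$ and apply $F_j$ term by term to the inductive expressions via Lemma~\ref{mult}. When $j \not\equiv i \pmod n$, Lemma~\ref{mult}(1) gives $F_j[_pA] = \sum_l g_l(v)[_pZ_l]$ with coefficients $g_l$ independent of $p$ and of $\{a_{r,s} : r \leq s\}$, and with upper-triangular entries preserved. Since $B_k'$ and $B_k' + E^{i+1,i+1} - E^{i,i+1}$ differ only in entries with $r \leq s$, this independence forces $F_j$ to produce parallel sums on both matrices with identical coefficients, propagating the first two identities; the third identity then follows by applying $F_j$ to the inductive third identity, using $r_i(\theta_j y) = \theta_j r_i(y)$ (which holds since $j \neq i$). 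When $j \equiv i \pmod n$, Lemma~\ref{mult}(2) contributes the extra summand
\[
v^{1 - \mathbf i\cdot r(B_k')}\cdot\frac{1 - v^{-2((B_k')_{i+1,i+1}+1+p)}}{1 - v^{-2}}\cdot[_p(B_k' + E^{i+1,i+1} - E^{i,i+1})]
\]
in $F_i[_pB_k']$ (triggered by $(B_k')_{i,i+1} = 1$); decomposing $\frac{1 - v^{-2(c+1+p)}}{1 - v^{-2}} = \frac{1}{1 - v^{-2}} - \frac{v^{-2(c+1)}}{1 - v^{-2}}v^{-2p}$ extracts the $(b + v^{-2p}c)$ form with coefficients in $(v - v^{-1})^{-1}\mathcal A$. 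No analogous extra term appears in $F_i[_pH_k']$ since $(H_k')_{i,i+1} = 0$, and the $B$-type matching for the first two identities proceeds exactly as in the previous case.

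For the "moreover" claim when $j \equiv i$, use Lusztig's twisted derivation formula $r_i(\theta_i y) = v^{\mathbf i\cdot|y|}y + \theta_i r_i(y)$ from \cite[1.2.13]{L93} to rewrite
\[
\frac{v^{-\mathbf i\cdot\lambda}}{v - v^{-1}}\phi_D(r_i(\theta_i y)^- 1_\lambda) = F_i\cdot\frac{v^{-\mathbf i\cdot\lambda}}{v - v^{-1}}\phi_D(r_i(y)^- 1_\lambda) + \frac{v^{\mathbf i\cdot|y| - \mathbf i\cdot\lambda}}{v - v^{-1}}\phi_D(y^- 1_\lambda).
\]
The first summand, by induction, reproduces the new $H$-terms coming from $F_i$ acting on the $H_k'$ part; the second must match the extra $H$-terms $a_k' \cdot v^{1 - \mathbf i\cdot r(B_k')}/(1-v^{-2})\cdot[_p(B_k' + E^{i+1,i+1} - E^{i,i+1})]$ produced by Lemma~\ref{mult}(2). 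The main obstacle is the resulting $v$-power identity $v^{\mathbf i\cdot|y| - \mathbf i\cdot\lambda}/(v - v^{-1}) = v^{1 - \mathbf i\cdot r(B_k')}/(1 - v^{-2})$, equivalent to $\mathbf i\cdot r(B_k') = 2 + \mathbf i\cdot\lambda - \mathbf i\cdot|y|$. This is a weight computation: the row-sum vector $r(B_k')$ represents the left weight of $y^- E_i 1_\lambda$, namely $\lambda + \mathbf i - |y|$ in $X'$, and using $\mathbf i\cdot\mathbf i = 2$ together with $\mathbf i\cdot\mathbf b_0 = 0$ to pass between $X$ and $X'$ yields exactly the required identity, closing the induction.
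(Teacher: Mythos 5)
Your proof matches the paper's own argument in all essentials: the same induction on the length of the monomial $x$, the same base case with $\phi_D(E_i 1_\lambda) = [\mathbf i_\mathbf a + E^{i,i+1} - E^{i+1,i+1}]$, the same termwise application of $F_j$ via Lemma~\ref{mult} (with the independence-from-upper-triangular-entries property doing the work of matching the $B$-parts across the two formulas), the same isolation of the $\delta_{i,j}$-term as the source of the new $H$-contributions, the same twisted Leibniz rule $r_i(\theta_j y) = \theta_j r_i(y) + \delta_{ij} v^{\mathbf i\cdot|y|}y$, and the same closing weight computation $\mathbf i\cdot r(B'_k) = 2 + \mathbf i\cdot\lambda - \mathbf i\cdot|y|$ (equivalently $1 - \mathbf i\cdot r(B'_k) = \mathbf i\cdot(|y|-\lambda)-1$), using $\mathbf i\cdot\mathbf i = 2$ and $\mathbf i\cdot\mathbf b_0 = 0$.
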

\begin{proof}
We use induction on $N=\text{tr}(|x|)$. If $N=0$ then the result is clear, since we have $\phi_D(E_i1_\lambda) = [\mathbf i_\mathbf a+E^{i,i+1}-E^{i+1,i+1}]$ and $\phi_E(1_\lambda) = [\mathbf i_{\mathbf a}]$ (thus in this case we have $m_1=1$ and $m_2 =0$). Now suppose the result is known for all $y$ with $\text{tr}(|y|) <N$. We may write $x= \theta_jz$ where $\text{tr}(|z|) = N-1$. 

By induction, we have 
\[
\begin{split}
\phi_D(x E_i1_\lambda) &= F_j\phi_D(zE_i1_\lambda) \\
&= F_j\big(\sum_{k=1}^{m'_1}a'_k(v)[_pB'_k] + \sum_{k=1}^{m'_2}(b'_k(v)+v^{-2p}c'_k(v))[_pH'_k]\big)
\end{split}
\]
where $a'_k,b'_k,c'_k,B'_k,H'_k$ as in the statement of the Lemma (for $x=z$). Now applying Lemma \ref{mult} above to each of these terms, we find that
\begin{equation}
\label{inductionformula}
\begin{split}
\phi_D(xE_i1_\lambda) =& \sum_{k=1}^{m_1} a_k(v)[_pB_k] + \delta_{i,j}v^{1-\mathbf i\cdot r(B'_k)}\big(\frac{1-v^{-2((B'_k)_{i+1,i+1}+1+p)}}{1-v^{-2}}\big)a'_k(v)[_pB'_k+E^{i+1,i+1}-E^{i,i+1}] \\
&+ \sum_{k=1}^{m''_2}(b_k(v)+v^{-2p}c_k)[_pH_k]\\
\end{split}
\end{equation}
where $(B_k)_{r,s} =0$ if $r<s$ unless $r=i\text{ mod } n$ and $s=r+1$, and similarly $(H_k)_{r,s} =0$ if $r<s$. Thus the first formula of the Lemma is established by induction once we note that when $i\equiv j \text{ mod } n$ we may write
\[
\begin{split}
v^{1-\mathbf i\cdot r(B'_k)}& \big(\frac{1-v^{-2((B'_k)_{i+1,i+1}+1+p)}}{1-v^{-2}}\big)a'_k(v)[_pB'_k+E^{i+1,i+1}-E^{i,i+1}]\\
&=\frac{v^{-\mathbf i\cdot r(B'_k)}}{v-v^{-1}}\big(1-v^{-2p}v^{-2((B'_k)_{i+1,i+1}+1}\big)a'_k(v)[_pB'_k+E^{i+1,i+1}-E^{i,i+1}]\\
&= (b_{m''_2+k}+v^{-2p}c_{m''_2+k})[H_{m''_2+k}],
\end{split}
\]
where the last line defines $b_{m''_2+k},c_{m''_2+k}$ and $H_{m''_2+k}$, and by induction $b_{m''_2+k}, c_{m''_2+k} \in (v-v^{-1})^{-1}\mathcal A$, since $a'_k(v) \in \mathcal A$. Setting $m_2 = m''_2+\delta_{i,j}m_1$ the first formula is therefore established.

To show the second formula, we again use induction so that we have
\[
\phi_D(x^{-}1_\lambda) = F_j.\sum_{k=1}^{m'_1} a'_k(v)[_pB'_k+E^{i+1,i+1}-E^{i,i+1}] 
\]
Now by Lemma \ref{mult} (in particular the independence of the coefficients from the values of $\{a_{r,s}: r\leq s\}$ in all but the final term of the second formula) this is equal to
\[
 \sum_{k=1}^{m_1} a_k(v)[_pB_k+E^{i+1,i+1}-E^{i,i+1}], 
\]
as required. Finally, to see the moreover part of the Lemma, note that by definition we have
\[
\begin{split}
\biggl(\frac{v^{-\mathbf i \cdot \lambda}}{v-v^{-1}}\biggr)\phi_D(r_i(x)^-1_\lambda) 
&= \biggl(\frac{v^{-\mathbf i \cdot \lambda}}{v-v^{-1}}\biggr)\phi_D(r_i(\theta_j z)^-1_\lambda) \\
&= \biggl(\frac{v^{-\mathbf i \cdot \lambda}}{v-v^{-1}}\biggr)\big(\delta_{i,j}v^{\mathbf i\cdot |z|}\phi_D(z^-1_\lambda)+F_j(\phi_D(r_i(z)^-1_\lambda)\big)\\
\end{split}
\]
\noindent
Comparing this with Equation \ref{inductionformula}, we note that $r(B'_k)= \mathbf a +\mathbf i-|z|$, hence $1-\mathbf i \cdot r(B'_k)= \mathbf i \cdot (|z|- \lambda) -1$, so that
\[
\begin{split}
v^{1-\mathbf i \cdot r(B'_k)}\biggl(
\frac{1-v^{-2((B'_k)_{i+1,i+1}+p+1)}}{1-v^{-2}} \biggr ) =\biggl(
\frac{v^{\mathbf i \cdot (|z|- \lambda)}}{v-v^{-1}} \biggr)
(1-v^{-2p}v^{-2((B'_k)_{i+1,i+1}+1)}).
\end{split}
\]
hence the result follow once again by induction.
\end{proof}

Having established these technical Lemmas, it is now straight-forward to complete the proof of Theorem~\ref{same}. Let $\pi_D\colon \mathfrak A_D \to\mathfrak A_D^-$ be the orthogonal projection. Define $s_D\colon \mathbf f \to \mathfrak A_D^-$ by setting 
\[
x \mapsto \pi_D(\phi_D(x^-E_i1_\lambda))
\]
and define $r_D\colon \mathbf f \to \mathfrak A_D^-$ by setting
\[
x \mapsto \frac{v^{-\mathbf i \cdot \lambda}}{v-v^{-1}}\phi_D(r_i(x)^-1_\lambda).
\]
\noindent
The following is an immediate consequence of Lemma \ref{uglytechnicallemma}.
\begin{cor}
Let $x \in \mathbf f$.
\[
s_D(x)-r_D(x) = \frac{v^{-2p}}{v-v^{-1}}\biggl( \sum_{k=1}^mc_k(v)[_pZ_k] \biggr)
\]
for some $Z_k \in \mathfrak S^{n,n}$ and $c_k \in \mathcal A$, independent of $p$. Hence the element  $u(x)$ of Equation \ref{uelement} is orthogonal to $\mathbf U^-1_\lambda$.
\end{cor}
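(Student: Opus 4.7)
The plan is to read off the first identity directly from Lemma~\ref{uglytechnicallemma} by applying the orthogonal projection $\pi_D$ to the basic decomposition of $\phi_D(x^-E_i1_\lambda)$, and then to deduce the orthogonality of $u(x)$ from the convergence statement in Remark~\ref{strongconvergence}.

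First, recall that the inner product $(,)_D$ is orthogonal with respect to the basis $\{[A]\}$, so $\mathfrak A_D^-$ (spanned by those $[A]$ with $a_{r,s}=0$ for $r<s$) is an orthogonal direct summand of $\mathfrak A_D$. By Lemma~\ref{uglytechnicallemma},
\[
\phi_D(x^-E_i1_\lambda) = \sum_{k=1}^{m_1}a_k(v)[_pB_k] + \sum_{k=1}^{m_2}\bigl(b_k(v)+v^{-2p}c_k(v)\bigr)[_pH_k],
\]
where each $B_k$ has $(B_k)_{i,i+1}=1$ and hence $[_pB_k]\notin \mathfrak A_D^-$, whereas each $H_k$ lies in the support of $\mathfrak A_D^-$. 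Applying $\pi_D$ therefore annihilates the first sum and preserves the second, so $s_D(x) = \sum_{k=1}^{m_2}\bigl(b_k(v)+v^{-2p}c_k(v)\bigr)[_pH_k]$. The ``moreover'' part of Lemma~\ref{uglytechnicallemma} identifies $r_D(x) = \sum_{k=1}^{m_2}b_k(v)[_pH_k]$, and subtracting gives
\[
s_D(x)-r_D(x) = \sum_{k=1}^{m_2}v^{-2p}c_k(v)[_pH_k].
\]
Rewriting each $c_k\in (v-v^{-1})^{-1}\mathcal A$ from the lemma as $(v-v^{-1})^{-1}$ times an element of $\mathcal A$ yields the stated formula with the coefficients $c_k$ as in the corollary and $Z_k:=H_k$.

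For the orthogonality of $u(x)$, fix $y\in \mathbf U^-1_\lambda$. By the discussion preceding Lemma~\ref{uglytechnicallemma} we have $\phi_D(y)\in \mathfrak A_D^-$, and similarly $\phi_D(r_i(x)^-1_\lambda)\in \mathfrak A_D^-$. Since $\pi_D$ is the orthogonal projection onto $\mathfrak A_D^-$,
\[
\langle u(x), y\rangle_D = \bigl(\phi_D(u(x)),\phi_D(y)\bigr)_D = \bigl(\pi_D\phi_D(u(x)),\phi_D(y)\bigr)_D = \bigl(s_D(x)-r_D(x),\phi_D(y)\bigr)_D.
\]
Substituting the expression from the first part of the corollary,
\[
\langle u(x), y\rangle_{k+pn} = \frac{v^{-2p}}{v-v^{-1}}\sum_{k=1}^m c_k(v)\bigl([_pZ_k],\phi_D(y)\bigr)_D.
\]

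The final step, which I expect to be the only subtle one, is to pass to the limit $p\to\infty$. By Remark~\ref{strongconvergence}, each pairing $([_pZ_k],\phi_D(y))_D$ converges in $\mathbb Q((v^{-1}))$ to an element of $\mathbb Q(v)$; in particular the sequence is bounded in the $v^{-1}$-adic valuation, so multiplication by the factor $v^{-2p}$ sends the whole expression to $0$. Summing over the $n$ residue classes $k$ in the definition of $\langle,\rangle$ gives $\langle u(x),y\rangle = 0$, and since this holds for every $y\in \mathbf U^-1_\lambda$ we conclude that $u(x)$ annihilates $\mathbf U^-1_\lambda$ with respect to $\langle,\rangle$, completing the inductive step and thereby Theorem~\ref{same}.
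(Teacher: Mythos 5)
Your proof is correct and follows essentially the same route as the paper: you read the first identity off Lemma~\ref{uglytechnicallemma} by projecting with $\pi_D$, rewrite $\langle u(x),y\rangle_D$ as $(s_D(x)-r_D(x),\phi_D(y))_D$, and then let the $v^{-2p}$ factor kill the limit via Remark~\ref{strongconvergence}. The only difference is that you make explicit the justification of the step $\langle u(x),y\rangle_D = (\pi_D\phi_D(u(x)),\phi_D(y))_D$ — namely that $\phi_D(y)\in\mathfrak A_D^-$ and $\pi_D$ is the orthogonal projection onto $\mathfrak A_D^-$ — which the paper states without comment.
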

\begin{proof}
Let $y \in \mathbf f$ be a monomial. Then we have
\[
(u,y^-1_\lambda)= \lim_{p \to \infty} \langle
u,y^-1_\lambda \rangle_{k+pn},
\]
and by definition
\begin{equation}
\langle u,y^-1_\lambda \rangle_{k+pn}
=(s_{k+pn}(x)-r_{k+pn}(x),\phi_{k+pn}(y^-1_\lambda))_{k+pn}.
\label{usingprop}
\end{equation}
By the previous proposition,
\[
s_{k+pn}(x)-r_{k+pn}(x) = v^{-2p}\biggl( \sum_{j=1}^mc_j(v)[_pZ_j]
\biggr), \qquad Z_j \in \mathfrak S^{n,n},
\]
and by Remark \ref{strongconvergence}, we know that $([_pZ_j],\phi_{k+pn}(y^-1_\lambda))_{k+pn}$ converges in $\mathbb Q((v^{-1}))$ as $p \to \infty$. Thus the right-hand side of Equation~\ref{usingprop} tends to zero as required.
\end{proof}

\section{Geometric Interpretation} \label{geometric}

Recall from \cite[section 4]{L99} that $\mathfrak A_{D}$ possesses a canonical basis $\mathfrak B_D$ consisting of elements $\{A\}$, $A\in \mathfrak S_{D,n,n}$. To define these elements we must
assume $\mathbf k$ is algebraically closed (either the algebraic closure of $\mathbb F_q$, in which case we must use sheaves in the \'{e}tale topology, or $\mathbb C$ in which case we use the analytic
topology). Fix  $A \in \mathfrak S_{D,n}$, and $\mathbf L \in \mathcal F_{r(A)}$.

The space $\mathcal F^n$ can be given the structure of an ind-scheme such that each set $X_{A}^{\mathbf L}$ (see section \ref{innerfn})                                                                                                                                                                                                                                                                                                                                                                   lies naturally in a projective algebraic variety. This follows from the fact that if we fix $i_0,j_0 \in \mathbb Z$, then the subsets
\[
\mathcal F_{\mathbf b,\mathbf L}^p = \{\mathbf L' \in \mathcal F_{\mathbf b}: \varepsilon^pL_{i_0} \subset L'_{j_0} \subset \varepsilon^{-p}L_{i_0}\}
\]
(for $p=1,2,\ldots$) are naturally projective algebraic varieties each embedded in the next, and for any fixed $A \in \mathfrak S_{D,n,n}$ there is a $p_0 \in \mathbb Z$ such that $X^\mathbf L_A$ is a locally closed subset of $\mathcal F_{\mathbf b,\mathbf L}^p$ for all $p \geq p_0$. Thus its closure $\bar{X}_{A}^{\mathbf L}$ is naturally a projective algebraic variety. Let $\mathcal I^{\mathbf L}_A$ (or sometimes for convenience just $\mathcal I_A$) denote the simple perverse sheaf on $\bar{X}_{A}^{\mathbf L}$ whose restriction to $X_{A}^{\mathbf L}$ is $\mathbb C[d_A]$. Let $\mathcal H^s(\mathcal I^{\mathbf L}_A)$ be the $s$-th cohomology sheaf of $\mathcal I^{\mathbf L}_A$. For $A_1 \in \mathfrak S_{D,n,n}$ such that $X_{A_1}^{\mathbf L} \subset \bar{X}_{A}^{\mathbf L}$ we write $A_1 \leq A$, and set
\[
\Pi_{A_1,A} = \sum_{s \in \mathbb Z} \dim(\mathcal
H^{s-d_{A_1}}_y(\mathcal I^{\mathbf L}_A))v^s \in \mathbb Z[v^{-1}],
\]
where $\mathcal H^{s-d_{A_1}}_y(\mathcal I_A^{\mathbf L})$ is the stalk of $\mathcal H^{s-d_{A_1}}(\mathcal I_A^{\mathbf L})$ at a point $y \in \mathbf X_{A_1}^{\mathbf L}$ (since $\mathcal A^{\mathbf L}$ is constructible with respect to the stratification of $\bar{X}_A^{\mathbf L}$ given by $\{X_{A_1}^{\mathbf L}\colon A_1 < A\}$, this is independent of the choice of $y$). We let
\[
\{A\} = \sum_{A_1; A_1 \leq A}\Pi_{A_1,A}[A_1].
\]

Note that the following is an immediate consequence of the definitions and Lemma~\ref{values}. (It is the analogue for our inner product of \cite[Lemma 7.5]{L99}).
\begin{lemma}
\label{almostortho}
Let $A,A' \in \mathfrak S_{D,n,n}$, then,
\[
(\{A\},\{A'\})_D \in \delta_{A,A'} + v^{-1}\mathbb Z[v^{-1}].
\]
\label{values2} \hfill $\square$
\end{lemma}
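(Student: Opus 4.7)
The plan is straightforward: expand $\{A\}$ and $\{A'\}$ in the basis $\{[B]\colon B\in\mathfrak S_{D,n,n}\}$ of $\mathfrak A_D$ and invoke Lemma~\ref{values}, which makes the inner product diagonal in this basis.

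First I would record two standard properties of the coefficients $\Pi_{A_1,A}$ that follow from the defining axioms of the simple perverse sheaf $\mathcal I_A^{\mathbf L}$: (i) $\Pi_{A,A}=1$, since $\mathcal I_A^{\mathbf L}$ restricted to the open stratum $X_A^{\mathbf L}$ is $\mathbb C[d_A]$, so the only non-zero stalk cohomology of $\mathcal I_A^{\mathbf L}$ at a generic point is $\mathcal H^{-d_A}=\mathbb C$; (ii) $\Pi_{A_1,A}\in v^{-1}\mathbb Z[v^{-1}]$ whenever $A_1<A$, because the stalks $\mathcal H^{k}_y(\mathcal I_A^{\mathbf L})$ at $y\in X_{A_1}^{\mathbf L}$ vanish for $k\geq -d_{A_1}$ by the defining support condition for the intersection cohomology extension. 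Together these say that $\Pi_{A,A}=1$ and every $\Pi_{A_1,A}$ with $A_1<A$ involves only strictly negative powers of $v$.

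Using Lemma~\ref{values} to discard off-diagonal terms, bilinearity gives
\[
(\{A\},\{A'\})_D \;=\; \sum_{\substack{A_1\leq A \\ A_1\leq A'}} \Pi_{A_1,A}\,\Pi_{A_1,A'}\,([A_1],[A_1])_D,
\]
where each $([A_1],[A_1])_D\in 1+v^{-1}\mathbb Z[v^{-1}]$.

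A short case analysis then finishes the argument. If $A=A'$, the index $A_1=A$ contributes $1\cdot 1\cdot([A],[A])_D\in 1+v^{-1}\mathbb Z[v^{-1}]$, while each term with $A_1<A$ contributes $\Pi_{A_1,A}^2\,([A_1],[A_1])_D\in v^{-2}\mathbb Z[v^{-1}]\subset v^{-1}\mathbb Z[v^{-1}]$, so the total lies in $1+v^{-1}\mathbb Z[v^{-1}]$. If $A\neq A'$, then any $A_1$ in the sum must satisfy $A_1<A$ or $A_1<A'$, so at least one of the factors $\Pi_{A_1,A},\Pi_{A_1,A'}$ lies in $v^{-1}\mathbb Z[v^{-1}]$, forcing each summand (and hence the total) into $v^{-1}\mathbb Z[v^{-1}]$. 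There is no genuine obstacle in this proof; the only mild subtlety is verifying property (ii) in the sign/shift conventions used here, which is a routine consequence of the characterisation of $\mathcal I_A^{\mathbf L}$ as a simple perverse sheaf.
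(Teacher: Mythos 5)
Your proof is correct and is exactly the fleshing out of what the paper leaves as an ``immediate consequence of the definitions and Lemma~\ref{values}'': expand $\{A\}$, $\{A'\}$ in the $[A_1]$ basis, use the orthogonality from Lemma~\ref{values}, and use $\Pi_{A,A}=1$ together with $\Pi_{A_1,A}\in v^{-1}\mathbb Z[v^{-1}]$ for $A_1<A$ (the support condition for the simple perverse sheaf $\mathcal I_A$). Same approach as the paper.
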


The algebra $\mathfrak A_{D}$ may be viewed as a convolution algebra of (equivariant) complexes on $\mathcal F^n$. One must be slightly careful here since one cannot (at least straightforwardly) consider convolution on $\mathcal F^n \times \mathcal F^n$ as the ``complexes'' one would then have to consider would have infinite-dimensional support. However, \cite[4.2]{L99} gives one way in which this difficulty can be avoided: given $A, B \in \mathfrak S_{D,n,n}$ we may consider the set:
\[
Z = \{(\mathbf L',\mathbf L'') \in \mathcal F_\mathbf b \times \mathcal F_\mathbf c : \mathbf L' \in \bar{X}_{A}^\mathbf L, \mathbf L'' \in \bar{X}_{B}^{\mathbf L'}\}.
\]
As with $\bar{X}_A^\mathbf L$ we see that $Z$ is naturally a projective variety and the projection $\pi$ to the second factor gives a proper map $Z \to \mathcal F_{\mathbf c}$. The group $G_\mathbf L \subset \text{Aut}(V)$ of automorphisms stabilizing $\mathbf L$ acts on $\mathcal F_\mathbf b^p$ through a quotient which is naturally an algebraic group, and thus it makes sense to speak of $G_\mathbf L$-equivariant perverse sheaves on $\mathcal F_b$ and $Z$. If $\mathcal I$ denotes the middle extension of the constant sheaf on the smooth locus of $Z$, then $\mathcal I$ has a canonical $G_\mathbf L$-equivariant structure, and so by the decomposition theorem its push-forward along $\pi$ is a direct sum of (shifted) perverse sheaves of the form $\mathcal I_C$, ($C \in \mathfrak S_{D,n,n}$). We denote this push-forward by $\mathcal I_A *\mathcal I_B$. If $\mathcal K_{D,n,n}$ denotes the free $\mathcal A$-module on the set $[\mathcal I_A^\mathbf L]$ of isomorphism classes of the sheaves $\mathcal I_A^\mathbf L$ (as $\mathbf L$ runs over a set of $\text{Aut}(V)$-orbit representatives on $\mathcal F$) then the convolution $*$ gives $\mathcal K_{D,n,n}$ an associative $\mathcal A$-algebra structure, which is shown in \cite[4.4]{L99} to be isomorphic to $\mathfrak A_{D,\mathcal A}$ via the map $\Theta$ given by $\Theta([\mathcal I_A]) = \{A\}$. Moreover, Lusztig has shown that the submodule $K_{D,n,n}$ of $\mathcal K_{D,n,n}$spanned by the elements $\{[\mathcal I_A]: A \in \mathfrak S_{D,n,n}^\text{ap}\}$ is precisely the preimage of the subalgebra $\mathbf U_{D,\mathcal A}$, where $\mathfrak S_{D,n,n}^\text{ap}$ is the subset of $\mathfrak S_{D,n,n}$ consisting of those matrices $A \in \mathfrak S_{D,n,n}$ for which, given any $p \in \mathbb Z \backslash \{0\}$, there is a $k \in \mathbb Z$ with $a_{k,k+p} =0$.

\begin{remark}
\label{bartranspose}
Note also that this isomorphism yields the existence of an $\mathcal A$-anti-linear involution on $\mathfrak A_{D,\mathcal A}$ which fixes the basis elements $\{A\}$, ($A \in \mathfrak S_{D,n,n}$), by transporting via $\Theta$ the action of the Verdier duality functor.  We will write this involution as $x \mapsto \bar{x}$. Since it fixes the generators $E_i(D),F_i(D),K_\mathbf a$ it preserves the subalgebra $\mathbf U_{D,\mathcal A}$ and is compatible with the bar involution on $\Ud$ (see \cite[4.13]{L99} for more details). Moreover, Lusztig \cite[Proposition 4.12]{L99} shows that the anti-automorphism $\Psi$ corresponds to the map on $\mathcal K_{D,n,n}$ which sends $[\mathcal I_A]$ to $[\mathcal I_{A^t}]$.
\end{remark}

We wish to give an interpretation of the inner product of section \ref{innerfn} in the context of the algerba $\mathcal K_{D,n,n}$. Suppose that $A,B \in \mathfrak S_{D,n,n}$. We want to describe $(\{A\},\{B\})$. We may assume that $r(A)=r(B)=\mathbf a$ and $c(A)=c(B)=\mathbf b$. Let $\mathbf{L'} \in \mathcal F_{\mathbf b}$. Let $\mathcal I_{A^t}^{\mathbf L'}$ and $\mathcal I_{B^t}^{\mathbf L'}$ denote the simple perverse sheaves on $\bar{X}_{A^t}^{\mathbf{L'}}$ and $\bar{\mathbf X}_{B^t}^{\mathbf{L'}}$ respectively. Then define
\begin{equation}
\langle\mathcal I_A,\mathcal I_B\rangle^D = \sum_{i \in \mathbb Z}\dim(
H^i_c(\mathcal F_{\mathbf a},\mathcal I_{A^t}^{\mathbf L'}\otimes\mathcal I_{B^t}^{\mathbf L'}))v^i.
\label{perversedef}
\end{equation}
(here as usual $\otimes$ denotes the derived tensor product). Clearly $\langle,\rangle^D$ extends to an inner product on the whole of $\mathfrak A_{D}$ (viewed as an algebra of equivariant complexes on $\mathcal F^n$). We want to show that it is the same as the inner product $(,)_D$ of section \ref{innerfn} , at least on the subalgebra $\mathbf U_D$. We start by showing that $\langle,\rangle^D$ satisfies the properties of Proposition~\ref{properties}.

\begin{lemma}
Let $A, B, C \in \mathfrak S_{D,n,n}$, and suppose that
$\mathcal O_A$ is a closed orbit. Then
\[
\langle \mathcal I_A*\mathcal I_{B},\mathcal I_{C}\rangle^D=
v^{d_A-d_{A^t}}\langle\mathcal I_B,\mathcal I_{A^t}*\mathcal I_{C}\rangle^D.
\]
\label{geoad}
\end{lemma}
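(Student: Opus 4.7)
The plan is to mimic, in the geometric setting, the strategy of the proof of Proposition~\ref{property}: both sides of the identity should reduce to cohomology with compact support on a single product space, with the shift $v^{d_A-d_{A^t}}$ arising from the mismatch between the paper's fiber normalizations of $\mathcal I_A$ and $\mathcal I_{A^t}$. The closedness of $\mathcal O_A$ is used first to guarantee that $\mathcal I_A$ is simply the shifted constant sheaf $\mathbb C[d_A]$ on $\mathcal O_A\subset\mathcal F_\mathbf a\times\mathcal F_\mathbf b$, with no IC extension needed, and second to ensure that the two projections $\alpha_1\colon\mathcal O_A\to\mathcal F_\mathbf a$ and $\alpha_2\colon\mathcal O_A\to\mathcal F_\mathbf b$ are proper of fibre dimensions $d_A$ and $d_{A^t}$ respectively.

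Fixing a representative $\mathbf L''\in\mathcal F_\mathbf c$, I unfold the left hand side via proper base change along the pushforward $\pi\colon Z\to\mathcal F_\mathbf c$ that defines the convolution, combined with the projection formula, to obtain
\[
\langle \mathcal I_A*\mathcal I_B,\mathcal I_C\rangle^D
= \sum_i \dim H^i_c\bigl(\mathcal F_\mathbf a\times\mathcal F_\mathbf b,\ \mathcal I_A\otimes p_2^*\mathcal I_{B^t}^{\mathbf L''}\otimes p_1^*\mathcal I_{C^t}^{\mathbf L''}\bigr)\,v^i,
\]
where $p_1,p_2$ are the projections. A parallel manipulation expresses the right hand side as an analogous compact-support cohomology integral on $\mathcal F_\mathbf b\times\mathcal F_\mathbf a$ with $\mathcal I_{A^t}$ in place of $\mathcal I_A$ and with the roles of the outer factors exchanged. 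The swap homeomorphism $\tau\colon\mathcal F_\mathbf a\times\mathcal F_\mathbf b\to\mathcal F_\mathbf b\times\mathcal F_\mathbf a$ transports the two outer pull-back factors correctly, since $p_1'\circ\tau=p_2$ and $p_2'\circ\tau=p_1$, so the entire comparison reduces to identifying $\tau^*\mathcal I_{A^t}$ with $\mathcal I_A$ as sheaves on $\mathcal F_\mathbf a\times\mathcal F_\mathbf b$. Both are constant (up to shift) on $\mathcal O_A$, but the paper's fibre conventions $\mathcal I_A^{\mathbf L}=\mathbb C[d_A]$ and $\mathcal I_{A^t}^{\mathbf L'}=\mathbb C[d_{A^t}]$ force a relative shift whose contribution in the $v$-graded trace is exactly the desired factor $v^{d_A-d_{A^t}}$; the combinatorial consistency of this shift across the three sheaves is guaranteed by $(d_A-d_{A^t})+(d_B-d_{B^t})=(d_C-d_{C^t})$, which follows from Equation~\eqref{difference} applied to the compatible triple with $r(A)=r(C)$, $c(A)=r(B)$, $c(B)=c(C)$.

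The main technical obstacle is careful bookkeeping of the perverse-sheaf normalization shifts in the presence of infinite-dimensional ind-schemes: because $\mathcal F_\mathbf a,\mathcal F_\mathbf b,\mathcal F_\mathbf c$ have no absolute dimension, all shifts have to be tracked relatively to the orbit $\mathcal O_A$. The closedness assumption on $\mathcal O_A$ is what makes this tracking unambiguous at the two key points of the argument: it makes $\mathcal I_A$ a genuinely shifted constant sheaf on an actual closed subvariety, and it makes the convolution projection $\pi$ proper so that proper base change applies in the first step without any further qualification.
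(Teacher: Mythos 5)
Your overall strategy — unfold both sides of the identity to a single compactly supported cohomology group and extract the $v$-shift from a normalization mismatch — is indeed what the paper does. But the specific unfolding you write down has the shift on the wrong side, and as a result your argument would actually produce the reciprocal $v^{d_{A^t}-d_A}$ rather than $v^{d_A-d_{A^t}}$.

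Here is the issue. Fix $\mathbf L^0 \in \mathcal F_{\mathbf c}$ and consider $Z = \{(\mathbf L,\mathbf L')\in \mathcal O_A : \mathbf L' \in \bar X_{B^t}^{\mathbf L^0}\}$, which is the convolution variety one pushes forward along $p_1$ to compute $(\mathcal I_A*\mathcal I_B)^t$ on $\mathcal F_{\mathbf a}$. Since $\mathcal O_A$ is closed the projection $p_2\colon \mathcal O_A \to \mathcal F_{\mathbf b}$ is smooth of relative dimension $d_{A^t}$ (the fibre over $\mathbf L'$ is $X_{A^t}^{\mathbf L'}$), so $Z$ is a smooth fibration over $\bar X_{B^t}^{\mathbf L^0}$ and its IC sheaf is $p_2^*(\mathcal I_{B^t}^{\mathbf L^0})[d_{A^t}]$. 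Thus the left-hand side of the lemma reduces, after projection formula, to $v^{-d_{A^t}}\sum_i\dim H_c^i(Z, p_2^*\mathcal I_{B^t}\otimes p_1^*\mathcal I_{C^t})v^i$. Your proposed integrand $\mathcal I_A\otimes p_2^*\mathcal I_{B^t}\otimes p_1^*\mathcal I_{C^t}$ with $\mathcal I_A=\underline{\mathbb C}_{\mathcal O_A}[d_A]$ instead restricts on the smooth locus of $Z$ to $\mathbb C[d_A+d_{B^t}]\otimes p_1^*\mathcal I_{C^t}$, which is off from the correct IC by $[d_A-d_{A^t}]$; this is precisely the $\mathbb C[d_A]$ fibrewise normalization of $\mathcal I_A^{\mathbf L}$ substituted where the $\mathbb C[d_{A^t}]$ fibrewise normalization of $\mathcal I_{A^t}^{\mathbf L'}$ (seen by the $p_2$-fibration) is required. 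Symmetrically, on the right-hand side the relevant smooth projection is $p_1$ with relative dimension $d_A$, so the correct shift there is $[d_A]$, again opposite to the $[d_{A^t}]$ coming from $\mathcal I_{A^t}=\underline{\mathbb C}_{\mathcal O_{A^t}}[d_{A^t}]$. Carrying out your computation as written therefore yields $\langle \mathcal I_A*\mathcal I_B,\mathcal I_C\rangle^D = v^{d_{A^t}-d_A}\langle\mathcal I_B,\mathcal I_{A^t}*\mathcal I_C\rangle^D$, the inverse of the statement. The fix is to drop the pullback of the fibrewise $\mathcal I_A$ and instead read off the shift from the smooth fibration over the factor carrying the $B^t$ (respectively $C^t$) sheaf, exactly as the paper does via the two auxiliary varieties $Z_A$ and $Z_{A^t}$.

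One further remark: the additive identity $(d_A-d_{A^t})+(d_B-d_{B^t})=(d_C-d_{C^t})$ coming from Equation~\eqref{difference}, which you invoke for bookkeeping consistency, is not used in the geometric proof; it is needed only in the elementary proof of Proposition~\ref{property}. Here the two exponents $d_{A^t}$ and $d_A$ are produced directly by the relative dimensions of the two smooth projections from $\mathcal O_A$, with no further combinatorial cancellation required.
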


\begin{proof}
Both sides are obviously zero unless $r(A) = r(C)= \mathbf a$, $c(A)=r(B)=\mathbf b$ and $c(B)=c(C)=\mathbf c$, thus we assume these equalities from now on. Pick $\mathbf L^0 \in \mathcal F_{\mathbf c}$, and pick a subset $Y=\mathcal F_{\mathbf b,\mathbf L^0}^p$ of $\mathcal F_{\mathbf b}$ large enough so that such that $Y$ is a smooth projective variety containing $X_{B^t}^{\mathbf L_0}$. Let
\[
Z_A= \{(\mathbf L, \mathbf L') \in \mathbf\mathcal O_{A} \colon \mathbf L' \in Y\},
\]
We have maps $p_1\colon Z_A \to \mathcal F_{\mathbf a}$ and $p_2\colon Z \to Y$, the first and second projections respectively. The map $p_1$ is clearly proper (as the fibre is $X_A^{\mathbf L} \cap Y$) and the map $p_2$ is smooth with fibre dimension $d_{A^t}$. It follows that the complex $\mathcal I$ used in the definition of $\mathcal I_A*\mathcal I_B$ is the pull-back $p_2^*(\mathcal I_{B^t})[d_{A^t}]$, and hence we have
\[
(\mathcal I_A*\mathcal I_B)^t = (p_1)_!p_2^*(\mathcal I_{B^t})[d_{A^t}],
\]
and hence we have
\begin{equation}
\begin{split}
(\mathcal I_A*\mathcal I_B)^t\otimes \mathcal I_{C^t}&= (p_1)_!p_2^*(\mathcal I_{B^t})[d_{A^t}]\otimes \mathcal I_{C^t} \\
&= (p_1)_!(p_2^*(\mathcal I_{B^t})\otimes
p_1^*(\mathcal I_{C^t})[d_{A^t}]),
\end{split}
\end{equation}
where we use the projection formula in the second equality.

On the other hand, to compute the product $\mathcal I_{A^t}*\mathcal I_C$ we may similarly pick a smooth projective variety $W \subset
\mathcal F_{\mathbf a}$ which contains $X^{\mathbf L^0}_{C^t}$, and consider the variety
\[
Z_{A^t} = \{(\mathbf L, \mathbf L') \in \mathcal O_A \colon \mathbf L \in W\}.
\]
As above there are projection maps $p_1$,$p_2$, and the product is given by
\[
(\mathcal I_{A^t}*\mathcal I_{C})^t = (p_2)_!p_1^*(\mathcal I_{C^t})[d_{A}].
\]
so that
\[
\begin{split}
\mathcal I_{B^t}\otimes (\mathcal I_{A^t}*\mathcal I_{C})^t &= \mathcal I_{B^t} \otimes(p_2)_!p_1^*(\mathcal I_{C^t})[d_{A^t}] \\
&= (p_2)_!(p_2^*(\mathcal I_{B^t})\otimes p_1^*(\mathcal I_{C^t}))[d_{A}]
\end{split}
\]
where we again use the projection formula. Now since tensor product is local, we may restrict to $Z_A \cap Z_{A^t}$, and then it is clear that both inner products are given by the compactly supported cohomologies of $p_2^*(\mathcal I_{B^t})\otimes p_1^*(\mathcal I_{C^t})$ up to shift, with the difference in shifts being $d_A - d_{A^t}$ as required.

\end{proof}

\begin{lemma}
Let $A, B \in \mathfrak S_{D,n,n}$, and $\mathbf c \in \mathfrak
S^n$. Then
\begin{enumerate}
\item $\langle E_i\{A\},\{B\}\rangle^D= \langle\{A\},vK_{\mathbf
i}F_i\{B\}\rangle^D$.
\item $\langle F_i\{A\},\{B\}\rangle^D = \langle\{A\},vK_{-\mathbf i}E_i\{B\}\rangle^D$
\item $\langle K_\mathbf c\{A\},\{B\}\rangle^D=
\langle\{A\},K_\mathbf c\{B\}\rangle^D$
\end{enumerate}
\end{lemma}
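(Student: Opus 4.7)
My plan is to reduce each of the three identities to Lemma~\ref{geoad}. Recall that
\[
E_i(D)=\sum_{\mathbf a\cup_i\mathbf{a'}}[{}_{\mathbf a}\mathbf e_{\mathbf{a'}}],\quad F_i(D)=\sum_{\mathbf a\cup_i\mathbf{a'}}[{}_{\mathbf{a'}}\mathbf f_{\mathbf a}],\quad K_{\mathbf c}(D)=\sum_{\mathbf b}v^{\mathbf c\cdot\mathbf b}[\mathbf i_{\mathbf b}],
\]
so each generator acts by convolution with an $\mathcal A$-linear combination of basis elements $[M]$ attached to matrices of these three special forms. The key point is that for any such $M$ the orbit $\mathcal O_M$ is closed, hence $\{M\}=[M]$ and Lemma~\ref{geoad} applies verbatim.

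First I would verify closedness. The orbit $\mathcal O_{\mathbf i_{\mathbf b}}$ is the diagonal in $\mathcal F_{\mathbf b}\times\mathcal F_{\mathbf b}$, obviously closed. For $M={}_{\mathbf a}\mathbf e_{\mathbf{a'}}=\mathbf i_{\mathbf a}-E^{i,i}+E^{i,i+1}$ with $\mathbf L\in\mathcal F_{\mathbf a}$ fixed, the set $X_M^{\mathbf L}\subset\mathcal F_{\mathbf{a'}}$ is cut out by the equalities $L'_k=L_k$ for $k\not\equiv i\pmod n$ and the inclusion $L_{i-1}\subset L'_i\subset L_i$, which are all closed conditions; the remaining dimension constraint is automatic from $|\mathbf{L'}|=\mathbf{a'}$, so $X_M^{\mathbf L}$ is closed (and in fact isomorphic to the projective space of hyperplanes in $L_i/L_{i-1}$). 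The case ${}_{\mathbf{a'}}\mathbf f_{\mathbf a}$ is symmetric. (This is a genuinely Schur-algebraic phenomenon: the fixed dimension constraint $|\mathbf L'|=\mathbf{a'}$ excludes the degeneration $L'_i=L_i$ that would spoil closedness in a full-flag setting.)

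Next I would apply Lemma~\ref{geoad} term by term. For (3), $\mathbf i_{\mathbf b}^t=\mathbf i_{\mathbf b}$ and $d_{\mathbf i_{\mathbf b}}=0$, so the scalar in Lemma~\ref{geoad} equals $1$ and summing against $v^{\mathbf c\cdot\mathbf b}$ yields the identity. For (1), the relations $({}_{\mathbf a}\mathbf e_{\mathbf{a'}})^t={}_{\mathbf{a'}}\mathbf f_{\mathbf a}$ and $d_{{}_{\mathbf a}\mathbf e_{\mathbf{a'}}}-d_{{}_{\mathbf{a'}}\mathbf f_{\mathbf a}}=1+\mathbf i\cdot\mathbf{a'}$, already recorded in the proof of Corollary~\ref{properties}, give
\[
\langle E_i\{A\},\{B\}\rangle^D=\sum v^{1+\mathbf i\cdot\mathbf{a'}}\langle\{A\},[{}_{\mathbf{a'}}\mathbf f_{\mathbf a}]\{B\}\rangle^D.
\]
Since $[{}_{\mathbf{a'}}\mathbf f_{\mathbf a}]\{B\}$ has row grading $\mathbf{a'}$, the operator $K_{\mathbf i}$ acts on it by the scalar $v^{\mathbf i\cdot\mathbf{a'}}$, so $vK_{\mathbf i}F_i\{B\}=\sum v^{1+\mathbf i\cdot\mathbf{a'}}[{}_{\mathbf{a'}}\mathbf f_{\mathbf a}]\{B\}$ and the two sides of (1) agree. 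Part (2) is analogous: the same transpose identity reads $d_{{}_{\mathbf{a'}}\mathbf f_{\mathbf a}}-d_{{}_{\mathbf a}\mathbf e_{\mathbf{a'}}}=-1-\mathbf i\cdot\mathbf{a'}$, and $\mathbf a=\mathbf{a'}+\mathbf i$ together with $\mathbf i\cdot\mathbf i=2$ rewrites this exponent as $1-\mathbf i\cdot\mathbf a$, which is precisely the factor produced by $vK_{-\mathbf i}$.

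The main obstacle is the closedness check for ${}_{\mathbf a}\mathbf e_{\mathbf{a'}}$; once it is granted, the argument reduces to tracking $v$-exponents and invoking Lemma~\ref{geoad}.
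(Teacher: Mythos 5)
Your proof is correct and follows the same route as the paper's: reduce to Lemma~\ref{geoad} by observing that $\{M\}=[M]$ for the matrices $M$ occurring in $E_i,F_i,K_{\mathbf c}$ (because the corresponding orbits are closed), then track the $v$-exponents exactly as in Corollary~\ref{properties}. The paper's proof is a one-liner asserting the closedness of $X^{\mathbf L}_{_{\mathbf a+\mathbf i}\mathbf e_{\mathbf a}}$ and pointing to Corollary~\ref{properties}; you have simply filled in the details of both steps, and your identification of these sets with projective spaces of hyperplanes in $L_i/L_{i-1}$ is a clean way to see both the closedness and the smoothness needed for $\{M\}=[M]$.
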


\begin{proof}
This follows from the previous lemma exactly as in the proof of corollary \ref{properties}, since the varieties $X^{\mathbf L}_{_{\mathbf a + \mathbf i}\mathbf e_{\mathbf a}}$ are closed.
\end{proof}

The algebra $\mathbf U_D$ is spanned by elements of the form $T_1T_2\ldots T_N[\mathbf{i_a}]$ where $T_s$ is either $E_i$ or $F_i$ for some $i$. Thus in order to show that the inner products $(\,)_D$ and $\langle,\rangle^D$ coincide via the isomorphism the previous lemma shows we need only
check that
\[
\langle T_1T_2\ldots T_N[\mathbf{i_a}],[\mathbf{i_a}]\rangle^D =
(T_1T_2\ldots T_N[\mathbf{i_a}],[\mathbf{i_a}])_D
\]
But this will follow if we can show that
\[
\langle\{A\},[\mathbf{i_a}]\rangle^D = (\{A\},[\mathbf{i_a}])_D
\]
for all $A \in \mathfrak S_{D,n,n}$, as $\{\{A\}\colon A \in \mathfrak S_{D,n,n}\}$ is a basis of $\mathfrak A_D$. But as the simple perverse sheaf corresponding to $\{\mathbf{i_a}\}=[\mathbf{i_a}]$ is just the skyscraper sheaf at the point $\mathbf{L'}$, hence this last equality follow from directly from the definitions. We have therefore shown the following result.

\begin{prop}
On the algebra $\mathbf U_D$ the inner products $\langle,\rangle^D$ and $(,)_D$ coincide. \hfill $\square$ \label{same2}
\end{prop}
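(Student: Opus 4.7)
The strategy is a spanning argument combined with matching adjunction identities on both sides, which reduces everything to a single evaluation on an idempotent $[\mathbf i_{\mathbf a}]$. That evaluation is then transparent because the perverse sheaf attached to $[\mathbf i_{\mathbf a}]$ is a skyscraper.

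First I would observe that $\mathbf U_D$ is $\mathbb Q(v)$-spanned by vectors $T_1 T_2 \cdots T_N [\mathbf i_{\mathbf a}]$ with $T_s \in \{E_i(D), F_i(D)\}$ and $\mathbf a \in \mathfrak S_{D,n}$. Corollary~\ref{properties} (for $(\,,\,)_D$) and the preceding adjunction lemma (for $\langle,\rangle^D$) give \emph{identical} rules for moving such a generator across the pairing: the same involution $E_i \leftrightarrow vK_{\mathbf i}F_i$, $F_i \leftrightarrow vK_{-\mathbf i}E_i$, $K_{\mathbf c} \leftrightarrow K_{\mathbf c}$, decorated with the same $v$-twists. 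Iterating this, the claim for two arbitrary spanning vectors reduces to
\[
\langle \{A\}, [\mathbf i_{\mathbf a}]\rangle^D \;=\; (\{A\}, [\mathbf i_{\mathbf a}])_D, \qquad A \in \mathfrak S_{D,n,n},
\]
since both pairings are symmetric and $\{\{A\} : A \in \mathfrak S_{D,n,n}\}$ is a basis of $\mathfrak A_D$.

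For this base case, the orbit $\mathcal O_{\mathbf i_{\mathbf a}}$ is the diagonal of $\mathcal F_{\mathbf a}\times \mathcal F_{\mathbf a}$ and $d_{\mathbf i_{\mathbf a}} = 0$, so $[\mathbf i_{\mathbf a}] = \{\mathbf i_{\mathbf a}\}$ corresponds to the skyscraper sheaf $\mathbb C_{\mathbf L'}$ at the single point $\mathbf L' \in \mathcal F_{\mathbf a}$. The derived tensor product in the definition \eqref{perversedef} then collapses, leaving the graded dimension of the stalk of $\mathcal I_{A^t}^{\mathbf L'}$ at $\mathbf L'$. On the function side, expanding $\{A\} = \sum_{A_1 \leq A} \Pi_{A_1, A}[A_1]$ and using $([A_1], [\mathbf i_{\mathbf a}])_D = \delta_{A_1, \mathbf i_{\mathbf a}}$ (immediate from $d_{\mathbf i_{\mathbf a}} = 0$ together with $\eta^{\mathbf i_{\mathbf a}}_{\mathbf i_{\mathbf a}, \mathbf i_{\mathbf a}} = 1$) one obtains $\Pi_{\mathbf i_{\mathbf a}, A}$, which by its very definition is the graded dimension of the same IC stalk; the apparent discrepancy ($\mathcal I_{A^t}^{\mathbf L'}$ versus $\mathcal I_{A}^{\mathbf L}$) is absorbed by the $G$-equivariant identification of the simple perverse sheaf on $\bar{\mathcal O}_A \subset \mathcal F^n \times \mathcal F^n$ restricted to either slice, in the spirit of Remark~\ref{bartranspose}.

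The main obstacle is really bookkeeping rather than substance: one has to verify that the adjunction rules on the two sides match on the nose (same powers of $v$, same anti-automorphism $\rho$), and that the IC stalk appearing in the geometric computation via $\mathcal I_{A^t}^{\mathbf L'}$ is correctly identified with the one defining $\Pi_{\mathbf i_{\mathbf a}, A}$ via $\mathcal I_A^{\mathbf L}$ on the function side. Once these two points are pinned down --- the first from Corollary~\ref{properties} and its geometric counterpart, the second from the transpose/$\Psi$ compatibility of Remark~\ref{bartranspose} --- the rest is a one-line spanning argument.
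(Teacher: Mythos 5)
Your argument follows the paper's own proof precisely: span $\mathbf U_D$ by elements $T_1\cdots T_N[\mathbf{i_a}]$, use the matching adjunction rules (Corollary~\ref{properties} and Lemma~\ref{geoad}) to push everything to the pairing against $[\mathbf{i_a}]$, expand in the basis $\{\{A\}\}$, and evaluate using the fact that $\{\mathbf{i_a}\}=[\mathbf{i_a}]$ corresponds to the skyscraper sheaf. The extra care you take in identifying the stalk of $\mathcal I_{A^t}^{\mathbf L'}$ at $\mathbf L'$ with the stalk of $\mathcal I_A^{\mathbf L}$ at $\mathbf L$ via the transpose swap of $\bar{\mathcal O}_A$ is exactly the point the paper compresses into ``directly from the definitions,'' and your reasoning there is correct.
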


\begin{remark}
It can be shown that the algebra $\mathfrak A_D$ is generated by the elements $\{A\}$ for which $X_A^{\mathbf L}$ is closed, and so the above argument adapts to show that the inner products in fact agree on the whole of $\mathfrak A_D$, but we will not need this. Henceforth we will use the notation $(,)_D$ when referring to the inner product on $\mathfrak A_D$ in either of its incarnations.
\end{remark}

We now a second proof of the agreement of the limit of the inner products on $\mathbf U_D$ with Lusztig's inner product on $\Ud$. Recall that Theorem \ref{Udinnerproduct} characterises the inner product by three properties, the first two of which are evident for our limiting inner product. The difficulty then is establishing the third property, which relates the inner product on $\Ud$ to that on $\mathbf f$. We give a proof of this property, which while less elementary than the proof in Section \ref{comparison} is more conceptual. In the remainder of this section we will assume that our base field $\mathsf k$ is the field of complex numbers $\mathbb C$, and work with sheaves in the analytic topology. 

We first need to recall the geometric construction of the algebra $\mathbf f$ and its inner product (at least in the case of the cyclic quiver). Let $Q$ be the cyclic quiver $1 \to 2 \to \ldots \to n \to 1$. A representation of $Q$ is a $\mathbb Z/n\mathbb Z$-graded vector space $W= \bigoplus_{i \in \mathbb Z/n\mathbb Z} W_i$ equipped with linear maps $y_i \colon W_i \to W_{i+1}$ (where $i \in \mathbb Z/n\mathbb Z$). The space of such representations is denoted $E_W$. Such a representation is \textit{nilpotent} if there is an $N>0$ such that all compositions of $y_i$s of length greater than $N$ are equal to zero, and we write $E_W^{\text{nil}}$ for the subvariety of nilpotent representations. The group $G_W = \prod_{i \in \mathbb Z/n\mathbb Z} \text{GL}(V_i)$ acts on $E_W^{\text{nil}}$ with finitely many orbits. The algebra $\mathbf f$ associated to the Cartan datum of affine type $\widehat{\mathfrak{sl}}_n$ is then given as a convolution algebra of semisimple perverse sheaves which are $G_W$-equivariant and are supported on $E_V^{\text{nil}}$, thus since the stabilizer of a nilpotent representation is connected, the simple objects are labelled by the $G_W$-orbits on $E_W^{\text{nil}}$, that is, by the isomorphism classes of nilpotent representations. 

Given a pair $(t,m) \in \mathbb Z\times \mathbb Z_{\geq 0}$ we have a representation $V_{t,m}$ of the cyclic quiver with basis $\{e_j: t \leq j \leq t+p-1\}$ where $e_j$ has degree $j \text{ mod } n$, and $e_t \to e_{t+1} \to \ldots e_{t+p-1} \to 0$. The representations $V_{t,p}$ are a complete set of representatives for the isomorphism classes of indecomposable nilpotent representations of the cyclic quiver, thus since any nilpotent representation is a direct sum of indecomposables, we can record the isomorphism class of any such representation by a tableau $(\mu_{t,p})_{t,p \in \mathbb Z}$ where the entry $\mu_{t,p}$ records the multiplicity of $V_{t,m}$ in the representation.  We therefore have a natural parametrization of the canonical basis $\mathbf B$ by tableaux $(\mu_{t,p})_{t \in \mathbb Z,p \in \mathbb N}$, where $\mu_{t,p} \in \mathbb N$ and $\mu_{t,p} = \mu_{t-n,p}$ for all $t \in \mathbb Z$ and for fixed $t$ only finitely many of the $\mu_{t.p}$ are nonzero.

If $\dim(W_i) = \nu_i$, ($i \in \mathbb Z/n\mathbb Z$), the orbits of $G_W$ on $E_W^{\text{nil}}$ correspond to the isomorphism classes of nilpotent representations of dimension $\nu$ hence they are labelled by the set $\Sigma_\nu$ consisting of those tableaux $(\mu_{t,p})$ for which 
\[
\sum_{t,p; t \leq k < t+p} \mu_{t,p} = \nu_k.
\]
and so this same set indexes the $\nu$-homogeneous part of $\mathbf f$.

The inner product on $\mathbf f$ is defined in \cite[\S 12.2]{L93}. Let $A_1,A_2$ be $G_W$-equivariant simple perverse sheaves on $E_W^{\text{nil}}$. Note that the definition there is simply an explicit calculation of 
\begin{equation}
\label{finnerp}
(A_1,A_2) = \sum_{j \in\mathbb Z} \dim(H^{j}_{G,c}(A_1\otimes A_2))v^{-j}. 
\end{equation}
where $H^*_{G,c}$ denotes equivariant cohomology with compact supports (see below for more details). We thus briefly review some basics of the equivariant derived category. 

\begin{definition}
Let $X$ be a variety with a $G$-action (or more compactly, a $G$-variety). A \textit{resolution} of $X$ is a map $p \colon P \to X$ where $P$ is smooth $G$-variety on which $G$ acts freely (so that $\bar{P} = P/G$ is a smooth variety also). Let $\pi \colon P \to \bar{P}$ denote the quotient map. The category $D^b_G(X,P)$ consists of triples $(\mathcal F,\mathcal G, \phi)$ where $\mathcal F$ is an object in $D^b(\bar{P})$ and $\mathcal G$ is an object in $D^b(X)$ and $\phi\colon \pi^*(\mathcal F) \to p^*(\mathcal G)$ is an isomorphism. 
\end{definition}
\noindent
We will also need to recall the notion of an $n$-acylic map.
\begin{definition}
A map $f\colon Y \to X$ is said to be $n$-acyclic if
\begin{itemize}
\item
For any sheaf $F$ on $Y$ the adjunction morphism $B \to R^0f_*f^*(B)$ is an isomorphism, and $R^if_*f^*(F) = 0$ for $0<i\leq n$;
\item
For any base change $\tilde{X} \to X$ the induced map $\tilde{f}\colon \tilde{Y} = Y\times_X\tilde{X} \to \tilde{X}$ has the above property.
\end{itemize}
If we write $\tau_{\leq n}$ for the truncation functor on the derived category $D^b(Y)$, then the first condition may be re-written as saying that the adjuction map $F\to \tau_{\leq n}Rf_*f^*(F)$ in $D^b(Y)$ is an isomorphism for any sheaf $F$ (thought of as an complex in $D^b(Y)$ concentrated in degree $0$).
\end{definition}

For sufficiently acyclic resolutions $P$ (\textit{i.e.} resolutions $p\colon P \to X$ with $p$ an $n$-acyclic map for $n$ large), the cohomologies of objects in the category $D^b_G(X,P)$ can be used to calculate the cohomologies in $D_G^b(X)$ as indeed Bernstein and Lunts take a limit of resolution of $X$ to obtain their definition of the equivariant derived category. The construction in \cite[\S12.2]{L93} gives an explicit construction of a collection of $G$-resolutions of a variety which can be made arbitrarily highly connected (and hence his definition is the same as that of Equation \ref{finnerp}) however for our comparison result we need a more flexible context. 

To compare the inner products on $\mathbf f$ and $\mathbf U_D$ we need to recall Lusztig's construction \cite{L99} relating the geometry of periodic lattices to the cyclic quiver. Suppose that $\mathbf L$ is a fixed lattice, and $\mathbf a \in \mathfrak S^n$ is such that $\dim(V_i) = a_i$ (where on the left-hand side of this equality $i$ is understood to be taken modulo $n$). Consider the spaces:
\begin{itemize}
\item
$\mathcal X^{\mathbf L}_{\mathbf a,\nu} = \{\mathbf L' \in \mathcal F_{\mathbf a}: L'_i \subseteq L_i, \dim(L_i/L'_i)= \nu_i, \forall i \in \mathbb Z\}$
\item
$\tilde{\mathcal X}^\mathbf L_{\mathbf a, \nu} = \{(\mathbf L',(\phi_i)_{i \in \mathbb Z}: \mathbf L' \in \mathcal X^\mathbf L_{\mathbf a,\nu}, \phi_i \colon L_i/L'_i \to V_i \text{ an isomorphism}\}$
\item
$\mathcal U_\mathbf a \subset E_W^{\text{nil}}$ consists of those representations with label $(\mu_{t,p})$ such that 
\[
\mu_{t,1}+\mu_{t,2} + \ldots \leq a_t, \quad \forall t \in \mathbb Z.
\]
\end{itemize}

Both $\mathcal X^\mathbf L_{\mathbf a,\nu}$ and $\tilde{\mathcal X}^\mathbf L_{\mathbf a,\nu}$ can be given a natural structure of algebraic variety (with $\mathcal X^{\mathbf L}_{\mathbf a,\nu}$ projective), in the same fashion as for $\bar{X}_\mathbf L^A$ above, and the variety $\mathcal U_\mathbf a$ is an open subset of $E_W^{\text{nil}}$ (see \cite[Lemma 5.8]{L99}). We then have the following correspondence:

\xymatrix{ & & & & & &  \mathcal X^{\mathbf L}_{\mathbf a, \nu} & \ar[l]_{\alpha} \tilde{\mathcal X}^{\mathbf L}_{\mathbf a,\nu} \ar[r]^{\beta} & \mathcal U_{\mathbf a}
}
\noindent
where the map $\alpha$ is given by $(\mathbf L, \phi) \mapsto \mathbf L$, while the map $\beta$ is given by sending $(\mathbf L,\phi)$ to the element $(y_i) \in E_V^{\text{nil}}$ where $y_i$ given by the composition:

\xymatrix{& & & & &  V_i \ar[r]^{\phi_i^{-1}} & L_i/L'_i \ar[r] & L_{i+1}/L'_{i+1} \ar[r]^{\phi_{i+1}} & V_{i+1}}

\noindent
with the middle map induced by the inclusion $L_i \subseteq L_{i+1}$ (the point $(x_i)_{i \in \mathbb Z/n\mathbb Z}$ is automatically nilpotent as a representation of $Q$ by the periodicity of the flags $\mathbf L,\mathbf L'$). The map $\alpha$ is clearly a principal $G_W$-bundle, while the map $\beta$ is smooth with connected fibres of dimension $\sum_{1\leq i \leq n} a_i\nu_i$ (see \cite[Lemma 5.11]{L99}). 

Notice that if $\mathbf a$ has $a_i$ large enough for all $i$, then we have $E_W^{\text{nil}} = \mathcal U_\mathbf a$. In what follows we will always assume that this is the case. Moreover, the groups $G_\mathbf L$ (that is, the group of automorphisms of $V$ which preserve the lattice $\mathbf L$) and $G_W$ act naturally on $\tilde{\mathcal X}^{\mathbf L}_{\mathbf a,\mathbf b}$, making the maps $\alpha$ and $\beta$ equivariant (for the actions of $G_\mathbf L$ on $\mathcal X_{\mathbf a,\mathbf L}$ and $G_W$ on $E_W^{\text{nil}}$). Thus since $\tilde{\mathcal X}^{\mathbf L}_{\mathbf a,\nu}$ is free $G_W$-space (using the map $\alpha$) it is a resolution of $E_W^{\text{nil}}$.

Now Lusztig has shown in \cite[\S 5]{L99} that if $b$ is an element of the canonical basis which corresponds to the simple perverse sheaf $P$ on $E_W^{\text{nil}}$ with associated $G_W$-orbit corresponding to the tableau $(\mu_{t,p})$ then $\phi_D(b)[\mathbf i_\mathbf a]\in \mathfrak A_{D,n,n}$ is the element $\{B\}$ where $b_{i,i+j} = \mu_{i,j}$ and $b_{ii} = a_i - \sum_{p>0} \mu_{i,p}$, and $b_{ij} = 0$ if $i>j$. Moreover, we have
\[
\alpha^*(\mathcal I_B) \cong \beta^*(P)
\]
Picking an isomorphism $\theta$ (which is unique up to a scalar since $A$ is simple) we therefore obtain an element $\tilde{P} = (\alpha^*(\mathcal I_B), P, \theta)$ of $D^b_G(E_W^{\text{nil}},\tilde{\mathcal X}_{\mathbf a,\nu}^\mathbf L)$.
 Thus if $b_1,b_2$ are elements of $\mathbf B_\nu$ with associated tableau $(\mu_{t,p})$ and $(\rho_{t,p})$, and $P_1,P_2$ the corresponding perverse sheaves on $E_W^{\text{nil}}$, and $B_1$ and $B_2$ are the associated elements of $\mathfrak S^-_{D,n,n}$ then we may choose elements $\tilde P_k = (\mathcal I_{B_k},P_k,\theta_k)$ (where $k = 1,2$) in the category $D^b_G(E_W^{\text{nil}},\tilde{\mathcal X}_{\mathbf a,\nu}^{\mathbf L})$
\[
(\mathcal I_A,\mathcal I_B)_D = \sum_{j \in \mathbb Z} H^j_{G,\tilde{\mathcal X}^{\mathbf L}_{\mathbf a,\nu},c}(\tilde P_1,\tilde P_2)v^{-j}
\]

It follows that if we consider $\mathbf a' = \mathbf a + p\mathbf b_0$ instead of $\mathbf a$ for larger and larger $p$, this inner product will converge to $(b_1,b_2)$ provided the resolutions $\tilde{\mathcal X}_{\mathbf a',\nu}^\mathbf L$ become more and more highly connected as the $a'_i \to \infty$. Thus the compatibility of the inner products is reduced to showing that the maps $\beta \colon \tilde{X}_{\mathbf a,\nu}^\mathbf L \to E_W^{\text{nil}}$ is $k$-connected where $k \to \infty$ as $\text{min}\{a_i\} \to \infty$. The rest of this section will be devoted to a proof of this result. 

We wish to use a general Lemma which gives a criterion for a map to be $n$-acyclic. Since we cannot find a precise reference for what we need, we sketch briefly the result, though it is presumably well-known to the experts. The statement is a version of the Vietoris-Begle theorem proved in \cite[Prop. 2.7.8]{KS}. 

\begin{lemma}
\label{nacyclic}
Suppose we have a map $f\colon Y \to X$ which has $k$-connected fibres, and that we may exhaust $Y = \bigcup_{n}Y_n$, by closed subsets $Y_n$ such that $Y_n \subset \text{Int}(Y_{n+1})$ and the restriction of $f$ to $Y_n$ is proper with $k$-connected fibres for all $n$, then $\tau_{\leq k}Rf_*\circ f^*\cong \text{id}$. 
\end{lemma}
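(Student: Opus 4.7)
My approach would be to verify the assertion stalkwise and reduce, via the exhaustion, to the proper case already handled by the classical Vietoris-Begle theorem. The statement $\tau_{\leq k}Rf_*\circ f^* \cong \text{id}$ is equivalent to requiring that for every sheaf $F$ on $X$, the adjunction unit $F \to \tau_{\leq k}Rf_*f^*F$ be an isomorphism. Since this can be checked at stalks, fix $x \in X$; it suffices to show that $F_x \to (f_*f^*F)_x$ is an isomorphism and that $(R^if_*f^*F)_x = 0$ for $1 \leq i \leq k$.

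For each $n$, the restriction $f_n = f|_{Y_n}\colon Y_n \to X$ is proper with $k$-connected fibres, so Kashiwara-Schapira Proposition 2.7.8 gives $\tau_{\leq k}R(f_n)_*f_n^*F \cong F$. Equivalently, by proper base change, $(R^i(f_n)_*f_n^*F)_x \cong H^i(Y_n \cap f^{-1}(x), F_x)$, which equals $F_x$ for $i=0$ and vanishes for $1 \leq i \leq k$, uniformly in $n$, thanks to the $k$-connectedness of $Y_n \cap f^{-1}(x)$. To bridge from the proper tower to $Rf_*$ itself, let $j_n\colon Y_n \hookrightarrow Y$ denote the closed inclusion. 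The interior condition $Y_n \subset \text{Int}(Y_{n+1})$, combined with $Y = \bigcup_n Y_n$, implies that any relatively compact open in $Y$ lies in some $\text{Int}(Y_n)$; this yields the identification $f^*F \cong R\lim_n (j_n)_*j_n^*f^*F$ in the derived category of $Y$. Applying $Rf_*$ and using that $(j_n)_*$ is exact (as $j_n$ is a closed immersion), so that $Rf_* \circ (j_n)_* = R(f_n)_*$, one obtains $Rf_*f^*F \cong R\lim_n R(f_n)_*j_n^*f^*F$. Taking stalks at $x$ and invoking the preceding paragraph, the tower becomes constant in degrees $\leq k$, equal to $F_x$ concentrated in degree $0$, so the derived limit yields the required identification.

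The main obstacle is making rigorous the exchange of the stalk functor at $x$ with the derived limit $R\lim_n$, which amounts to showing that the $R^1\lim$ correction vanishes in the degree range $\leq k$. The interior condition guarantees that on stalks in these degrees the transition maps in the tower are eventually isomorphisms, so the $R^1\lim$ indeed vanishes in the relevant range and the identification follows. The $k$-connectedness hypothesis on the fibres of $f$ itself (as distinct from that of the $f_n$) serves as a consistency check: since $f^{-1}(x) = \bigcup_n(Y_n \cap f^{-1}(x))$ is an ascending union of compact $k$-connected subsets with nested interiors, the union is automatically $k$-connected, in agreement with the computed answer $F_x$ in degree $0$.
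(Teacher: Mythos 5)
Your argument follows the same path the paper indicates (the paper itself only cites KS Prop.\ 2.7.8 and sketches the outline): apply proper base change on the proper slices $f|_{Y_n}$, then pass to the non-compact limit via a Mittag-Leffler argument, which is precisely your $R\lim$ step. One minor miscalibration in the last paragraph: the vanishing of the $R^1\lim$ correction in degrees $\leq k$ comes from the $k$-connectedness of the slices $Y_n \cap f^{-1}(x)$, not from the interior condition (whose job ends once $f^*F \cong R\lim_n (j_n)_*j_n^*f^*F$ is established), and the transition maps $R^i(f_{n+1})_*j_{n+1}^*f^*F \to R^i(f_n)_*j_n^*f^*F$ are in fact isomorphisms of sheaves for \emph{all} $n$ when $i \leq k$ --- being $F$ for $i=0$ and zero for $1\leq i \leq k$ by proper base change --- not merely eventually isomorphisms on stalks; it is this all-$n$, sheaf-level constancy that lets the Milnor sequence for $R\lim$ collapse cleanly.
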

\begin{proof}
In fact the reference \cite[Prop 2.7.8]{KS} more is proved under the assumption that the fibres of $f$ are contractible, but the weaker statement that we need is precisely what follows from the proof given there. The key point is that in the case where $f$ is proper, one may use proper base change to conclude the vanishing of the functors $Rf_*^jf^*$ in the appropriate range from the $k$-connectedness of the fibres. The extension to the non-compact case then follows by via the Mittag-Leffler condition.
\end{proof}

Since the hypotheses of Lemma \ref{nacyclic} are preserved by base-change, it yields a criterion for a map to be $n$-acylic. We now use the above Lemma to show that $\beta$ is a $k$-acyclic map for $k= \mathrm{min}_{0\leq i \leq n-1}\{(a_i-\nu_i)\}$.

\begin{lemma}
\label{fibres}
The fibres of $\beta\colon \tilde{\mathcal X}_{\mathbf a,\nu}^\mathbf L \to \mathcal U_\mathbf a$ are $k$-connected for $k=2.\mathrm{min}_{1 \leq i \leq n}\{(a_i - \nu_i)\}$.
\end{lemma}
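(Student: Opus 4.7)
The plan is to realise the fibre of $\beta$ over a fixed $(V,y)\in\mathcal U_\mathbf a$ as a Zariski open subset of a complex affine space whose complement has large codimension, and then to apply a standard topological connectivity result.

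A point of the fibre $F = \beta^{-1}(V,y)$ corresponds to a compatible system of surjections $\pi_i \colon L_i \twoheadrightarrow V_i$ satisfying $\pi_{i+1}|_{L_i} = y_i\pi_i$ together with the $\epsilon$-periodicity $\pi_i(x) = \pi_{i-n}(\epsilon x)$ (reflecting $L_{i-n}=\epsilon L_i$ and $V_{i-n} = V_i$). I would consider the ambient linear space $A$ of \emph{all} such compatible tuples, dropping the surjectivity requirement. Using the nilpotency of $y$ together with a choice of sections $L_i/L_{i-1}\hookrightarrow L_i$, one verifies that $A$ is a finite-dimensional affine space, canonically (up to the chosen splittings) identified with $\bigoplus_{i=1}^n \mathrm{Hom}(L_i/L_{i-1},V_i)$, hence of dimension $\sum_{i=1}^n a_i\nu_i$ (in agreement with the fibre dimension recorded in \cite[Lemma 5.11]{L99}). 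The fibre $F\subseteq A$ is precisely the open locus where every $\pi_i$ is surjective.

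Next I would bound the codimension of $A\setminus F$. Setting $\bar\pi_i := \pi_i|_{L_i/L_{i-1}}$, unwinding the compatibility inductively yields
\[
\mathrm{Im}(\pi_i) \;=\; \mathrm{Im}(\bar\pi_i) + W_i, \qquad W_i := \sum_{k\geq 1}(y_{i-1}\cdots y_{i-k})\,\mathrm{Im}(\bar\pi_{i-k}) \subseteq V_i.
\]
Since $W_i$ depends only on the coordinates $\bar\pi_j$ for $j<i$, slicing $A$ over those coordinates reduces surjectivity of $\pi_i$ to surjectivity of the induced map $\bar\pi_i \colon L_i/L_{i-1} \to V_i/W_i$. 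As $\dim V_i/W_i \leq \nu_i \leq a_i = \dim L_i/L_{i-1}$, the rank-deficient locus in the corresponding Hom-space has complex codimension $\geq a_i-\nu_i+1$, and slicing transfers this bound to $A$. Taking the union over the $n$ essentially distinct surjectivity conditions gives $\mathrm{codim}_A(A\setminus F) \geq \min_i(a_i-\nu_i)+1$.

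Finally I would invoke the standard fact that an open subset $U$ of an affine space $\mathbb C^N$ whose complement has complex codimension $\geq c$ is $(2c-2)$-connected: by transversality, any continuous $S^k \to U$ with $k\leq 2c-2$ bounds a disc in the contractible $\mathbb C^N$, which may then be perturbed to avoid a subset of real codimension $\geq 2c$. Applied with $c = \min_i(a_i-\nu_i)+1$, this gives the asserted $2\min_i(a_i-\nu_i)$-connectedness of $F$. The main technical point I expect to require care is the clean identification of $A$ with a finite-dimensional affine space of the correct dimension --- in particular, keeping careful track of the $\epsilon$-periodicity to ensure that the kernels of the $\pi_i$ genuinely assemble into a periodic lattice $\mathbf L'\in\mathcal F_\mathbf a$ rather than something more general; once this structural step is in place, the codimension estimate and the topological conclusion are essentially routine.
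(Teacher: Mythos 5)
Your approach is genuinely different from the paper's. The paper realises $\beta^{-1}(y)$ as an iterated tower of fibre bundles over the space $B$ of $\theta$-equivariant surjections $\pi_0\colon L_0\to W_0$, checks that each Stiefel-manifold fibre is sufficiently connected, and then proves a separate lemma (Lemma~\ref{nilpfibre}) to handle the base $B$, whose structure is genuinely different because $\pi_0$ carries a $\mathbb C[\epsilon]$-module constraint. Your route --- realise the fibre as an open subset of an affine space $A\cong\bigoplus_{i=1}^n\mathrm{Hom}(L_i/L_{i-1},V_i)$ whose complement has complex codimension $\geq\min_i(a_i-\nu_i)+1$, and apply the transversality connectivity estimate --- treats all $n$ surjectivity conditions uniformly and dispenses with the separate treatment of $\pi_0$. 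Your identification of $A$ with $\bigoplus\mathrm{Hom}(L_i/L_{i-1},V_i)$ is correct: once one fixes $\epsilon$-compatible splittings $L_i=\bigoplus_{j\leq i}C_j$ with $\epsilon C_j = C_{j-n}$, the compatibility relations force $\pi_i|_{C_j}=(y_{i-1}\cdots y_j)\bar\pi_j$, and nilpotency of $y$ truncates the sum, so the assignment $(\bar\pi_1,\ldots,\bar\pi_n)\mapsto(\pi_i)$ is a bijection onto compatible tuples.

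There is, however, a gap in the codimension estimate as you have written it. You assert that $W_i=\sum_{k\geq 1}(y_{i-1}\cdots y_{i-k})\mathrm{Im}(\bar\pi_{i-k})$ ``depends only on the coordinates $\bar\pi_j$ for $j<i$,'' and you slice $A$ over those coordinates. But by the periodicity $\bar\pi_{i-n}=\bar\pi_i$, the terms $k=n,2n,\ldots$ contribute $\theta_i^m\mathrm{Im}(\bar\pi_i)$ (with $\theta_i=y_{i-1}\cdots y_{i-n}$), so $W_i$ \emph{does} depend on $\bar\pi_i$ and the slicing is not a clean fibration over the remaining coordinates. The repair is not hard but it requires an extra observation. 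Either: (a) replace $W_i$ by the smaller space $W_i'$ obtained from the terms with $k\not\equiv 0\pmod n$, which honestly depends only on $\bar\pi_j$, $j\not\equiv i\pmod n$; since $\mathrm{Im}(\pi_i)\supseteq\mathrm{Im}(\bar\pi_i)+W_i'$, the non-surjective locus of $\pi_i$ is \emph{contained in} the set where $\bar\pi_i$ fails to surject onto $V_i/W_i'$, and as $\dim V_i/W_i'\leq\nu_i$ this last set has the desired codimension $\geq a_i-\nu_i+1$ in every slice. Or: (b) observe that each $\pi_i$ intertwines $\epsilon$ on $L_i$ with $\theta_i$ on $V_i$, so by Nakayama surjectivity of $\pi_i$ is equivalent to surjectivity of the induced map to $V_i/(\theta_iV_i+W_i')$, which again has dimension $\leq\nu_i$. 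With either correction the codimension bound goes through, and the rest of your argument (the identification of $A$, and the standard fact that the complement of a complex codimension-$c$ subvariety in $\mathbb C^N$ is $(2c-2)$-connected) is correct; your value $c=\min_i(a_i-\nu_i)+1$ yields exactly the $2\min_i(a_i-\nu_i)$-connectedness claimed.
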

\begin{proof}
First note that we may view $X = \tX$ as the set $\{(\varphi_i)_{i \in \mathbb Z} : \varphi_i \colon L_i/\text{ker}(\varphi_{i-1}) \to W_i\}$ where $\varphi_i$ is surjective, $\text{ker}(\varphi_i)$ is a lattice, and $\varphi_{i-n} = \epsilon \varphi_i \epsilon^{-1}$. The corresponding pair $(\mathbf L',(\phi_i))$ is given by $\mathbf L' = (\text{ker}(\varphi_i))_{i \in \mathbb Z}$ with the isomorphisms $\phi_i \colon L_i/L'_i \to W_i$ induced by the surjections $\varphi_i$.

Now suppose that $y=(y_i)_{i \in \mathbb Z/n\mathbb Z} \in E_W^{\text{nil}}$ is a nilpotent representation of the cyclic quiver, and that $(\varphi_i)_{i \in \mathbb Z}$ is in the fibre of $y$. Considering the diagram:

\xymatrix{& & & & 0 \ar[r] & \text{ker}(\varphi_{i+1}) \ar[r] & L_{i+1} \ar[r]^{\varphi_{i+1}} & W_{i+1} \ar[r] & 0\\
& & & & 0 \ar[r] & \text{ker}(\varphi_{i}) \ar[r] \ar[u] & L_{i} \ar[r]^{\varphi_i} \ar[u] & W_{i+1} \ar[u]^{y_i} \ar[r] & 0
}
\noindent
we see that the restriction of $\varphi_{i+1}$ is determined on $L_i$ as it is given by $y_i \circ \varphi_i$ there. Thus given $\varphi_i$, the collection of $\varphi_{i+1}$s which induce $y_i$ is given by choosing a surjection $\pi\colon L_{i+1}/L_i \to W_{i+1}/\text{im}(y_i)$, and then picking a lift of the pair of maps $(\pi,y_i\circ \varphi_i)$ to a map $\varphi_{i+1}\colon L_i/\text{ker}(\varphi_i) \to W_{i+1}$ (since any such lift will be a surjective map). Thus the space of such choices is homotopy equivalent to the space of surjections from $L_{i+1}/L_i$ to $W_{i+1}/\text{im}(y_i)$. This is a complex Stiefel manifold, and hence $2(a_i-\nu_i+\text{rank}(y_i))$ connected.

Thus if we set $k = 2.\mathrm{min}_{1\leq i \leq n}\{(a_i - \nu_i)\}$ we may view $\beta^{-1}(y)$ as an iterated sequence of fibre bundles over the space $B$ of surjections $\{\varphi_0\colon L_0 \to W_0\}$ where in each case the fibres are at least $k$-connected. Thus by the standard long exact sequence $\beta^{-1}(y)$ will be $k$-connected provided we can show that $B$ is. Now $B$ is the space of surjective linear maps from $L_0$ to $W_0$ which intertwine the action of $\epsilon$ with the composition $\theta = y_ny_{n-1}y_{n-2}\ldots y_0\colon W_0 \to W_0$ (note that since the action of $\epsilon$ is nilpotent, this shows the representation $y$ must be also) and the following Lemma shows that in fact $B$ is at least $2(D-\nu_0)\geq 2(a_0-\nu_0)$ connected, so we are done.
\end{proof}

\begin{lemma}
\label{nilpfibre}
Let $(U,\theta)$ be a finite-dimensional vector space $U$ equipped with nilpotent endomorphism $\theta$ of Jordan type $\lambda$, and let $L$ be a free $\mathbb C[\epsilon]$-module of rank $D$. Viewing $U$ as a $\mathbb C[\epsilon]$-module via $\epsilon \mapsto \theta$, the space of $\mathbb C[\epsilon]$-module surjections $\varphi \colon L \to U$ is $d$-connected, where $d= 2(D-\ell(\lambda))$ and $\ell(\lambda)$ is the length of the partition $\lambda$.
\end{lemma}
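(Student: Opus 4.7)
The plan is to reduce the statement to the classical fact that the space of surjective $\mathbb{C}$-linear maps $\mathbb{C}^{D}\twoheadrightarrow\mathbb{C}^{\ell(\lambda)}$ is $2(D-\ell(\lambda))$-connected---this is the same Stiefel manifold connectivity already invoked in the proof of Lemma~\ref{fibres}---via a Nakayama-type argument powered by the nilpotency of $\theta$.

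First I would fix a $\mathbb{C}[\epsilon]$-basis of $L$ to identify the affine space of $\mathbb{C}[\epsilon]$-module homomorphisms $L\to U$ with $U^{D}$, and set $\bar{U}=U/\epsilon U$, a $\mathbb{C}$-vector space of dimension $\ell(\lambda)$ (the number of Jordan blocks of $\theta$). Let $\pi\colon U^{D}\twoheadrightarrow\bar{U}^{D}$ be the natural reduction map; this is a trivial $\mathbb{C}$-vector bundle with fibre $(\epsilon U)^{D}$. Write $S\subseteq U^{D}$ for the space of $\mathbb{C}[\epsilon]$-module surjections and $S_{0}\subseteq\bar{U}^{D}$ for the open subvariety of $\mathbb{C}$-linear surjections $\mathbb{C}^{D}\twoheadrightarrow\bar{U}$. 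The key identification is $S=\pi^{-1}(S_{0})$: one direction is immediate, and for the other, if the reduction $\bar{\varphi}$ of $\varphi$ is surjective then $U=\mathrm{im}(\varphi)+\epsilon U$; iterating gives $U=\mathrm{im}(\varphi)+\epsilon^{k}U$ for every $k\geq 1$, and since $\theta$ (hence $\epsilon$) acts nilpotently on $U$, we have $\epsilon^{k}U=0$ for $k$ large, so $\varphi$ is surjective. This substitute for Nakayama's lemma is the only step where the nilpotency hypothesis enters.

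Next, the restriction $\pi\colon S\to S_{0}$ is the restriction of a trivial vector bundle, hence a homotopy equivalence, so it suffices to show $S_{0}$ is $d$-connected for $d=2(D-\ell(\lambda))$. But $S_{0}$ is the complement in $\mathrm{Hom}_{\mathbb{C}}(\mathbb{C}^{D},\bar{U})\cong\mathbb{C}^{D\ell(\lambda)}$ of the closed subvariety $Z$ of linear maps of rank $<\ell(\lambda)$, and the lowest stratum of $Z$ (rank exactly $\ell(\lambda)-1$) has complex codimension $D-\ell(\lambda)+1$. The standard general-position principle---for a closed subvariety $Z$ of complex codimension $c$ in a smooth complex variety $X$, one has $\pi_{i}(X,X\setminus Z)=0$ for $i\leq 2c-1$---applied with $X$ contractible then gives that $S_{0}$ is $(2c-2)$-connected, i.e. $2(D-\ell(\lambda))$-connected. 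Equivalently, $S_{0}$ deformation retracts (via polar decomposition) onto the complex Stiefel manifold $U(D)/U(D-\ell(\lambda))$, whose connectivity is classical.

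The main obstacle is quite mild: it lies only in arranging the Nakayama/iteration step cleanly using that $\theta$ is nilpotent; once $S$ is identified with $\pi^{-1}(S_{0})$ the claimed bound follows from the same Stiefel-manifold connectivity invoked in Lemma~\ref{fibres}.
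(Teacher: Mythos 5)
Your proof is correct and follows essentially the same route as the paper: both arguments identify $\mathrm{Hom}_{\mathbb{C}[\epsilon]}(L,U)$ with $U^{D}$ via a choice of generators, observe that a module map is surjective precisely when its reduction $L/\epsilon L\to U/\theta U$ is (the Nakayama-type use of nilpotency you spell out is exactly what underlies the paper's one-line surjectivity criterion), and then exhibit the space of module surjections as a bundle with contractible (affine/vector) fibres over the Stiefel manifold of $\mathbb{C}$-linear surjections $\mathbb{C}^{D}\twoheadrightarrow\mathbb{C}^{\ell(\lambda)}$, whose $2(D-\ell(\lambda))$-connectivity is classical.
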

\begin{proof}

Note that a $\mathsf k[\epsilon]$-module map $\varphi\colon L \to U$ is surjective if and only if the induced $\mathsf k$-linear map $\bar{\varphi} \colon L/\epsilon(L) \to U/\theta(U)$ is surjective. Moreover, since the space of surjections from $L/\epsilon(L)$ to $U/\theta(U)$ is a complex Stiefel manifold of $k$-frames in a $D$ dimensional space, it is $2(D-\ell(\lambda))$-connected. 

Now suppose that $\psi$ is a $\mathbb C$-linear map from $L/\epsilon(L) \to U/\theta(U)$. Let $K$ denotes the $D$-dimensional $\mathbb C$-vector space spanned by a set of $\mathbb C[\epsilon]$-generators of $L$, say $\{e_1,\ldots,e_D\}$. A $\mathsf k[\epsilon]$-module map $\varphi\colon L \to U$ with $\bar{\varphi}=\psi$ is completely determined by its restriction to $K$, and the induced map $\varphi'\colon K \to U$ is given by a choice of lifts for the vectors $\{\psi(e_i)\}$ in $U$ (where by abuse of notation we denote by $\psi$ the composition of $L \to L/\epsilon(L) \to U/\theta(U)$), and hence the space of such surjections is clearly a vector bundle over the Stiefel manifold of surjections from $L/\epsilon L$ to $U/\theta(U)$ which proves the Lemma.

\end{proof}

\begin{prop}
The map $\beta\colon \tX \to E_W^{\text{nil}}$ is $k$-connected.
\end{prop}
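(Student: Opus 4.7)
The plan is to apply Lemma \ref{nacyclic} with $f = \beta$. The $k$-connectedness of the fibres of $\beta$ is already given by Lemma \ref{fibres} (in fact they are $2k$-connected there, which is more than enough), so the essential task is to construct an exhaustion $\tX = \bigcup_n Y_n$ by closed subsets with $Y_n \subset \mathrm{Int}(Y_{n+1})$ on which $\beta$ restricts to a proper map with $k$-connected fibres.

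The key geometric input is that $\alpha\colon \tX \to \mathcal X^{\mathbf L}_{\mathbf a,\nu}$ is a principal $G_W$-bundle over a \emph{projective} (hence analytically compact) variety; thus all noncompactness of $\tX$ comes from the $G_W$-direction. The idea is to pick an exhaustion $K_1 \subset K_2 \subset \cdots$ of $G_W$ by compact subsets with $K_n \subset \mathrm{Int}(K_{n+1})$ and $\bigcup_n K_n = G_W$. Choosing a finite trivializing cover of $\mathcal X^{\mathbf L}_{\mathbf a,\nu}$ together with a subordinate partition of unity, one then glues the local ``products'' $V \times K_n$ into closed subsets $Y_n \subset \tX$ enjoying the required interior-containment property, with $\bigcup Y_n = \tX$.

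Properness of $\beta|_{Y_n}$ is then essentially automatic: for compact $C \subset E_W^{\mathrm{nil}}$, the preimage $\beta^{-1}(C) \cap Y_n$ has image in the compact $\mathcal X^{\mathbf L}_{\mathbf a,\nu}$ under $\alpha$ while having $G_W$-component bounded by the compact $K_n$, whence it is itself compact. The main obstacle is establishing that each truncated fibre $\beta^{-1}(y) \cap Y_n$ remains $k$-connected. To handle this, one exploits the tower-of-complex-Stiefel-fibrations structure of $\beta^{-1}(y)$ displayed in the proof of Lemma \ref{fibres}. By arranging the $Y_n$ to be sublevel sets of a proper function on $\tX$ built from norms of the isomorphisms $\phi_i$ (with respect to continuous families of Hermitian metrics on the finite-dimensional quotients $L_i/L'_i$, which exist by compactness of $\mathcal X^{\mathbf L}_{\mathbf a,\nu}$), standard Morse-theoretic arguments show that $Y_n \cap \beta^{-1}(y)$ is a deformation retract of the full fibre $\beta^{-1}(y)$ for $n$ sufficiently large (uniformly as $y$ varies over any compact region). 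Combined with the $k$-connectedness of $\beta^{-1}(y)$ this yields $k$-connectedness of the truncated fibres, and Lemma \ref{nacyclic} then delivers the proposition.
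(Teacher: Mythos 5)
Your proposal converges with the paper's proof in all the essential respects: invoke Lemma \ref{nacyclic}, use Lemma \ref{fibres} for $k$-connectedness of the fibres, exploit the structure of $\alpha\colon \tX \to \mathcal X^{\mathbf L}_{\mathbf a,\nu}$ as a principal $G_W$-bundle over a projective base so that all noncompactness lives in the $G_W$-direction, and then exhaust $\tX$ by closed subsets on which $\beta$ is proper while the fibres stay highly connected. The one place where your write-up diverges is that you open with a partition-of-unity / local-trivialization construction of the $Y_n$ (gluing patches of the form $V\times K_n$) and then, when addressing $k$-connectedness of the truncated fibres, silently replace it with a different construction -- sublevel (really, level-band) sets of a norm function built from Hermitian metrics. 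The latter is precisely what the paper does: it fixes a single Hermitian product on $V$ making $\epsilon$ unitary, induces metrics on the quotients $L_i/L_i'$ via orthogonal complements, defines $\mathrm N$ as the maximum operator norm of the $\phi_i$, and takes $Y_i = \{i^{-1}\le \mathrm N \le i\}$. You should commit to this second construction from the start, since the partition-of-unity gluing does not obviously yield truncated fibres whose homotopy type you can control, and the norm-band construction plays directly into the Stiefel-manifold structure of the fibres. Finally, where you appeal to ``standard Morse-theoretic arguments'' to see that the truncated fibres deformation retract onto the full fibres, the paper gives a concrete mechanism: the Gram--Schmidt process applied fibre-by-fibre through the tower of Stiefel fibrations from Lemma \ref{fibres} furnishes the retraction explicitly. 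Your Morse-theoretic gloss is plausible but would need to be made precise (one must check the relevant function is proper on each fibre and has no critical values in the range being truncated); the explicit Gram--Schmidt retraction sidesteps that verification.
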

\begin{proof}
To show $\beta$ is a $k$-connected map we wish to apply Lemma \ref{nacyclic}. By Lemma \ref{fibres} we know that the fibres of $\beta$ are $k$-connected, and hence we must show that we can filter $\tX$ by subvarieties $\{Y_i\}_{i \in \mathbb N}$ such that $\beta_{|Y_i}$ is proper while ensuring that the fibres remain $k$-connected. To do this we simply note that the topology of the fibres of $\beta$ all come from Steifel manifolds, and these deformation retract on to the compact Steifel manifolds. Moreover the retraction can be done via the ``Gram-Schmidt'' process, once we endow our vector spaces with a Hermitian inner product. 

More precisely, we may equip $V$ with an Hermitian inner product (here we will assume that $\mathsf k= \mathbb C$, as in the rest of this section) so that $\epsilon$ is a unitary map (\textit{e.g.} take a $\mathbb C[\epsilon]$-basis $\{e_1,e_2,\ldots e_D\}$ of $L_0$ and define the Hermitian product $\langle,\rangle$ by setting 
\[
\langle \epsilon^le_j,\epsilon^m e_k \rangle = \delta_{j,k}\delta_{l,m}, \quad (l,m \in \mathbb Z, 1\leq j,k \leq D),
\]
Similarly we may equip the $\{W_i\}$ with Hermitian inner products. Then we have a norm function $\mathrm N$ on $\tX$ given by
\[
\mathrm N(\mathbf L,(\phi_i)) = \text{max}_{0 \leq i \leq n-1}\{\text{sup}\{\|\phi_i(u)\|:u \in L_i/L'_i, \|u\| = 1\}.
\]
where the norm on $L_i/L_i'$ is induced from that on $V$ via the canonical isomorphism $(L_i')^\perp \cong L_i/L_i'$. Now we may set $Y_i = \{(\mathbf L,(\phi_i)): i^{-1}\leq \mathrm N(\mathbf L,(\phi_i)) \leq i\}$.
Since the map $\alpha$ from $\tX$ to $\mathcal X_{\mathbf a,\nu}^\mathbf L$ is a principal $G_W$-bundle over a projective variety, and the norm condition defining $Y_i$ clearly cuts out a compact subset of the fibres of $\alpha$, it follows that $Y_i$ is compact, and so in particular $\beta_{|Y_i}$ is proper. Thus it remains to check that the fibres of $\beta_{|Y_i}$ are still $k$-connected.

To do this we may use the Gram-Schmidt process to iteratively deform the linear maps in the fibres in the same manner as we checked $k$-connectedness via a sequence of fibre bundles. In the case of the choice of the surjection $L_0 \to W_0$ which defines $L_0'$ and $\phi_0$, note that we need only ``Gram-Schmidt'' the frame defining the map from $K $ to $W_0/\theta(W_0)$, where $K= \text{span}_{\mathbb C}\{e_1,\ldots, e_D\}$.
\end{proof}

We can now complete the geometric proof of the equality of our inner product with that in \cite{L93}.

\begin{theorem}
The inner products $\langle, \rangle$ and $(,)$ on $\Ud$ coincide.
\end{theorem}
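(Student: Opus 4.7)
The plan is to verify that $\langle,\rangle$ satisfies the three properties that characterise $(,)$ in Theorem~\ref{Udinnerproduct}. Property (1) holds for $\langle,\rangle$ because distinct elements of $X' = \mathfrak S^n/\mathbb Z\mathbf b_0$ are represented in $\mathfrak S_{D,n}$ (when at all) by distinct classes $\mathbf a$, whose diagonal matrices $\mathbf i_\mathbf a \in \mathfrak S_{D,n,n}$ are orthogonal basis elements of $\mathfrak A_D$ under $(,)_D$. Property (2) follows from Corollary~\ref{properties} passed to the limit, together with the fact that $\Ud(\widehat{\mathfrak{sl}}_n)$ embeds in the image of $\phi_D$ and that $\rho$ acts in the same way on the Chevalley generators in the $\widehat{\mathfrak{sl}}_n$ and $\widehat{\mathfrak{gl}}_n$ settings. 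All of the content of the theorem therefore lies in verifying property (3): $\langle x^-1_\lambda, y^-1_\lambda\rangle = (x,y)$ for $x,y \in \mathbf f$.

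By bilinearity we may take $x = b_1$, $y = b_2$ in the canonical basis $\mathbf B_\nu$ for some $\nu \in \mathbb N[I]$, and let $P_1, P_2$ be the corresponding simple $G_W$-equivariant perverse sheaves on $E_W^{\mathrm{nil}}$, where $\dim W_i = \nu_i$. Fix $\lambda \in X'$, let $k \in \{0,\dotsc,n-1\}$ satisfy $\sum_i \lambda_i \equiv k \bmod n$, and for each large enough $p$ pick a representative $\mathbf a_p \in \mathfrak S_{k+pn,n}$ of $\lambda$ with $\mathcal U_{\mathbf a_p} = E_W^{\mathrm{nil}}$. By the construction recalled before Lemma~\ref{nacyclic}, $\phi_{k+pn}(b_j^-1_\lambda) = \{B_j^{(p)}\}$ for matrices $B_j^{(p)}$ whose strictly upper-triangular parts record the tableaux of $P_j$, and the identification $\alpha^*(\mathcal I_{B_j^{(p)}}) \cong \beta^*(P_j)$ holds on $\tilde{\mathcal X}^\mathbf L_{\mathbf a_p,\nu}$.

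Combining Proposition~\ref{same2} with the geometric formula \eqref{perversedef}, the inner product $(\{B_1^{(p)}\},\{B_2^{(p)}\})_{k+pn}$ reduces to a compactly supported cohomology on $\mathcal X^\mathbf L_{\mathbf a_p,\nu} \subset \mathcal F_{\mathbf a_p}$; pulling back along the principal $G_W$-bundle $\alpha\colon \tilde{\mathcal X}^\mathbf L_{\mathbf a_p,\nu}\to \mathcal X^\mathbf L_{\mathbf a_p,\nu}$ and then using the smooth map $\beta$ with connected fibres, this becomes the $G_W$-equivariant compactly supported cohomology of $P_1\otimes P_2$ on $E_W^{\mathrm{nil}}$, computed against the resolution $\tilde{\mathcal X}^\mathbf L_{\mathbf a_p,\nu}$, up to a shift in cohomological degree coming from the fibre dimension of $\beta$ and the perverse normalisations $[d_{B_j^{(p)}}]$. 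One checks that this overall power of $v$ is independent of $p$ once one has matched the conventions used in Equation~\eqref{perversedef} and in \eqref{finnerp}.

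By the proposition immediately preceding the theorem, $\beta$ is $k_p$-acyclic with $k_p\to\infty$ as $p\to\infty$, so in each fixed cohomological degree the computation via the finite-dimensional resolution $\tilde{\mathcal X}^\mathbf L_{\mathbf a_p,\nu}$ stabilises to the true $G_W$-equivariant compactly supported cohomology of $P_1\otimes P_2$ on $E_W^{\mathrm{nil}}$. By~\eqref{finnerp} this is precisely $(b_1,b_2)$, so $\lim_{p\to\infty}\langle b_1^-1_\lambda, b_2^-1_\lambda\rangle_{k+pn} = (b_1,b_2)$, establishing property (3). The principal technical obstacle, namely the $k$-acyclicity of $\beta$, has already been dispatched in the lemmas leading up to this theorem; the remaining delicate point is the careful bookkeeping of perverse shifts and fibre-dimension shifts needed to see that the powers of $v$ on the two sides of the comparison agree with no residual $p$-dependence.
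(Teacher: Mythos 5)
Your proposal is correct and follows essentially the same route as the paper's geometric proof: reduce to verifying property (3) of Theorem~\ref{Udinnerproduct}, translate $\phi_{k+pn}(b_j^-1_\lambda)$ into triangular matrices $B_j^{(p)}$ corresponding to perverse sheaves $P_j$ on the cyclic quiver variety via the correspondence $\alpha,\beta$, express $(\{B_1^{(p)}\},\{B_2^{(p)}\})_{k+pn}$ as a cohomology computed against the resolution $\tilde{\mathcal X}^{\mathbf L}_{\mathbf a_p,\nu}$ using Proposition~\ref{same2} and Equation~\eqref{perversedef}, and invoke the $k$-acyclicity of $\beta$ (the proposition preceding the theorem) to see that this converges to the equivariant inner product~\eqref{finnerp} on $\mathbf f$ as $p\to\infty$. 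You are also right to flag the bookkeeping of perverse and fibre-dimension shifts as the one point left implicit — the paper states the identity $(\mathcal I_A,\mathcal I_B)_D = \sum_j H^j_{G,\tilde{\mathcal X}^{\mathbf L}_{\mathbf a,\nu},c}(\tilde P_1,\tilde P_2)v^{-j}$ without spelling out that verification either.
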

\begin{proof}
As discussed above, it is enough to know that the varieties $\tilde{\mathcal X}^\mathbf L_{\mathbf a+p\mathbf b_0,\nu}$ become more and more connected as $p$ tends to infinity. Since the value of $k = \mathrm{min}_{0\leq i \leq n-1}\{(a_i+p-\nu_i)\}$ clearly tends to infinity as $p$ does (where $\mathbf a$ is the representative of $\lambda \in X'$ lying in $\mathfrak S_{D,n}$) the equality of the inner products follows.
\end{proof}

\section{A construction of the canonical basis of $\dot{\mathbf U}(\widehat{\mathfrak{sl}}_n)$} 

\label{injective}
In \cite{L99a} Lusztig defines homomorphisms
\[
\psi_D\colon \mathfrak A_D \to \mathfrak A_{D-n},
\]
 which are characterised, at least on $\mathbf U_D$, by the following,
\begin{itemize}
\item $\psi_D(E_i(D))=E_i(D-n)$; \item $\psi_D(F_i(D))=F_i(D-n)$;
\item $\psi_D(K_{\mathbf a}(D))=v^{\mathbf a \cdot \mathbf
b_0}K_{\mathbf a}(D-n)$.
\end{itemize}
where $\mathbf b_0$ has all entries equal to $1$. It follows that if we work with the root datum $(X',Y')$, \textit{i.e.} with $\widehat{\mathfrak{sl}}_n$, then the maps $\psi_D$ and $\phi_D\colon \Ud\to \mathfrak A_{D,n,n}$ are compatible, that is $\psi_{D+n} \circ\phi_{D+n} = \phi_D$.

Let $\hat{\mathbf U} = \underleftarrow{\lim}_D\mathfrak A_D$ where the limit is taken over the projective system given by the maps $(\psi_D)_{D \in \mathbb N}$ described above. Since the maps $\phi_D$ are compatible with this system, there is a unique map $\phi\colon \dot{\mathbf U} \to \hat{\mathbf U}$, which factors each of the maps $\phi_D$ through the canonical map $\hat{\mathbf U} \to \mathfrak A_D$.  Theorem~\ref{same} allows us to give an alternative proof of the following injectivity result which is due to Lusztig \cite{L99a}.
 
\begin{prop}
The homomorphism $\phi$ is injective.
\label{inj}
\end{prop}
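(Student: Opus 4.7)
The plan is to deduce injectivity from the nondegeneracy of the limiting inner product $\langle,\rangle$ on $\dot{\mathbf U}$, using the identification with Lusztig's inner product given by Theorem~\ref{same}.

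First, I would unwind the definitions: the map $\phi\colon \dot{\mathbf U}\to \hat{\mathbf U}$ is injective if and only if $\bigcap_{D}\ker(\phi_D)=0$, since an element of the inverse limit is zero exactly when each of its components in $\mathfrak A_D$ vanishes. Fix $x\in \bigcap_D \ker(\phi_D)$. Then for every $y\in \dot{\mathbf U}$ and every $D\in\mathbb N$, by the definition of $\langle,\rangle_D$ we have
\[
\langle x,y\rangle_D \;=\; (\phi_D(x),\phi_D(y))_D \;=\; 0.
\]

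Next, taking the limit through residue classes of $D$ modulo $n$, we obtain
\[
\langle x,y\rangle \;=\; \sum_{k=0}^{n-1}\lim_{p\to\infty}\langle x,y\rangle_{k+pn} \;=\; 0, \qquad \forall y\in\dot{\mathbf U}.
\]
By Theorem~\ref{same} the form $\langle,\rangle$ agrees with Lusztig's inner product $(,)$ on $\dot{\mathbf U}$, so $(x,y)=0$ for all $y\in \dot{\mathbf U}$.

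Finally, I would invoke the nondegeneracy of Lusztig's inner product on $\dot{\mathbf U}$; this is standard (cf.\ \cite[26.1.2]{L93}), and follows for instance from the fact that the canonical basis $\dot{\mathbf B}$ is almost orthonormal with respect to $(,)$, which makes the Gram matrix of $(,)$ in that basis invertible over $\mathbb Q(v)$. Nondegeneracy forces $x=0$, proving $\bigcap_D\ker(\phi_D)=0$ and hence the injectivity of $\phi$. The only real ingredient beyond bookkeeping is the identification of $\langle,\rangle$ with Lusztig's form, which has already been established; everything else is formal.
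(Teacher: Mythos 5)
Your proof is correct and follows essentially the same route as the paper: both arguments reduce injectivity to the nondegeneracy of the limiting inner product $\langle,\rangle$ on $\dot{\mathbf U}$, using Theorem~\ref{same} to identify it with Lusztig's form $(,)$. One small difference worth noting: you justify nondegeneracy by invoking the almost orthonormality of the canonical basis $\dot{\mathbf B}$. The paper deliberately avoids this particular argument --- it cites $\S 26.2$ and Lemma $19.1.4$ of \cite{L93} instead --- because the subsequent section re-derives $\dot{\mathbf B}$ geometrically from the inverse system, and the author does not want to presuppose its existence; this is spelled out in the remark immediately following the proposition. So your reasoning is sound but would introduce a (mild, avoidable) circularity into the paper's overall logical structure. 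Also, strictly speaking the "Gram matrix" of $(,)$ on $\dot{\mathbf B}$ is infinite, so "invertible over $\mathbb Q(v)$" should be replaced by the standard almost-orthonormality $\Rightarrow$ linear independence $\Rightarrow$ nondegeneracy argument rather than literal matrix inversion.
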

\begin{proof}
We first note that the inner product on $\dot{\mathbf U}$ is nondegenerate. While this is not explicitly stated in the book \cite{L93}, it follows easily from the results there. For example, one may use the results of $\S 26.2$ and the nondegeneracy of the inner product defined in $\S 19.1$ which is established in Lemma $19.1.4$ (all references in this sentence are to sections of \cite{L93}). Now suppose that $u$ is in the kernel of $\phi$. Then for every $D$ we have $\phi_D(u)=0$, and hence by Theorem~\ref{same} we see that $u$ is in the radical of the inner product on  $\dot{\mathbf U}$, and hence it follows that $u=0$.
\end{proof}

The modified quantum group $\dot{\mathbf U}$ is equipped with a canonical basis $\dot{\mathbf B}$ which generalizes the canonical basis of $\mathbf U^-$. We now show that the compatibility of the inner products can be used to give an essentially self-contained construction of this basis. Let $\mathbf A = \mathbb Q(v)\cap \mathbb Q[[ v^{-1}]]$ and let $\mathbb B^\pm$ be defined by:
\[
\mathbb B^\pm = \{b \in \Ud_\mathcal A: (b,b) \in 1 + v^{-1}\mathbf A, \bar{b} = b\}.
\]
We show that this set is a signed basis of $\Ud$. (We will also be able to choose a basis within this set). 

\begin{remark}
In \cite{L93} Lusztig shows that $\dot{\mathbf B}$ is ``almost orthonormal'' for the inner product, from which one can also deduce the (weaker) nondegeneracy statement used in the proof of Proposition \ref{inj}. We prefer the argument given above since we wish to give a geometric construction of the canonical basis which does not presuppose it's existence.
\end{remark}

We begin by showing that $\mathbb B^{\pm}$ is closely related to the bases $\mathfrak B_D$ of the algebras $\mathfrak A_D$.

\begin{prop}
\label{bases}
Let $b \in \mathbb B^\pm$. Then there exists $\lambda$ such that $b \in \dot{\mathbf U} 1_\lambda$. If $k$ is the residue of $\sum_{i=1}^n \lambda_i \mod n$ then there is a $p_0 >0$
such that for all $p > p_0$ we have $\phi_{k+pn}(b) \in \pm \mathfrak B_D$. Conversely, if $b \in \Ud1_\lambda$ has $\phi_D(b) \in \pm \mathfrak B_{k+pn}$ for all $p>p_1$ (some $p_1 \in \mathbb N$) then $b \in \mathbb B^{\pm}$. 
\end{prop}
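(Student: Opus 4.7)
My plan is to first handle the easy converse direction and then attack the main direction via a positivity argument comparing top $v$-degrees of canonical basis expansions with the analytic structure of the limit. For the converse, suppose $b \in \Ud_\mathcal A 1_\lambda$ satisfies $\phi_{k+pn}(b) = \pm\{A_p\}$ for all $p > p_1$. Then $\overline{\phi_D(b)} = \phi_D(b)$ gives $\phi_D(\bar b - b) = 0$ for these $D$, so $\bar b = b$ by injectivity (Proposition~\ref{inj}). By Lemma~\ref{almostortho}, $(\phi_D(b), \phi_D(b))_D \in 1 + v^{-1}\mathbb Z[v^{-1}]$, and these limit in $\mathbb Q((v^{-1}))$ to $(b,b) \in \mathbb Q(v) \cap (1 + v^{-1}\mathbb Z[[v^{-1}]]) \subseteq 1 + v^{-1}\mathbf A$, giving $b \in \mathbb B^\pm$.

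For the forward direction, take $b \in \mathbb B^\pm$ and decompose $b = \sum_\lambda b_\lambda$ along $\Ud = \bigoplus_\lambda \Ud 1_\lambda$; each $b_\lambda$ is bar-invariant and lies in $\Ud_\mathcal A 1_\lambda$. The image $\phi_D(b_\lambda)$ is supported on those $\{A\}$ with $c(A)$ equal to the unique representative $\mathbf{a}_\lambda \in \mathfrak S_{D,n}$ of $\lambda$, and distinct $\lambda$'s give distinct $\mathbf a_\lambda$'s, so orthogonality of the $[A]$-basis yields $(\phi_{k+pn}(b), \phi_{k+pn}(b))_{k+pn} = \sum_{\lambda: k_\lambda = k}(\phi_D(b_\lambda), \phi_D(b_\lambda))_D$. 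Writing $\phi_D(b_\lambda) = \sum_A c_A^{(p,\lambda)}\{A\}$ with bar-invariant $c_A^{(p,\lambda)} \in \mathcal A$ and $r^{(p,\lambda)} = \max_A \deg_v c_A^{(p,\lambda)}$, Lemma~\ref{almostortho} shows that if $r^{(p,\lambda)} \geq 1$ then $(\phi_D(b_\lambda), \phi_D(b_\lambda))_D$ has top $v$-degree exactly $2r^{(p,\lambda)}$ with strictly positive top coefficient (the diagonal terms contribute a sum of squares while the cross terms $(\{A\},\{A'\})_D$ for $A \neq A'$ lie in $v^{-1}\mathbb Z[v^{-1}]$ and are outranked by one degree). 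This strict positivity is the key input.

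The convergence argument of Section~\ref{innerU} gives $(\phi_{k+pn}(b), \phi_{k+pn}(b))_{k+pn} = G_k(v, v^{-p})$ with $G_k \in \mathcal R$, limiting in $\mathbb Q((v^{-1}))$ to $L_k \in \mathbb Q(v)$. If some $L_k$ had nonzero positive-$v$ part, pick the maximal degree $d^*$ at which any such $L_k$ has nonzero coefficient; by the strict positivity of the previous paragraph combined with integrality and eventual stabilization of finite-level coefficients, the coefficient of $v^{d^*}$ in each contributing $L_k$ is a strictly positive integer, yet $\sum_k L_k = (b,b) \in 1 + v^{-1}\mathbf A$ has vanishing positive-$v$ part --- contradiction. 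Hence each $L_k \in \mathbf A$, and as $L_k \in \mathbb Q(v)$ has positive-$v$ part of bounded degree, for all $p$ sufficiently large the positive-$v$ part of $(\phi_D(b), \phi_D(b))_D$ itself vanishes. This forces $r^{(p,\lambda)} = 0$ for every $\lambda$, and bar-invariance then yields $c_A^{(p,\lambda)} \in \mathbb Z$.

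With integer $c_A^{(p,\lambda)}$ for $p$ large, the constant term of $(\phi_D(b_\lambda), \phi_D(b_\lambda))_D$ is $\sum_A (c_A^{(p,\lambda)})^2 \in \mathbb Z_{\geq 0}$ by Lemma~\ref{almostortho}. Summing over $\lambda, k$ and passing to the limit gives the constant term of $(b,b) = 1$, so for $p$ large exactly one pair $(\lambda^*, A_p)$ satisfies $c_{A_p}^{(p,\lambda^*)} = \pm 1$ and every other coefficient vanishes. If some $b_\lambda \neq 0$ with $\lambda \neq \lambda^*$ existed, the compatibility $\psi_{D+n}\phi_{D+n} = \phi_D$ combined with Proposition~\ref{inj} would force $\phi_D(b_\lambda) \neq 0$ for all large $D$, contradicting the single-pair conclusion; hence $b = b_{\lambda^*} \in \Ud 1_{\lambda^*}$ and $\phi_{k+pn}(b) = \pm\{A_p\} \in \pm \mathfrak B_D$ for all sufficiently large $p$. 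The main obstacle will be executing the positivity bootstrap cleanly: translating the hypothesis that the \emph{limit} $(b,b)$ lies in $1+v^{-1}\mathbf A$ into the vanishing of positive-$v$ parts at finite level, which requires the strict positivity of top-degree terms to rule out cancellation among the $L_k$'s.
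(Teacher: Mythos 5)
Your proposal is correct and follows essentially the same route as the paper: expand $\phi_D(b)$ in the canonical basis $\mathfrak B_D$, invoke almost orthonormality (Lemma~\ref{almostortho}) to see the top $v$-degree coefficient of $(\phi_D(b),\phi_D(b))_D$ is a sum of squares, use the $\mathbb Q((v^{-1}))$-convergence and $(b,b)\in 1+v^{-1}\mathbf A$ to force that top degree to be $0$ for large $p$, conclude the coefficients are integers by bar-invariance, and then use constant term $=1$ to extract a single signed basis element; the localization in weight via $\psi$-compatibility and injectivity of $\phi$ is also the paper's argument. The one small slip is in the converse: the hypothesis is $b\in\Ud 1_\lambda$, not $b\in\Ud_{\mathcal A}1_\lambda$, so you must first deduce integrality — the paper does this by observing that $\phi_D(b)\in\mathfrak A_{D,\mathcal A}$ for all relevant $D$ forces $b\in\Ud_{\mathcal A}$ via the injection of $\Ud$ into the inverse limit — whereas you assume it from the start; beyond that your converse (bar-invariance from injectivity, $(b,b)\in 1+v^{-1}\mathbf A$ from Lemma~\ref{almostortho} and convergence) matches the paper.
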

\begin{proof}

Suppose that $b \in \mathbb B^{\pm}$. Then since the inner product $\dot{\mathbf U}$ is obtained as a limit from the inner products  on $\mathfrak A_D$, we see that for large $p$ we have (in the notation of Section \ref{innerU})
 \[
\sum_{l=0}^{n-1} \langle b, b\rangle_{l+pn}= 1 \text{ mod } v^{-1}\mathbb Z[v^{-1}].
 \]
 Now for each $l$, ($0 \leq l \leq n-1$) set $x_{l+pn} =\phi_{l+pn}(b)$. It is clear that $x_{l+pn}$ is bar-invariant (for the bar involution on $\mathfrak A_{l+pn}$, see Remark \ref{bartranspose}), and lies in $\mathfrak A_{l+pn,\mathcal A}$. Thus we may write $x_{l+pn} = \sum_{i \in I} a_i\{A_i\}$ for some $a_i \in \mathcal A$ and $A_i \in \mathfrak S^{\text{ap}}_{l+pn,n,n}$, where $\bar{a}_i = a_i$. Now suppose that $a_i \in v^{m}\mathbb Z[v^{-1}]$ for all $i \in I$, and $m$ is minimal with this property. Then using the ``almost orthonormality'' property that $(\{A_i\},\{A_j\}) \in  \delta_{i,j} + v^{-1}\mathbb Z[v^{-1}]$ (see Lemma \ref{almostortho}), we see that if $J\subset I$ denotes the subset consisting of those $i$ with $a_i = c_iv^m + \ldots$ where $c_i \neq 0$, then
 \[
 (x_{l+pn},x_{l+pn})_{l+pn} = (\sum_{i \in J} c_i^2)v^{2m} + \text{ lower order terms}.
 \]
 Thus in particular, since $(x_{l+pn},x_{l+pn}) \in \mathbb Z[v^{-1}]$ we must have $m=0$. But then since $x_{l+pn}$ is bar-invariant, we must have $a_i \in \mathbb Z$ for each $i \in I$. Now since 
 \[
 \sum_{k=1}^{n-1} (x_{k+pn},x_{k+pn}) \in 1+v^{-1}\mathbb Z[v^{-1}]
 \]
 it follows that in fact there is a $k \in \{0,1,\ldots, n-1\}$ such that $x_{l+pn} = 0$ for $l \neq k$ and $x_{k+pn} = \pm \{A\}$ for some $A \in \mathfrak S_{k+pn,n,n}$. Indeed the same argument shows that the signed basis $\pm\mathfrak B_D$ is characterised by the properties that its elements are bar-invariant, integral, and almost orthonormal. Note also that if $\lambda = c(A) \text{ mod } \mathbb Z\mathbf b_0$ it is then easy to see that $b = b1_\lambda$ as claimed in the statement of the Lemma.
 
The converse is easier, since we know that $\Ud$ injects into the inverse limit of the $\mathfrak A_D$, so that if $\phi_D(b) \in \mathfrak A_{D,\mathcal A}$ for all $D \equiv k \text{ mod } n$, then $b \in \Ud_\mathcal A$, and bar invariance and the condition on $(b,b)$ is also evident.
 \end{proof}

To extract a basis from $\mathbb B^{\pm}$ we need to recall some results of Lusztig. For this we need some definitions. Let $\mathfrak S_{D,n,n,}^-$ be the set of all $B \in \mathfrak S_{D,n,n}$ such that $b_{ij} = 0$ for $i >j$. Let 
$\mathfrak S_{D,n,n}^+$ be the set of all $B \in \mathfrak S_{D,n,n}$ such that $b_{ij} = 0$ for all $i <j$.
Given $A \in \mathfrak S_{D,n,n}$ we may define $A^+$ and $A^-$ in $\mathfrak S_{D,n,n}^+$ and $\mathfrak S_{D,n,n}^-$ respectively by
\[
\begin{split}
a^-_{ij} = a_{ij} \text{ if } i <j; a^-_{ij} = 0 \text{ if } i>j; a_{ii}^- = \sum_{j \in \mathbb Z, i \geq j} a_{ij}; \\
a^+_{ij} = a_{ij} \text{ if } i >j; a^+_{ij} = 0 \text{ if } i>j; a_{ii}^- = \sum_{k \in \mathbb Z, k \leq i} a_{ki};
\end{split}
\]

\begin{lemma}
\label{Lusztigresults}
Let $A \in \mathfrak S_{D,n,n}$. Then we have
\begin{enumerate}
\item If $A \in \mathfrak S_{D,n,n}^\pm$ then $\psi_D(\{A\}) = \{A-I\}$.
\item For any $A \in \mathfrak S_{D,n,n}$ we have 
\[
\{A^-\}\{A^+\} = \{A\} + \sum_{A' < A} c_{A,A'} \{A'\},
\]
where $c_{A,A'} \in \mathbb Z[v,v^{-1}]$. 

\item 
For any $A \in \mathfrak S_{D,n,n}$ we have 
\[
\psi_D(\{A\}) = \{A-I\} + \sum_{A' < A} e_{A,A'}\{A'-I\},
\]
where $\{A-I\}$, (respectively $\{A'-I\}$) are interpreted as $0$ if $A-I$ (respectively $A'-I$) does not lie in  $\mathfrak S_{D,n,n}$.

\end{enumerate}
\end{lemma}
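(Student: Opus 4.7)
The plan is to handle the three claims sequentially, with claim (3) being a formal consequence of the first two via induction on the partial order $\leq$.

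For claim (1), the strategy is geometric. For $A \in \mathfrak S_{D,n,n}^+$ (the case of $\mathfrak S_{D,n,n}^-$ being analogous), the orbit $\mathcal O_A$ consists of pairs $(\mathbf L,\mathbf L')$ with $L_i' \subseteq L_i$ for all $i$, so the variety $\bar X_A^{\mathbf L}$ sits inside a natural subvariety of $\mathcal F^n$ controlled by such inclusions. I would construct a morphism from (an appropriate open subset of) $\bar X_A^\mathbf L$ in rank $D$ to $\bar X_{A-I}^{\mathbf L''}$ in rank $D-n$ by quotienting $V$ by a fixed codimension-$n$ periodic sublattice of $\mathbf L$ (this is the lattice-theoretic realization of $\psi_D$). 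The key point is that this morphism is smooth with connected fibre of a predictable dimension, respects the stratification by orbits $\mathcal O_{A_1}$ with $A_1 \leq A$, and pulls the IC sheaf $\mathcal I^{\mathbf L''}_{A-I}$ back to $\mathcal I^{\mathbf L}_A$ up to the correct shift; this translates into the asserted equality $\psi_D(\{A\}) = \{A-I\}$ in terms of the basis $\{B\}$.

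For claim (2), I would invoke the geometric convolution picture. The product $\{A^-\}\cdot\{A^+\}$ arises as the direct image under a proper convolution map of the external tensor product of $\mathcal I_{A^-}$ and $\mathcal I_{A^+}$, evaluated on the variety of triples $(\mathbf L,\mathbf L',\mathbf L'')$ with $(\mathbf L,\mathbf L'')\in \mathcal O_{A^-}$ and $(\mathbf L'',\mathbf L')\in \mathcal O_{A^+}$. A direct computation shows that the fibre of this map over a generic point of $\mathcal O_A$ is a single point, so the decomposition theorem forces the leading summand to be $\mathcal I_A$; the remaining summands are supported on strata $\mathcal O_{A'}$ with $A' < A$, yielding the triangular expansion with coefficients $c_{A,A'} \in \mathbb Z[v,v^{-1}]$.

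For claim (3), I would induct on the partial order $\leq$, which is well-founded on the set $\mathfrak S_{D,n,n}$ of fixed row and column sums. The base case consists of elements already lying in $\mathfrak S_{D,n,n}^\pm$, handled directly by (1). For the inductive step, apply $\psi_D$ to the identity of (2) and use that $\psi_D$ is an algebra homomorphism to obtain
\[
\psi_D(\{A\}) = \psi_D(\{A^-\})\,\psi_D(\{A^+\}) - \sum_{A' < A} c_{A,A'}\,\psi_D(\{A'\}).
\]
Claim (1) reduces the first product to $\{A^- - I\}\{A^+ - I\}$, and a second application of (2) (to $A-I$ in $\mathfrak S_{D-n,n,n}$) rewrites this as $\{A-I\}$ plus $\mathbb Z[v,v^{-1}]$-combinations of $\{A''\}$ with $A'' < A - I$. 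The inductive hypothesis expands each $\psi_D(\{A'\})$ into $\{A'-I\}$ plus lower terms, and collecting everything gives the required triangular formula.

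The main obstacle will be claim (1): one must verify that $\psi_D(\{A\})$ is \emph{exactly} $\{A-I\}$ with no lower-order correction, which requires controlling the IC sheaf under the stabilization morphism. The triangular hypothesis is what makes this tractable, since it forces $\bar X_A^\mathbf L$ to be modelled on a closed subvariety of an affine Grassmannian / affine flag variety whose singularities and IC stalks are compatible with the natural projection used to define $\psi_D$; without this hypothesis, extra strata would contribute and the clean identity would fail.
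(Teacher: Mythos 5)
Your treatments of parts (2) and (3) are essentially what the paper does: (2) is the convolution/decomposition-theorem argument of \cite[Proposition 4.11]{L99}, and (3) is the same induction on $<$ that the paper sketches, using (1), (2) and $(A-I)^\pm = A^\pm - I$.

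Part (1) is where your route diverges from the paper's, and where there is a genuine gap. You propose to geometrically realize $\psi_D$ as a smooth, connected-fibre morphism $\bar X_A^{\mathbf L}\to \bar X_{A-I}^{\mathbf L''}$ obtained by passing to a rank-$(D-n)$ quotient, and to argue that this map pulls $\mathcal I_{A-I}$ back to $\mathcal I_A$ up to shift. There are two problems. First, the direction is wrong: a pullback along such a $\pi$ would naturally induce a map $\mathfrak A_{D-n}\to\mathfrak A_D$ on $K$-groups, whereas $\psi_D$ goes $\mathfrak A_D\to\mathfrak A_{D-n}$; to conclude $\psi_D(\{A\})=\{A-I\}$ you would further have to show that the map you constructed is actually a one-sided inverse to $\psi_D$ on the span of triangular basis elements, which does not follow from smoothness of $\pi$. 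Second, and more seriously, the paper explicitly records (Remark~\ref{SVremark}) that $\psi_D$ does \emph{not} in general send $\mathfrak B_D$ into $\mathfrak B_{D-n}\cup\{0\}$, and that it is not clear why any geometric incarnation of $\psi_D$ should preserve simple objects. So the clean ``IC pulls back to IC'' picture you are relying on cannot hold for $\psi_D$ in the form you describe, and you have not given a reason why the restriction to $\mathfrak S_{D,n,n}^\pm$ rescues it; the appeal to ``affine Grassmannian/affine flag'' geometry is not doing any actual work in your argument. The role of the triangularity hypothesis in the paper's proof is entirely different: matrices in $\mathfrak S_{D,n,n}^\pm$ are precisely the images of canonical-basis elements $b^\pm 1_\lambda$ under $\phi_D$, and the identification $\phi_D(b^\pm 1_\lambda)=\{A\}$ is known from the quiver-variety picture of \cite[\S 5]{L99} (recalled in Section~\ref{geometric}). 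Part (1) then follows purely algebraically from the compatibility $\psi_{D+n}\circ\phi_{D+n}=\phi_D$: applying this to $b^\pm 1_\lambda$ gives $\psi_{D+n}(\{A+I\})=\{A\}$ with no geometry of $\psi_D$ needed at all. You should replace your sketch of (1) with this argument, or else supply a genuine proof that the proposed $\pi$ exists, preserves the stratification, pulls back ICs as claimed, \emph{and} inverts $\psi_D$ on the triangular subspace; none of these is established in the proposal.
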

\begin{proof}
In \cite[\S3.7]{L99a} the elements of $A \in \mathfrak S_{D,n,n}^\pm$ are related to perverse sheaves on quiver varieties attached to the cyclic quiver, giving a geometric interpretation of part of the map from $\Ud$ to $\mathfrak A_{D,n,n}$ (see also Section \ref{geometric} and \cite[\S 5]{L99} for more details). From this, and the compatibility of the maps $\phi_D$ and $\psi_D$, part $(1)$ readily follows. Part $(2)$ is \cite[Proposition 4.11]{L99}. The last part  follows by induction on the partial order $<$ using parts $(1)$ and $(2)$ together with the fact that $(A-I)^\pm = A^\pm -I$.
\end{proof}

\begin{definition}
\label{partialorder}
Next we note that the partial order $\leq$ has a combinatorial cousin $\preceq$ which we can make more explicit: Given $A,B \in \mathfrak S^{n,n}$ say $A \preceq B$ if for all $i<j \in \mathbb Z$ we have 
\[
\sum_{r \leq i; s \geq j} a_{r,s} \leq \sum_{r \leq i; s \geq j} b_{r,s},
\]
and for any $i>j$ we have
\[
\sum_{r \geq i; s \leq j} a_{r,s} \leq \sum_{r \geq i; s \leq j} b_{r,s}.
\]
It is easy to check that if $A,B \in \mathfrak S_{D,n,n}$ and $A \leq B$ then $A \preceq B$ (see for example \cite[Lemma 3.6]{BLM90} and \cite[\S 1.6]{L99}). Also, if we write $_pA = A +pI$, then it is clear that $A \preceq B$ if and only if $_pA \preceq {_pB}$. Moreover, crucially in what follows, given $A \in \mathfrak S^{n,n}$ the set 
\[
\{B \in \mathfrak S^{n,n}: B \preceq A, r(B) = r(A), c(B) = c(A)\}
\] 
is finite. 
\end{definition}

We now resolve the ambiguity of signs in the definition of $\mathbb B^{\pm}$ and extract a basis from the signed basis $\mathbb B^\pm$.

\begin{cor}
\label{signissue}
Suppose $b \in \mathbb B^{\pm}$. Then if 
\[
\mathbb B = \{b \in \mathbb B^{\pm}: \phi_D(b) \in \mathfrak B_D\cup\{0\},  \forall D \gg 0\}
\]
we have $\mathbb B^{\pm} = \mathbb B \sqcup (-\mathbb B)$. Moreover, if  $\phi_{k+pn}(b) = \{A_{k+pn}\}$ where $A_{k+pn} \in \mathfrak S_{D,n,n}$ for all $p>p_0$ say, then $A_{k+(p+1)n} = A_{k+pn} +I$.
\end{cor}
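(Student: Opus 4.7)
The plan is to combine Proposition \ref{bases} with the compatibility $\psi_{k+(p+1)n}\circ\phi_{k+(p+1)n}=\phi_{k+pn}$ and the transition formula of Lemma \ref{Lusztigresults}(3). For $b\in\mathbb B^\pm$ one may write $\phi_{k+pn}(b)=\epsilon_p\{A_{k+pn}\}$ for $p>p_0$, where $\epsilon_p\in\{\pm 1\}$ and $A_{k+pn}\in\mathfrak S_{k+pn,n,n}$. Applying $\psi_{k+(p+1)n}$ and expanding the image via Lemma \ref{Lusztigresults}(3) yields the identity
\[
\epsilon_p\{A_{k+pn}\} \;=\; \epsilon_{p+1}\{A_{k+(p+1)n}-I\} \;+\; \epsilon_{p+1}\sum_{A'<A_{k+(p+1)n}} e_{A_{k+(p+1)n},A'}\{A'-I\}
\]
in $\mathfrak A_{k+pn,\mathcal A}$.

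The main obstacle, which I expect to require the most care, is showing that for $p$ large enough the leading term $\{A_{k+(p+1)n}-I\}$ is a nonzero canonical basis element, i.e., every diagonal entry $(A_{k+(p+1)n})_{ii}\ge 1$. The key observation is that the off-diagonal sum $\sum_{i\neq j}a_{ij}$ of any matrix $A$ appearing in $\phi_D(b)$ is bounded independently of $D$: writing $b$ as a finite linear combination of monomials in the generators $E_i$, $F_i$ and inspecting Lemma \ref{multiplicationformulae}, one checks that each application of a generator changes the off-diagonal sum of the underlying matrix by at most $1$, so this total is at most the monomial degree of $b$. On the other hand, the row sums of $A_{k+pn}$ differ from $\mathbf a_p$ by a bounded shift and $\mathbf a_p$ grows componentwise linearly in $p$, so the diagonal entries $(A_{k+pn})_{ii}$ necessarily tend to infinity, which yields $(A_{k+(p+1)n})_{ii}\ge 1$ for all $i$ once $p$ is large.

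Once this is in hand, the displayed equation is a relation among distinct canonical basis vectors of $\mathfrak A_{k+pn,\mathcal A}$. Matching coefficients of $\{A_{k+pn}\}$ and invoking linear independence of $\mathfrak B_{k+pn}$ forces both $A_{k+(p+1)n}-I=A_{k+pn}$, which is the ``moreover'' assertion, and $\epsilon_{p+1}=\epsilon_p$. Hence the sign $\epsilon_p$ is eventually constant, say equal to $\epsilon(b)\in\{\pm 1\}$, so $b':=\epsilon(b)\cdot b$ satisfies $\phi_D(b')\in\mathfrak B_D\cup\{0\}$ for all $D\gg 0$; thus $b'\in\mathbb B$ and $b\in\mathbb B\cup(-\mathbb B)$.

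For the disjointness $\mathbb B\cap(-\mathbb B)=\emptyset$: if $b$ lay in both, then for every large $D$ both $\phi_D(b)$ and $-\phi_D(b)$ would lie in $\mathfrak B_D\cup\{0\}$, which is only possible when $\phi_D(b)=0$; the injectivity statement of Proposition \ref{inj} would then give $b=0$, contradicting the defining condition $(b,b)\in 1+v^{-1}\mathbf A$.
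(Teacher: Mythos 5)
Your proof follows the same core strategy as the paper: express $\phi_{k+pn}(b) = \epsilon_p\{A_{k+pn}\}$ using Proposition~\ref{bases}, apply the compatibility $\psi_{k+(p+1)n}\circ\phi_{k+(p+1)n} = \phi_{k+pn}$ together with Lemma~\ref{Lusztigresults}(3), and match coefficients in the canonical basis $\mathfrak B_{k+pn}$ to conclude both that $\epsilon_{p+1}=\epsilon_p$ and that $A_{k+(p+1)n}-I = A_{k+pn}$. The paper's own argument is exactly this but considerably terser: it simply says ``comparing these two expressions we can conclude$\ldots$''.

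Where you go beyond the paper is in explicitly checking that the leading term $\{A_{k+(p+1)n}-I\}$ does not vanish, i.e., that every diagonal entry of $A_{k+(p+1)n}$ is at least $1$ once $p$ is large. This is a point the paper elides: Lemma~\ref{Lusztigresults}(3) adopts the convention that $\{A-I\}=0$ when $A-I\notin\mathfrak S_{D,n,n}$, and without ruling this out, the coefficient comparison would not directly force $A_{k+(p+1)n}-I=A_{k+pn}$. Your growth argument --- the off-diagonal sums of any matrix in the standard-basis expansion of $\phi_D(b)$ are bounded by the monomial degree of $b$ (each application of $E_j$ or $F_j$ via Lemma~\ref{multiplicationformulae} alters the off-diagonal sum by at most $1$), while the row sums grow linearly in $p$, forcing the diagonal entries to grow --- is correct and does fill this in. Note that you tacitly also use that $A_{k+pn}$ is among the matrices coming from the monomial expansion; this holds because $\{A_{k+pn}\}=[A_{k+pn}]+\sum_{A'<A_{k+pn}}\Pi_{A',A_{k+pn}}[A']$, so $[A_{k+pn}]$ appears with coefficient $\pm 1$, and the partial order $\preceq$ is monotone in the off-diagonal sums. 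Your disjointness argument via Proposition~\ref{inj} is also fine, and is likewise left implicit in the paper.

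In short: same approach as the paper, with a worthwhile extra step that explicitly justifies the nonvanishing of the leading term. The proof is correct.
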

\begin{proof}
It is only necessary to show that $\mathbb B$ is well-defined. Suppose $b \in \mathbb B^{\pm}$. The previous Proposition shows that if $b = b1_\lambda$, and $k = \sum_{i=1}^n \lambda_i$, then for large enough $p$, say $p\geq p_0$, we have $\phi_{k+pn}(b) \in \pm \mathfrak B_{k+pn}$, and moreover $\phi_D(b) = 0$ if $D$ is not congruent to $k$ modulo $n$. Thus we have $\phi_{k+pn}(b) = \epsilon_{k+pn}\{A_{k+pn}\}$ where $\epsilon_{k+pn} \in \{\pm 1\}$ and $A_{k+pn} \in \mathfrak S_{k+pn,n,n}$ for all $p \geq p_0$. But now by part $(3)$ of Lemma \ref{Lusztigresults} we have
\[
\psi_{k+(p+1)n}(\{A_{k+(p+1)n}\}) = \{A_{k+(p+1)n}-I\} + \sum_{B \preceq A_{k+pn}} e_B\{B\}, \quad (e_B \in \mathcal A),
\]
whereas $\psi_{k+(p+1)n}(\phi_{k+(p+1)n}(b)) = \phi_{k+pn}(b) = \epsilon_{k+pn}\{A_{k+pn}\}$. Comparing these two expressions we can conclude that $\epsilon_{k+(p+1)n} = \epsilon_{k+pn}$ and $\{A_{k+(p+1)n}\} = \{A_{k+pn}\}$ as claimed. 
\end{proof}

\begin{cor}
\label{almostorthoUd}
The set $\mathbb B$ is almost orthonormal, that is
\[
(b_1,b_2) \in \delta_{b_1,b_2} + v^{-1}\mathbb Z[[v^{-1}]].
\]
Thus the set $\mathbb B$ is linearly independent.
\end{cor}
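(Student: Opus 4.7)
My plan is to translate the question about $\mathbb B$ into a question about the canonical bases $\mathfrak B_D$ using Proposition~\ref{bases} and Corollary~\ref{signissue}, exploit the almost-orthonormality of $\mathfrak B_D$ recorded in Lemma~\ref{almostortho}, and handle the off-diagonal case via the injectivity established in Proposition~\ref{inj}. Linear independence will then fall out of almost orthonormality by the standard Gram-matrix argument over $\mathbb Q((v^{-1}))$.

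Given $b_1, b_2 \in \mathbb B$, I would first invoke Proposition~\ref{bases} so that each $b_i = b_i 1_{\lambda_i}$ for a unique $\lambda_i \in X'$. If $\lambda_1 \neq \lambda_2$, then property~(1) of Theorem~\ref{Udinnerproduct}---which is immediate for $\langle,\rangle$ as noted at the start of Section~\ref{comparison}---forces $(b_1, b_2) = 0$. Otherwise I set $\lambda_1 = \lambda_2 = \lambda$ and let $k \in \{0, 1, \ldots, n-1\}$ be the residue of $\sum_i \lambda_i$ modulo $n$. Corollary~\ref{signissue} then produces matrices $A^{(i)}_{k+pn} \in \mathfrak S_{k+pn,n,n}$ with $\phi_{k+pn}(b_i) = \{A^{(i)}_{k+pn}\}$ and $A^{(i)}_{k+(p+1)n} = A^{(i)}_{k+pn} + I$ for all $p$ sufficiently large, while $\phi_D(b_i) = 0$ for $D \not\equiv k$ mod $n$. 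Combining this with Remark~\ref{strongconvergence} gives
\[
(b_1, b_2) = \lim_{p\to\infty}(\{A^{(1)}_{k+pn}\}, \{A^{(2)}_{k+pn}\})_{k+pn}
\]
as a convergent limit in $\mathbb Q((v^{-1}))$, and by Lemma~\ref{almostortho} each term lies in $\delta_{A^{(1)}, A^{(2)}} + v^{-1}\mathbb Z[v^{-1}]$, so the limit lies in $\delta_{A^{(1)}, A^{(2)}} + v^{-1}\mathbb Z[[v^{-1}]]$.

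The case $b_1 = b_2$ is immediate from this. The hard part is the off-diagonal case $b_1 \neq b_2$, where I need to rule out $A^{(1)}_{k+pn} = A^{(2)}_{k+pn}$ for all large $p$. The point is that a single coincidence at some $p = p_0$ would propagate to all $p \geq p_0$ by the stability $A^{(i)}_{k+(p+1)n} = A^{(i)}_{k+pn} + I$, and applying $\psi_{D}$ iteratively (using Lemma~\ref{Lusztigresults}(1)) would then propagate it downward to every $D \equiv k$ mod $n$; since both $\phi_D(b_i)$ also vanish for $D \not\equiv k$ mod $n$, this would yield $\phi(b_1) = \phi(b_2)$, contradicting Proposition~\ref{inj}. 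This is the only step where the real content of Section~\ref{comparison} is used, and the propagation trick is why the stability relation in Corollary~\ref{signissue} is so important here.

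To extract linear independence, I would take a finite relation $\sum_{i=1}^m c_i b_i = 0$ with distinct $b_i \in \mathbb B$ and $c_i \in \mathbb Q(v)$, and pair it with each $b_j$ to obtain a linear system whose Gram matrix $M = ((b_i, b_j))$ satisfies $M \equiv \mathrm{Id}$ modulo $v^{-1}\mathbb Q[[v^{-1}]]$ by the previous paragraphs. Then $\det M$ is a nonzero element of $1 + v^{-1}\mathbb Q[[v^{-1}]]$, so $M$ is invertible inside $\mathbb Q((v^{-1}))$ and every $c_i$ must vanish in $\mathbb Q(v)$, completing the argument.
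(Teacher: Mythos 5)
Your proof is correct and follows essentially the same route as the paper: pass to the stable elements $\{{}_pA\}$ via Corollary~\ref{signissue}, apply Lemma~\ref{almostortho} termwise, take the $p\to\infty$ limit, and deduce linear independence from the Gram matrix being congruent to the identity modulo $v^{-1}$ (the paper phrases this by self-pairing a minimal dependence with normalized leading coefficients, which is an equivalent packaging). You are right that ruling out $A^{(1)}_{k+pn}=A^{(2)}_{k+pn}$ when $b_1\neq b_2$ genuinely requires an argument --- the paper passes over this, tacitly identifying $\delta_{A,B}$ with $\delta_{b_1,b_2}$ --- and your appeal to Proposition~\ref{inj} is the right fix; the one small correction is that the downward propagation of $\phi_{k+pn}(b_1)=\phi_{k+pn}(b_2)$ to all $D$ should rest on the compatibility $\psi_{D+n}\circ\phi_{D+n}=\phi_D$ stated at the start of Section~\ref{injective}, rather than on Lemma~\ref{Lusztigresults}(1), since the matrices $A^{(i)}_{k+pn}$ need not lie in $\mathfrak S^{\pm}_{D,n,n}$.
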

\begin{proof}
Let $b_1,b_2 \in \mathbb B$. If $b_1= b_11_\lambda$  then either $b_2 = b_21_\lambda$ or $b_21_\lambda = 0$ in which case the inner product $(b_1,b_2)=0$ so the claim of the Corollary holds trivially, or $b_21_\lambda = b_2$. In that case, we see from Corollary \ref{signissue} that we may find a $p_0 \in \mathbb N$ and $A,B \in \mathfrak S^{n,n}$ so that $\phi_{p_0+pn}(b) = \{_pA\}$  and $\phi_{p_0+pn}(b_2) = \{_pB\}$ for all $p \geq 0$. But then it follows from Lemma \ref{almostortho} that
\[
(\{_pA\}, \{_pB\}) \in \delta_{A,B} + v^{-1}\mathbb Z[[v^{-1}]].
\]
for all $p$, and hence taking the limit we obtain the same result for $b_1,b_2$.
To see that this implies the linear independence of the set $\mathbb B$, consider a dependence involving the minimal number of elements of $\mathbb B$:
\[
\sum_{j=1}^k p_j b_j = 0
\]
where by clearing denominators if necessary we may assume that $p_k \in \mathbb Z[v,v^{-1}]$ (and by minimality they are all nonzero) and $b_k \in \mathbb B$. We may moreover assume, multiplying through by an appropriate power of $v$, that $p_i = n_{i} + v^{-1} \mathbb Z[v^{-1}]$, where $n_{i} \in \mathbb Z$, and, reordering if necessary, that $n_{1} \neq 0$. Pick $D$ large enough so that 
\[
\sum_{l=0}^{n-1}(\phi_D(b_r), \phi_D(b_s))_{D+l} = (b_r, b_s) \mod v^{-1}\mathbb Z[[v^{-1}]], \quad 1\leq r,s \leq k.
\]
and moreover that for each $j$ we have $\phi_D(b_j) = \{B_j\}$ for some $B_j \in \mathfrak S_{D,n,n}$. Then 
\[
\begin{split}
0 = (\sum_{r=1}^k p_r b_r, \sum_{s=1}^k p_s b_s) &= \sum_{1 \leq r,s \leq k} p_rp_s(b_r,b_s) \\
& \equiv \sum_{1 \leq r,s \leq k} p_rp_s (\{B_r\}, \{B_s\})_D \mod v^{-1}\mathbb Z[v^{-1}] \\
& \equiv \sum_{1 \leq r \leq k} n_r^2 \mod v^{-1}\mathbb Z[v^{-1}].
\end{split}
\]
which is a contradiction, since $n_1 \neq 0$.
\end{proof}

We now show that if $A \in \mathfrak S^{n,n}$ then for large enough $p$ there is a unique $b \in \mathbb B$ such that $\phi_D(b) = \{_pA\}$ (where $D=\sum_{i \in [1,n],j \in \mathbb Z} a_{ij}+pn$), and hence by Corollary \ref{signissue} it will also follows that for large enough $p$ we have $\psi_D(\{_pA\}) = \{_{p-1}A\}$. We need to recall the relation between the canonical basis $\mathbf B$ of $\mathbf U^-$ and $\mathfrak B_D$. Recall from Section \ref{geometric} that the representation theory of the cyclic quiver allows us to parametrize $\mathbf B$ by tableaux $(\mu_{t,p})_{t \in \mathbb Z,p \in \mathbb N}$, where $\mu_{t,p} \in \mathbb N$ and $\mu_{t,p} = \mu_{t-n,p}$ for all $t \in \mathbb Z$ and for fixed $t$ only finitely many of the $\mu_{t.p}$ are nonzero. The $\nu$-graded part $\mathbf B_\nu$ is then indexed by $\Sigma_\nu$.

The correspondence described in Section \ref{geometric} gives a bijection between those $(\mu_{t,p})$ in $\Sigma_\nu$ satisfying
\[
\mu_{i,1}+\mu_{i,2}+\ldots \leq a_i, \quad \forall i,
\]
and the orbits in corresponding to matrices $B \in \mathfrak S_{D,n,n}^-$ with $r(B) = \mathbf a$. 
(Note that if the integers $a_i$ are sufficiently positive, this gives an injection from $\mathbf B_\nu$ into $\mathfrak S_{D,n,n}^-$.) Using the same correspondence, composed with the transpose map $\Psi$, we obtain a similar correspondence between (appropriate subsets of) $\mathbf B_\nu$ and elements of $\mathfrak S_{D,n,n}^+$. These can be combined to give a correspondence between the set of triples $\mathcal T = \{(b_1,b_2,\lambda): b_1,b_2 \in \mathbf B, \lambda \in X\}$ and elements of $\mathfrak S^{n,n}$ as follows: the elements $b_1,b_2$ corresponds to tableau $(\mu_{t,p})$ and $(\rho_{t,p})$ say, and we define $A = A(b_1,b_2,\lambda) \in \mathcal S^{n,n}$ by setting:
\[
a_{ij} = \left\{\begin{array}{cc}\mu_{i,j-i}, & \text{ if } i<j, \\\rho_{j,i-j} & \text{ if } i>j; \\ \lambda_i - \sum_{t\geq 1} \mu_{i,t} -\sum_{s \geq 1} \rho_i & \text{ if } i=j.\end{array}\right.
\]
We will write $b_A$ for the element $b_1^+1_\lambda b_2^- \in \Ud$, and $_Db_A$ for its image under $\phi_D$. 
It follows from the \cite[\S 5]{L99} and \cite[Proposition 4.11]{L99} that if $\sum_{i=1}^n \lambda_i = D$ and the entries of $A(b_1,b_2,\lambda)$ are all non-negative, then 
\begin{equation}
\label{uptriangle}
_Db_A = \{_pA\} + \sum_{B \prec {_pA}} e_{B,{_pA}}\{B\}
\end{equation}

\begin{prop}
\label{stabilization}
Let $A \in \mathfrak S^{n,n}$. For large enough $p$ we have 
\[
\psi_D(\{_pA\}) = \{_{p-1}A\}
\]
\end{prop}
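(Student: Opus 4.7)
The plan is to prove the stability $\psi_D(\{_pA\}) = \{_{p-1}A\}$ by strong induction on the partial order $\preceq$ restricted to the finite interval $\mathcal S_A = \{B \in \mathfrak S^{n,n}: B \preceq A,\; r(B) = r(A),\; c(B) = c(A)\}$, which is finite by Definition~\ref{partialorder}. The inductive hypothesis asserts the conclusion of the proposition for every $B' \prec A$ in $\mathcal S_A$. As a preliminary structural observation, for $p$ sufficiently large the map $B' \mapsto {_pB'}$ gives a bijection between $\mathcal S_A$ and the set $\{B \in \mathfrak S_{D,n,n} : B \preceq {_pA},\; r(B) = r({_pA}),\; c(B) = c({_pA})\}$. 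Indeed, the inverse $B \mapsto B - pI$ preserves off-diagonal entries (on which the $\preceq$-inequalities depend), shifts row and column sums uniformly, lands in $\mathfrak S^{n,n}$ (which imposes non-negativity only off-diagonal), and diagonal non-negativity of ${_pB'}$ holds automatically once $p \geq \max_{B' \in \mathcal S_A,\, i}(-b'_{ii})$. Combining this with the inclusion $\leq\;\subseteq\;\preceq$ of Definition~\ref{partialorder}, the sums appearing in Equation~(\ref{uptriangle}) for ${_pA}$ are indexed by $B' \prec A$ in $\mathcal S_A$.

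Applying Equation~(\ref{uptriangle}) at both ${_pA}$ and ${_{p-1}A}$ gives
\[
\phi_D(b_A) = \{_pA\} + \sum_{B' \prec A} e_{{_pB'},{_pA}}\{_pB'\}, \qquad \phi_{D-n}(b_A) = \{_{p-1}A\} + \sum_{B' \prec A} e_{{_{p-1}B'},{_{p-1}A}}\{_{p-1}B'\}.
\]
The compatibility $\psi_D \circ \phi_D = \phi_{D-n}$ applied to $b_A \in \Ud$, combined with the inductive hypothesis $\psi_D(\{_pB'\}) = \{_{p-1}B'\}$ for each $B' \prec A$, allows one to equate the two resulting expressions for $\phi_{D-n}(b_A)$ and solve for $\psi_D(\{_pA\})$:
\[
\psi_D(\{_pA\}) = \{_{p-1}A\} + \sum_{B' \prec A}\bigl(e_{{_{p-1}B'},{_{p-1}A}} - e_{{_pB'},{_pA}}\bigr)\{_{p-1}B'\}.
\]
Thus the inductive step reduces to showing that the coefficients $e_{{_pB'},{_pA}}$ appearing in Equation~(\ref{uptriangle}) stabilize in $p$ for $p$ sufficiently large.

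The main obstacle is establishing this stability of coefficients. I would resolve it via the geometric interpretation of Section~\ref{geometric}. Under the isomorphism $\Theta$, the element $\phi_D(b_A)$ corresponds to a convolution of equivariant perverse sheaves on the periodic lattice space, and via the correspondence $\tilde{\mathcal X}^\mathbf L_{\mathbf a,\nu} \xrightarrow{\alpha,\beta} E_W^{\mathrm{nil}}$ this pulls back, for $\mathbf a = r({_pA})$ with $\min_i a_i$ large enough that $\mathcal U_\mathbf a = E_W^{\mathrm{nil}}$, from a convolution $P_1 * P_2$ of equivariant perverse sheaves on the cyclic quiver variety $E_W^{\mathrm{nil}}$, where $P_1, P_2$ are the simples corresponding to the canonical basis elements $b_1, b_2 \in \mathbf B$ defining $b_A$. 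Because the map $\beta$ becomes arbitrarily highly connected as $\min_i a_i \to \infty$ (by Lemma~\ref{fibres} and the ensuing proposition), the decomposition of $\phi_D(b_A)$ in the basis $\{\{B\}\}$ matches faithfully the decomposition of $P_1 * P_2$ into simple perverse sheaves on $E_W^{\mathrm{nil}}$. This latter decomposition is intrinsic to the cyclic quiver, depending only on $W$, $b_1$, and $b_2$, and hence is independent of the shift $p$. Consequently $e_{{_pB'},{_pA}}$ is eventually constant in $p$, the correction terms in the formula above vanish, and the induction closes.
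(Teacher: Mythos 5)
Your inductive skeleton is correct and in fact essentially equivalent to the paper's: expanding ${_Db_A}$ against $\{\{B\}\}$ via Equation~(\ref{uptriangle}), using $\psi_D\circ\phi_D=\phi_{D-n}$, and invoking the inductive hypothesis to isolate $\psi_D(\{_pA\})$ reduces the problem, exactly as you say, to showing the coefficients $e_{{_pB'},{_pA}}$ stabilize for large $p$ (which is equivalent to the paper's claim that the coefficients $d_{A',A}$ of the inverse change of basis stabilize). The gap is in your geometric argument for that stability. The element $\phi_D(b_A)=\phi_D(b_1^+)[\mathbf{i_a}]\phi_D(b_2^-)$ is a product $\{A^-\}\{A^+\}$ of two \emph{one-sided} canonical basis elements, and this product is a convolution of perverse sheaves on the periodic lattice spaces, \emph{not} on $E_W^{\text{nil}}$. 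The correspondence $\mathcal X^\mathbf L_{\mathbf a,\nu}\xleftarrow{\alpha}\tilde{\mathcal X}^\mathbf L_{\mathbf a,\nu}\xrightarrow{\beta}\mathcal U_\mathbf a$ identifies \emph{one-sided} orbit closures with cyclic quiver orbit closures; the support of $\{A^-\}\{A^+\}$ is generically a two-sided matrix, and there is no ``convolution $P_1*P_2$ on $E_W^{\text{nil}}$'' to which it pulls back. Even at the level of indexing sets the proposed identification cannot hold: the simple summands of a convolution on $E_W^{\text{nil}}$ are labelled by $G_W$-orbits (one-sided matrices), whereas the $\{B\}$ appearing in Equation~(\ref{uptriangle}) range over two-sided $B\preceq{_pA}$. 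The $k$-connectedness of $\beta$ from Section~\ref{geometric} is used in the paper to compare \emph{inner products}, not to transport decompositions of two-sided convolution products.

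The paper's proof fills this gap by a different route: it runs a Gram--Schmidt process to recover $\{_pA\}$ from ${_Db_A}$ and the lower ${_Db_{A'}}$, using the almost-orthonormality of $\mathfrak B_D$, and then observes that each step of the Gram--Schmidt only sees finitely many Laurent coefficients of the inner products $(b_{A'},b_{A''})_D$, which converge in $\mathbb Z((v^{-1}))$ as $p\to\infty$ by the results of Section~\ref{innerU}. That convergence --- established analytically earlier in the paper --- is what forces the coefficients $d_{A',A}$ (and hence the $e_{B,{_pA}}$) to become constant for $p$ large. If you want to salvage your line of argument you would need to replace the appeal to the cyclic quiver correspondence with some such quantitative control on the structure constants, because no clean geometric identification with a cyclic-quiver convolution is available for the two-sided product ${_Db_A}$.
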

\begin{proof}
Via the bijection described above between $\mathcal T$ and $\mathfrak S^{n,n}$, we may find a subset $\mathcal T_\mathbf a$ of $\mathcal T$ such that $\{A(b_1,b_2,\mathbf a)\}$  is a basis of $\mathfrak A_{D}[\mathbf i_\mathbf a]$ as $(b_1,b_2,\mathbf a)$ runs over the set $\mathcal T_\mathbf a$. Then the elements $_Db_A$ are clearly also a basis of $\mathfrak A_D[\mathbf i_\mathbf a]$ since they are related to the elements $\{A\}$ by an upper triangular matrix, and moreover they satisfy:
\begin{enumerate}
\item 
$\bar{_Db_A} = {_Db_A}$,
\item 
$_Db_A \in \mathbf U_{D,\mathcal A}$.
\end{enumerate}
As in the proof of Proposition \ref{bases}, the basis $\{ \{A\}: A \in \mathfrak S_{D,n,n}\}$ is characterized up to sign by the properties of being bar-invariant, integral (that is, contained in $\mathbf U_{D,\mathcal A}$), and being almost orthonormal, so that
\[
(\{A\},\{B\})_D \in \delta_{A,B} + v^{-1}\mathbb Z[v^{-1}].
\]
(In fact, Proposition \ref{bases} shows that less than this characterises $\pm \mathfrak B_D$). 

We  now show that one can obtain $\{A\}$ from $_Db_A$ by a Gram-Schmidt style process. Indeed if $A$ is minimal for the ordering $\preceq$, then clearly $\{A\} = b_A$. Thus we consider the following claim:
\begin{itemize}
\item
For each $A \in \mathfrak S^{n,n}$, there is a $p_0 \in \mathbb Z$ such that for all $p>p_0$ we have 
\[
\{_pA\} = {_Db_A} + \sum_{A'<A} d_{A',A}{_Db_{A'}}.
\]
where $d_{A',A} \in \mathcal A$ do not depend on $p$, and $D = pn+ \sum_{i,j: 1\leq i \leq n} a_{ij}$.
\end{itemize}

Note that the proof of the proposition now follows immediately since $\psi_D(_Db_A) = _{D-n}b_A$. We show this by induction on $\preceq$, since if $A$ is minimal, then Equation (\ref{uptriangle}) we see that $b_A = \{A\}$, and we are done. Thus suppose that the result is known for all $B\prec A$, and let $I$ be the (finite) set 
\[
\{B \in \mathfrak S^{n,n}:  B\preceq A, r(A) = r(B), c(A)= c(B)\}.
\]
Now for $x$ in the span of $\{\{B\}: B \in I\}$, set $N(x) = \text{max}\{\nu(x,\{B\})_D: B \in I, B \neq A\}$, where for $f \in \mathcal A$ we let $\nu(f)$ denote the highest power of $v$ occuring in $f$. Let $N = N(b_A)$, and suppose that $N \geq 0$. Let $J$ denotes the subset of $I$ for which $\nu(b_A,\{B\})=N$, so that if $B \in J$ we have 
\[
(b_A,\{B\})_D = c_Bv^N+ \text{ lower order terms}, \quad (c_B \in \mathbb Z).
\]
Now $(\{B\},\{B\})_D \in 1 + v^{-1}\mathbb Z[v^{-1}]$, so we may recursively solve for $a_B \in v^{-N}\mathbb Z[v]\cap \mathbb Z[v^{-1}]$ such that $a_B.(\{B\},\{B\})_D \in 1+ v^{-N-1}\mathbb Z[v^{-1}]$. It follows immediately that we may find $e_B \in \mathcal A$ such that $\bar{e}_B = e_B$ and $e_B = c_Bv^Na_B \text{ mod } v^{-1}\mathbb Z[v^{-1}]$. Then we set
\[
b'_A = b_A -\sum_{B \in J}e_B\{B\}.
\]
It follows from the almost orthonormality of the $\{B\}$ that $(b'_A,\{B\})_D \in v^{N-1}\mathbb Z[v^{-1}]$, and $b'_A$ is again bar-invariant, lies in $\mathbf U_{D,\mathcal A}$, and satisfies $N(b'_A)<N$. We may thus iterate this construction to obtain an element $b''_A$ which has $N(b''_A) \leq -1$, is bar-invariant, and lies in $\mathbf U_{D,\mathcal A}$. But then we claim that we must have $b''_A = \{A\}$. Indeed we know (using Equation (\ref{uptriangle})) that we can write 
\[
b''_A= \{A\} + \sum_{B \prec A} f_B\{B\}
\]
for some $f_B \in \mathcal A$ with $\bar{f}_B = f_B$. If it is not the case that $f_B =0$ for all $B$, then there is some $B$ with $\nu(f_B) \geq 0$ maximal, whence we see that $\nu(b''_A,\{B\})_D \geq 0$ which is a contradiction. Thus $b''_A = \{A\}$ as required.

Now examining the above process, we see that it uses only the values of $(b_A,\{B\})_D$ down to order to $v^{-N(b_A)}\mathbb Z[v^{-1}]$, and by induction we see that these, for large enough $p$ are determined by the values of $(b_A,b_B)_D$, down to some possibly lower order (determined by the coefficients $d_{B,C}$). Since the values of $(b_A,b_B)_D$ converge in $\mathbb Z((v^{-1}))$ we see that we may find a large enough $p_0$ so that $\{_pA\}$ is a linear combination of $\{\phi_D(b_{A'}): A' \preceq A\}$ with coefficients independent of $p$ as required.
\end{proof}

We can now show that the set $\mathbb B$ is a basis of $\dot{\mathbf U}$.

\begin{theorem}
$\mathbb B$ is a basis of $\Ud$. 
\end{theorem}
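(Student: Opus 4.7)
The plan is to combine linear independence of $\mathbb{B}$, already established in Corollary~\ref{almostorthoUd}, with a spanning argument extracted from Proposition~\ref{stabilization}. Concretely, I will produce, for each $A\in\mathfrak{S}^{n,n}$, an explicit element $\tilde{b}_A\in\mathbb{B}$ whose image under $\phi_D$ is $\{{}_pA\}$ for all $p$ sufficiently large, and then show that the collection $\{\tilde{b}_A : A\in\mathfrak{S}^{n,n}\}$ already spans $\dot{\mathbf{U}}$, which combined with the linear independence of $\mathbb{B}$ will finish the proof.

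For the construction I will use not the headline of Proposition~\ref{stabilization}, but the stronger bullet-point identity proved in its course: for $p$ large enough,
\[
\{{}_pA\} = {_Db_A} + \sum_{A'\prec A} d_{A',A}\,{_Db_{A'}},
\]
with coefficients $d_{A',A}\in\mathcal{A}$ \emph{independent of $p$} and with the sum finite thanks to the finiteness of $\preceq$-intervals with fixed row and column sums noted in Definition~\ref{partialorder}. This identity therefore lifts to $\dot{\mathbf{U}}_\mathcal{A}$: setting
\[
\tilde{b}_A = b_A + \sum_{A'\prec A} d_{A',A}\, b_{A'} \in \dot{\mathbf{U}}_\mathcal{A},
\]
we get a well-defined finite sum with $\phi_D(\tilde{b}_A) = \{{}_pA\}$ for all large $p$ (where $D$ is the appropriate residue of type $k+pn$), and $\phi_D(\tilde{b}_A)=0$ for $D$ of the wrong residue class. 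The converse direction of Proposition~\ref{bases} then forces $\tilde{b}_A\in\mathbb{B}^{\pm}$, and because the sign is $+$ for all large $p$ (the canonical basis element $\{{}_pA\}$, not its negative), we have $\tilde{b}_A\in\mathbb{B}$.

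It then remains to check that $\{\tilde{b}_A : A\in\mathfrak{S}^{n,n}\}$ spans $\dot{\mathbf{U}}$. Via the bijection between triples $(b_1,b_2,\lambda)$ and matrices in $\mathfrak{S}^{n,n}$ recalled just before Equation~\ref{uptriangle}, the elements $\{b_A\}$ themselves already span $\dot{\mathbf{U}}$: this is simply the triangular decomposition packaged through the cyclic quiver parametrization of $\mathbf{B}$. The relation $\tilde{b}_A = b_A + \sum_{A'\prec A} d_{A',A}\, b_{A'}$ is unitriangular with respect to $\preceq$, and once again the finiteness of $\preceq$-intervals with fixed row and column sums permits inversion by induction on $\preceq$: each $b_A$ is a finite $\mathbb{Q}(v)$-linear combination of $\tilde{b}_{A'}$'s with $A'\preceq A$. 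Hence $\{\tilde{b}_A\}\subseteq\mathbb{B}$ spans $\dot{\mathbf{U}}$, and together with Corollary~\ref{almostorthoUd} this yields that $\mathbb{B}$ is a basis (and, as a bonus, that $\mathbb{B}=\{\tilde{b}_A : A\in\mathfrak{S}^{n,n}\}$).

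The main obstacle I expect is purely organizational rather than deep: ensuring the sum defining $\tilde{b}_A$ is genuinely finite in $\dot{\mathbf{U}}$, and that the inversion of the triangular system is well-defined. Both hinge on the same combinatorial input, namely the finiteness of $\preceq$-intervals with fixed row and column sums on $\mathfrak{S}^{n,n}$, which has already been isolated in Definition~\ref{partialorder}; once this is invoked, the argument is a straightforward triangularity exercise built on the material proved earlier in the section.
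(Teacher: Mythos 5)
Your proposal is correct and follows essentially the same route as the paper: linear independence from Corollary~\ref{almostorthoUd}, then spanning via the bullet-point identity in the proof of Proposition~\ref{stabilization}, which produces elements of $\mathbb{B}$ unitriangular (with leading coefficient $1$) in the basis $\{b_A\}$, so that the change-of-basis matrix is invertible. You have simply made explicit what the paper leaves terse — naming the elements $\tilde{b}_A$, noting the $p$-independence and finiteness that make them well-defined in $\dot{\mathbf{U}}_{\mathcal{A}}$, and spelling out the triangular inversion — but the underlying argument coincides with the paper's.
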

\begin{proof}
Notice first that given any $b \in \mathbb B$, Proposition \ref{bases} implies that $\phi_D(b) \in \mathfrak B_D\cup \{0\}$ for large enough $D$, and is nonzero provided $D$ has a fixed residue modulo $n$. 
By Corollary \ref{almostorthoUd} we know that the elements of $\mathbb B$ are linearly independent, so we need only show that they span $\Ud$. To do this it is enough to show that the element $b_1^+b_2^-1_\lambda$ for $b_1,b_2 \in \mathbf B$ and $\lambda \in X$ lie in the span of $\mathbb B$, since they form a basis for $\Ud$. But the claim in the proof of Proposition \ref{stabilization} shows that we may find an element of $\mathbb B$ which is a linear combination of such basis elements with leading coefficent $1$, so that the matrix relating the two sets is invertible and $\mathbb B$ indeed spans $\Ud$.

\end{proof}

\begin{remark}
\label{SVremark}
The results of \cite[\S 26.3]{L93} then show that $\mathbb B = \dot{\mathbf B}$, and thus the results of this section give a new proof of the conjecture made in \cite[\S 9.3]{L99}, which was originally proved in \cite{ScV}. Our goal here was to construct the canonical basis purely within the context of the inverse system $\mathbf U_D$, thus unlike \cite{ScV}, we do not need to assume the existence of $\dot{\mathbf B}$, nor use any properties of crystal bases. It should be noted however that by using results of Kashiwara on global crystal bases, \cite{ScV} have obtained a more precise result (also conjectured by Lusztig \cite{L99a}) which shows that the maps $\phi_D$ are all compatible with the canonical basis, \textit{i.e.} if $b \in \dot{\mathbf B}$ then $\phi_D(b) \in \mathfrak B_D \cup \{0\}$, and moreover the kernel of $\phi_D$ is spanned by a subset of $\dot{\mathbf B}$. The results of this section show that this theorem would also follow if we could show that the maps  $\psi_D$ are compatible with the bases $\mathbf B_D$ and $\mathbf B_{D-n}$, a question which can be phrased purely geometrically (in terms of perverse sheaves). Note that it is \textit{not} true that the maps $\psi_D$ send $\mathfrak B_D$ to $\mathfrak B_{D-n} \cup \{0\}$, as was  pointed out already in \cite[1.12]{L99a}. It is possible to give a construction of the maps $\psi_D$ in the context of perverse sheaves on the ind-varieties $\mathcal F_{\mathbf a}$, (\textit{i.e.} to show that their exists a functor on the derived category which preserves perverse sheaves (up to shift) and induces $\psi_D$ on the Grothendieck group which moreover is compatible with the ''convolution'' on $\mathcal K_{D,n,n}$) but it is not immediately clear why this functor preserves simple objects.
\end{remark}

\section{A Postitivity Result}

We may combine Theorem~\ref{same} and Proposition~\ref{same2} to prove a positivity result for the inner product of two elements of $\dot{\mathbf B}$. This has been conjectured by Lusztig for all types.

\begin{theorem}
Let $b_1, b_2 \in \dot{\mathbf B}$.
\[
(b_1, b_2) \in \mathbb N[[v^{-1}]]\cap\mathbb Q(v).
\]
\label{pos}
\end{theorem}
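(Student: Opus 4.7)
The plan is to combine the limit description of the inner product on $\dot{\mathbf U}$ (Theorem~\ref{same}) with the geometric interpretation of the inner product on $\mathfrak A_D$ (Proposition~\ref{same2}), obtain positivity at each finite stage from the fact that cohomology dimensions are non-negative integers, and then pass to the limit.

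First I would fix $b_1,b_2 \in \dot{\mathbf B}$. By Remark~\ref{SVremark} we have $\dot{\mathbf B}= \mathbb B$, so Corollary~\ref{signissue} applies: there is a residue class $k \in \{0,1,\dots,n-1\}$ and a $p_0 \in \mathbb N$ such that for all $p \geq p_0$ and $D = k+pn$ we have $\phi_D(b_1)=\{A_p\}$ and $\phi_D(b_2)=\{B_p\}$ for canonical basis elements $\{A_p\},\{B_p\} \in \mathfrak B_D$, while $\phi_{D'}(b_i) = 0$ for $D' \not\equiv k \pmod n$. Then by Theorem~\ref{same},
\[
(b_1,b_2) = \langle b_1,b_2\rangle = \lim_{p \to \infty}(\{A_p\},\{B_p\})_D
\]
where the convergence takes place in $\mathbb Q((v^{-1}))$, and the limit is a priori an element of $\mathbb Q(v)$.

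Next I would use Proposition~\ref{same2}: since $\phi_D(b_i) \in \mathbf U_D$, the inner products $(\{A_p\},\{B_p\})_D$ agree with the geometric inner product $\langle \mathcal I_{A_p},\mathcal I_{B_p}\rangle^D$. By the definition in Equation~(\ref{perversedef}),
\[
(\{A_p\},\{B_p\})_D = \sum_{i \in \mathbb Z}\dim H^i_c\bigl(\mathcal F_{r(A_p)}, \mathcal I_{A_p^t}^{\mathbf L'} \otimes \mathcal I_{B_p^t}^{\mathbf L'}\bigr) v^i,
\]
which manifestly has non-negative integer coefficients as a Laurent polynomial in $v$. Combining this with the almost-orthonormality statement in Lemma~\ref{almostortho}, which forces $(\{A_p\},\{B_p\})_D \in \delta_{A_p,B_p} + v^{-1}\mathbb Z[v^{-1}]$, we conclude that each term of the sequence lies in $\mathbb N[v^{-1}]$.

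Finally, since convergence in $\mathbb Q((v^{-1}))$ is coefficient-by-coefficient and $\mathbb N$ is closed under coefficient-wise limits, the limit of any sequence in $\mathbb N[v^{-1}]$ lies in $\mathbb N[[v^{-1}]]$; this gives $(b_1,b_2) \in \mathbb N[[v^{-1}]] \cap \mathbb Q(v)$, as required. The conceptually delicate step is really the identification $\dot{\mathbf B} = \mathbb B$ together with the compatibility between $\phi_D$ and canonical bases recorded in Corollary~\ref{signissue}, which is what guarantees that both $\phi_D(b_1)$ and $\phi_D(b_2)$ are \emph{genuine} (unsigned) elements of the canonical basis $\mathfrak B_D$ of $\mathfrak A_D$ rather than merely signed ones; any sign ambiguity would destroy the positivity at the finite stage. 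Once this is in hand, the remaining work is routine.
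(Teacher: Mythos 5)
Your proof is correct and follows essentially the same route as the paper's: express $(b_1,b_2)$ as the limit of the finite-stage inner products $(\phi_D(b_1),\phi_D(b_2))_D$, use the identification with the geometric inner product (Proposition~\ref{same2} together with Equation~(\ref{perversedef})) to get non-negativity of coefficients at each stage, use Lemma~\ref{values2} to bound degrees so each term lies in $\mathbb N[v^{-1}]$, and pass to the limit. If anything you are slightly more careful than the printed proof in one spot: the paper simply cites Proposition~\ref{bases} (which is stated for the signed set $\mathbb B^{\pm}$ and only gives $\phi_D(b)\in\pm\mathfrak B_D$), whereas you correctly note that one needs $\dot{\mathbf B}=\mathbb B$ (Remark~\ref{SVremark}) and Corollary~\ref{signissue} to rule out the minus sign, which matters for positivity.
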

\begin{proof}
We may assume that there is a $\lambda \in X$ such that $b_11_\lambda=
b_1$, and $b_21_\lambda= b_2$.
Let $k \in \{0,1,\dotsc,n-1\}$ be such that
$\sum_{j=1}^n\lambda_j=k$ mod $n$. Then
\[
(b_1, b_2)=\lim_{p \to \infty}(\phi_{k+pn}(b_1),\phi_{k+pn}(b_2))_{k+pn}
\]
By Proposition \ref{bases} we know that for all large enough $D$ we have $\phi_D(b_1), \phi_D(b_2)$
are in $\mathfrak B_D$, hence it is clear from Equation (\ref{perversedef}) that
\[
(\phi_{k+pn}(b_1),\phi_{k+pn}(b_2))_{k+pn} \in
\mathbb N[v,v^{-1}].
\]
However, it follows also from Lemma~\ref{values2} that the
left-hand side is in fact in $\mathbb N[v^{-1}]$ (this can also be seen directly,
using the definition of intersection cohomology sheaves). Hence $(b_1, b_2)$ is the limit of
elements of $\mathbb N[v^{-1}]$, and the statement follows.
\end{proof}

\begin{remark}
All the results of this paper have analogues for the nonaffine case, which can be proved in exactly the same way. The module $V$ is replaced by a $D$-dimensional vector space over $\mathbf k$, and the space $\mathcal F^n$ of n-step periodic lattices should be replaced by the space of n-step flags in that vector space. In this case  the algebra corresponding to $\mathbf U_D$ is actually equal to the algebra analogous to $\mathfrak A_D$, hence the results are sometimes more straightforward.
\end{remark}

\noindent
\textit{Acknowledgements}: This paper is based on a chapter of my thesis which was written under the direction of George Lusztig. I would like to thank him both for posing the problem that led to this paper, and for our many conversations about the contents of this paper and much else besides. I would also like to thank Jared Tanner for a useful conversation.


\begin{thebibliography}{BLM}

\bibitem[BLM] {BLM90} A. Beilinson, G. Lusztig, R. MacPherson, \textit{A geometric setting for the quantum deformation of $\rm{GL_n}$}, Duke Math. J. \textbf{61}(1990), 655--677.

\bibitem[BL]{BL} J. Bernstein, V. Lunts, \textit{Equivariant sheaves and functors}, Lecture Notes in Mathematics, 1578. Springer-Verlag, Berlin, 1994. iv+139 pp.

\bibitem[GV]{GV93} V. Ginzburg, E. Vasserot, \textit{Langlands reciprocity for affine quantum groups of type $A_n$},
 Int. Math. Res. Notices, \textbf{3}(1993), 67--85.

\bibitem[GL] {GL93} I. Grojnowski, G. Lusztig, \textit{A comparison of bases of quantized enveloping algebras}
Contemporary Mathematics, \textbf{153}, 11--19.

\bibitem[K] {K91} M. Kashiwara, \textit{On crystal bases of the q-analogue of universal enveloping algebras}, Duke Math. J. \textbf{63}(1991), 465--516.

\bibitem[KS]{KS} M. Kashiwara, P. Schapira, \textit{Sheaves on manifolds}. Grundlehren der Mathematischen Wissenschaften, 292. Springer-Verlag, Berlin, 1994. x+512 pp.

\bibitem[LW]{LW54} S. Lang, A. Weil, \textit{Number of points of varieties in finite fields},
 Amer. J. Math. \textbf{76}(1954), 819--827.

\bibitem[L91] {L91} G. Lusztig, \textit{Quivers, perverse sheaves and quantized enveloping algebras},
J. Amer. Math. Soc. \textbf{4} (1991), 365--421.

\bibitem[L92]{L92} G. Lusztig, \textit{Canonical bases in tensor products}, Proc. Nat. Acad. Sci. USA,
 \textbf{89}(1992), 8177--8179.

\bibitem[L93] {L93} G. Lusztig, \textit{Introduction to quantum groups}, Birkh\"{a}user, Boston, 1993.

\bibitem[L99] {L99} G. Lusztig, \textit{Aperiodicity in quantum affine $\mathfrak{gl}_n$},
 Asian J. Math, \textbf{3}(1999), 147--177.

\bibitem[L99a] {L99a} G. Lusztig \textit{Transfer maps for quantum affine $\mathfrak{sl}_n$},
 Representations and quantizations (Shanghai, 1998), 341--356, China High. Educ. Press, Beijing, 2000.

\bibitem[PS]{PS} A. Pressley, G. Segal, \textit{Loop Groups}, Oxford Mathematical Monographs. Oxford Science Publications. The Clarendon Press, Oxford University Press, New York, 1986. viii+318 pp.

\bibitem[SV] {ScV} O. Schiffmann, E. Vasserot, \textit{Geomteric construction of the global base of the
 modified quantum algebra of $\widehat{\mathfrak{gl}}_N$}, Transformation Groups \textbf{5}(2000), no. 4, 351--60.

\end{thebibliography}
\end{document}